\newtheorem{thrm}{Theorem}[section]
\newtheorem{lem}[thrm]{Lemma}
\newtheorem{cor}[thrm]{Corollary}
\newtheorem{prop}[thrm]{Proposition}
\theoremstyle{definition}
\newtheorem{defn}[thrm]{Definition}
\newtheorem{exmple}[thrm]{Example}
\newtheorem{rmk}[thrm]{Remark}
\newtheorem{ques}[thrm]{Question}
\newtheorem{constr}[thrm]{Construction}
\begin{document}

\newcommand{\vol}{\mathrm{vol}}

\newcommand{\Supp}{\mathrm{Supp}}

\newcommand{\Sing}{\mathrm{Sing}}

\newcommand{\ord}{\mathrm{ord}}

\newcommand{\mult}{\mathrm{mult}}

\newcommand{\Val}{\mathrm{Val}}

\title{On Eckl's Pseudo-effective Reduction Map}
\author{Brian Lehmann}
\thanks{This material is based upon work supported under a National Science
Foundation Graduate Research Fellowship.}
\address{Department of Mathematics, Rice University \\
Houston, TX \, \, 77005}
\email{blehmann@rice.edu}

\begin{abstract}
Suppose that $X$ is a complex projective variety and $L$ is a pseudo-effective divisor.  A numerical reduction map is a quotient of $X$ by all subvarieties along which $L$ is numerically trivial.  We construct two variants: the $L$-trivial reduction map and the pseudo-effective reduction map of \cite{eckl05}.  We show that these maps capture interesting geometric properties of $L$ and use them to analyze abundant divisors.
\end{abstract}

\maketitle

\section{Introduction}

Let $X$ be a complex projective variety and let $L$ be a pseudo-effective divisor on $X$.  One fruitful way to study the geometry of $L$ is to analyze maps defined by sections of multiples of $L$.  Our goal is to develop a similar theory of maps that reflect numerical properties of $L$.

The theory of reduction maps, introduced by Tsuji and \cite{8authors}, provides a systematic way of defining such morphisms.  A reduction map is a quotient of $X$ by all subvarieties $V$ for which $L|_{V}$ is ``trivial'' in some sense.  One hopes that $L$ is then ``trivial'' along the fibers of the resulting map, so that we have constructed a universal quotient with respect to this property.

The first numerical reduction maps were developed by \cite{8authors} which worked with nef divisors.  \cite{bdpp04} and \cite{eckl05} considered how to generalize their work to arbitrary pseudo-effective divisors.  However, there are some subtle issues in their arguments.  We extend and correct their ideas (using different techniques) to define two maps: the $L$-trivial reduction map and the pseudo-effective reduction map of \cite{eckl05}.

\subsection{$L$-trivial reduction map} 

The $L$-trivial reduction map is constructed by taking a quotient of $X$ by subvarieties $V$ satisfying $L|_{V} \equiv 0$.   As suggested by \cite{bdpp04}, its key property is that for a general fiber $F$ the numerical dimension of $L|_{F}$ vanishes.  Recall that the numerical dimension $\nu(L)$ is a numerical measure of the positivity of $L$ (see Definition \ref{numdimdefn}).

\begin{thrm} \label{ltrivialreduction}
Let $X$ be a normal projective variety and let $L$ be a pseudo-effective divisor on $X$.
There is a birational model $\phi: Y \to X$ and a surjective morphism $\pi: Y \to Z$ with connected fibers satisfying
\begin{enumerate}
\item For any curve $C$ on $Y$ passing through a very general point with $\dim(\pi(C)) > 0$ we have $\phi^{*}L \cdot C > 0$.
\item For a general fiber $F$ of $\pi$ we have $\nu(\phi^{*}L|_{F}) = 0$.
\item The pair $(Y,\pi)$ is the minimal quotient satisfying (1): if $\widehat{\phi}: \widehat{Y} \to X$
is a birational map and $\widehat{\pi}: \widehat{Y} \to \widehat{Z}$ is a surjective morphism with connected fibers satisfying (1), then there is a dominant rational map $\psi: Z \dashrightarrow \widehat{Z}$ such that $\widehat{\pi}$ is birationally equivalent to $\psi \circ \pi$.
\end{enumerate}
The pair $(Y,\pi)$ is determined up to birational equivalence and only depends on the numerical class of $L$.  The induced map $\pi \circ \phi^{-1}: X \dashrightarrow Z$ is called the \emph{$L$-trivial reduction map}.
\end{thrm}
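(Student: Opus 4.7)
\medskip
\noindent\textbf{Proof proposal.} The plan is to construct $(Y,\pi)$ as a maximal rational quotient of $X$ by chains of subvarieties on which $L$ is numerically trivial, to deduce property (1) from the maximality of this quotient, to upgrade the fiberwise conclusion from ``trivial on each covering family'' to $\nu(\phi^*L|_F)=0$ by induction on dimension, and finally to verify minimality and numerical invariance.

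For the construction, let $\mathcal{F}$ denote the countable collection of irreducible covering families of subvarieties $V\subset X$ satisfying $L|_V\equiv 0$. I would apply a Campana-style covering-family quotient theorem to $\mathcal{F}$ to obtain a dominant rational map $X\dashrightarrow Z$ whose general fibers are the equivalence classes generated by members of $\mathcal{F}$; resolving indeterminacies and taking a Stein factorization produces $\phi:Y\to X$ and $\pi:Y\to Z$ with connected fibers. Property (1) is then immediate from maximality: a curve $C$ through a very general point with $\dim\pi(C)>0$ and $\phi^*L\cdot C=0$ would itself generate a family that should already be absorbed by $\mathcal{F}$, linking distinct fibers and contradicting the construction.

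The main obstacle is property (2). What the construction guarantees directly is only that every covering family of $L$-trivial subvarieties through a general point lies in a fiber; this is strictly weaker than $\nu(\phi^*L|_F)=0$, and this is the gap that the introduction identifies in the prior arguments. I would proceed by induction on $\dim X$. Suppose for a general fiber $F$ that $\nu(\phi^*L|_F)>0$. The inductive hypothesis applied on $F$ yields a nontrivial $L|_F$-trivial reduction map for $F$; using a flattening of $\pi$ and the semicontinuous behavior of the numerical dimension in families, one expects these fiberwise sub-reductions to glue to a single algebraic family on $Y$ that strictly refines $\pi$ and whose general sub-fibers remain connected by chains of $L$-trivial subvarieties. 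This refinement contradicts the maximality of $\pi$ from the first step. The algebraic gluing and the semicontinuity input are the delicate points and will likely absorb most of the technical work; the base case $\dim X=1$ is trivial.

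For the minimality property (3), let $(\widehat Y,\widehat\pi)$ be any other pair satisfying (1) and pass to a common resolution of $Y$ and $\widehat Y$. Every $V\in\mathcal{F}$ through a very general point is covered by curves $C$ with $\phi^*L\cdot C=0$; property (1) applied to $\widehat\pi$ forces each such $C$, and hence $V$ itself, to be contracted by $\widehat\pi$. Since general fibers of $\pi$ are connected by chains of such $V$, the map $\widehat\pi$ is constant along them, which yields the desired factorization $\psi:Z\dashrightarrow\widehat Z$. Uniqueness up to birational equivalence and dependence only on the numerical class of $L$ follow because the defining collection $\mathcal{F}$ is numerical in nature.
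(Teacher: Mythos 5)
Your overall architecture is sound and parallels the paper's: both take a generic (Campana-style open) quotient by $L$-trivial covering families rather than the proper quotient, so that Example 8.9 of \cite{bdpp04} is avoided, and both deduce property (1) from the construction, property (3) from the universal property of the generic quotient (Proposition \ref{genericquotientdominance}), and the numerical and birational invariance from the invariance of the defining collection of families. You also correctly locate property (2) as the genuinely difficult step: the construction only hands you relative numerical triviality of $L$ along the families, which is much weaker than $\nu(\phi^{*}L|_{F})=0$.

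However, your treatment of property (2) has a gap that cannot be repaired by the gluing you sketch. When $\dim Z>0$ and $F$ is a general fiber, the restricted families $U_{i}'\times_{Y}F$ are flat over $F$ and their members connect a dense subset of $F$ (since $F$ is the closure of an equivalence class). Because they are flat, the generic quotient of $F$ by these families coincides with the proper quotient, which is the constant map. Adding more $L|_{F}$-trivial families can only lower the dimension of the base, so the $L|_{F}$-trivial reduction of $F$ is in fact \emph{trivial}, not nontrivial as you assert. In this regime your inductive hypothesis applied to $(F,L|_{F})$ (permissible since $\dim F<\dim X$) already gives $\nu(L|_{F})=0$ directly via property (2) of the lower-dimensional reduction map; there is no refinement, nothing to glue, and no contradiction to extract. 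The real difficulty has simply been pushed to the remaining case $\dim Z=0$, where $F=Y$ has the same dimension as $X$ and the inductive hypothesis is unavailable. That base case is exactly Proposition \ref{bdpptriviality}: \emph{if the generic quotient of $X$ by $L$-trivial movable curves is constant, then $\nu(L)=0$}. This is where the substance lives. In the paper it is proved by a chain-of-closures induction (not an induction on $\dim X$) that expands from a very general point to all of $X$, and each step invokes Theorem \ref{relativetriviality}, whose proof requires the divisorial Zariski decomposition on surfaces (Lemma \ref{surfacelemma}), the relative positivity of $P_{\sigma}$ (Corollary \ref{psigmarelativepositivity}), the models computing the restricted numerical dimension (Proposition \ref{mapscomputingnumdim}), and the fiber/multisection criterion of \cite{8authors}. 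Your proposal contains none of this machinery and supplies no substitute; the appeal to ``semicontinuity of $\nu$ in families'' and an algebraic gluing of fiberwise reduction maps is not a standing theorem and in any case does not touch the constant-quotient case. So the proof is incomplete precisely at the point the theorem exists to settle.
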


\begin{rmk}
Early versions of \cite{bdpp04} stated an incorrect variant of Theorem \ref{ltrivialreduction}.  As demonstrated by \cite{bdpp04} Example 8.9, it is not possible to ensure that $\pi \circ \phi: X \dashrightarrow Z$ is almost proper.
\end{rmk}

An important subtlety is that the proper quotient of \cite{campana81} is not the right choice for constructing the map.  \cite{bdpp04} Example 8.9 shows that the proper quotient of $X$ by subvarieties $V$ satisfying $L|_{V} \equiv 0$ may not reflect the birational invariants of $L$.  Instead, one must take a generic quotient in the sense of Section \ref{genericquotientmapssection}.  

\subsection{Pseudo-effective reduction map}
The pseudo-effective reduction map is more natural from the viewpoint of birational geometry.  It satisfies a slightly weaker form of triviality along the general fiber, but it is the maximal quotient with this property.  The pseudo-effective reduction map was first constructed in \cite{eckl05} using analytic techniques; we will give an algebraic formulation clarifying some of the subtleties in the arguments of \cite{eckl05}.

Recall that for a pseudo-effective divisor $L$, the positive part $P_{\sigma}(L)$ is defined by subtracting the ``fixed part'' of $L$ supported on the restricted base locus of $L$.

\begin{thrm}[cf.~\cite{eckl05}, Proposition 4.2] \label{psefreduction}
Let $X$ be a normal projective variety and let $L$ be a pseudo-effective $\mathbb{R}$-divisor on $X$.  There is a birational model $\phi: Y \to X$ and a surjective morphism $\pi: Y \to Z$ with connected fibers satisfying
\begin{enumerate}
\item For a general fiber $F$ of $\pi$ we have $P_{\sigma}(\phi^{*}L)|_{F} \equiv 0$.
\item The pair $(Y,\pi)$ is the maximal quotient satisfying (1): if $\phi': Y' \to X$ is a birational map
and $\pi': Y' \to Z'$ is a surjective morphism with connected fibers satisfying (1), then there
is a dominant rational map $\psi: Z' \dashrightarrow Z$ such that $\pi$ is birationally equivalent to $\psi \circ \pi'$.
\end{enumerate}
The pair $(Y,\pi)$ is determined up to birational equivalence and depends only on the numerical class of $L$.  The induced map $\pi \circ \phi^{-1}: X \dashrightarrow Z$ is called the pseudo-effective reduction map associated to $L$.
\end{thrm}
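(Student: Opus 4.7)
The plan is to mimic the structure of Theorem \ref{ltrivialreduction} but with the condition ``$L|_V \equiv 0$'' replaced by ``$P_\sigma(L)|_V \equiv 0$,'' and then to realize the pseudo-effective reduction map as a generic quotient with respect to the resulting family of subvarieties. Concretely, I would pass to a sufficiently high birational model $\phi: Y \to X$, chosen so that the $\sigma$-decomposition of $\phi^*L$ is well behaved, and consider the collection $\mathcal{F}$ of positive-dimensional subvarieties $V \subset Y$ through a very general point for which a suitable restriction of $P_\sigma(\phi^*L)$ to $V$ is numerically trivial. The generic quotient machinery of Section \ref{genericquotientmapssection} then produces, after a further modification, a surjective morphism $\pi: Y \to Z$ with connected fibers whose very general fibers are precisely the equivalence classes generated by chains of members of $\mathcal{F}$.

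To verify (1), I would argue that a general fiber $F$ of $\pi$ is covered by members of $\mathcal{F}$, so that $P_\sigma(\phi^*L)$ vanishes on a covering family of subvarieties of $F$; promoting this to $P_\sigma(\phi^*L)|_F \equiv 0$ uses compatibility of the $\sigma$-decomposition with restriction to a general fiber and the fact that numerical triviality can be tested on a covering family of curves. For (2), given another morphism $\pi': Y' \to Z'$ whose general fiber $F'$ satisfies $P_\sigma(\phi'{}^*L)|_{F'} \equiv 0$, I would pass to a common birational model where $F'$ becomes itself a member of $\mathcal{F}$, so that it lies inside a single equivalence class and hence in a single fiber of $\pi$; this produces the required factorization $\psi: Z' \dashrightarrow Z$. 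Uniqueness of $(Y,\pi)$ up to birational equivalence is then formal from the universal property (2), and the numerical invariance follows because $P_\sigma$, and therefore the family $\mathcal{F}$, depends only on the numerical class of $L$.

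The principal obstacle is the definition and birational behaviour of $P_\sigma(\phi^*L)|_V$. The positive part does not restrict functorially to subvarieties; the negative part $N_\sigma$ need not remain divisorial after pullback, and the restricted base locus is only stable up to numerically trivial error under birational modifications. The careful choice of model $Y$, and the algebraic reformulation of the $\sigma$-decomposition that replaces the analytic currents used in \cite{eckl05}, is where the real work lies; this is the point at which Eckl's argument must be clarified. Once this compatibility with restriction is in hand, both the generic quotient construction and the maximality argument become essentially formal.
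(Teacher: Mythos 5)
Your architecture is right (generic quotient, then universal property), and you have correctly identified the crux: the condition ``$P_{\sigma}(\phi^{*}L)|_{V}\equiv 0$'' is not birationally well-posed, because the positive part does not commute with restriction and changes under pullback. But your proposal stops at naming this obstacle and promises that ``once this compatibility with restriction is in hand \ldots the argument becomes essentially formal.'' That is exactly the step that has to be supplied, and the paper resolves it in a specific way that your outline lacks. The defining condition the paper uses is \emph{not} numerical triviality of a restricted positive part on a fixed model, but vanishing of the restricted numerical dimension $\nu_{X|C}(L)=0$ for movable curves $C$ --- a quantity that is birationally invariant by construction (Theorem \ref{restnumdimproperties}), so there is no model-dependence issue in the definition. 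The bridge back to your intuition is Theorem \ref{stablevanishingintersection}: for a very general movable curve, $\nu_{X|C}(L)=0$ holds if and only if $P_{\sigma}(\phi^{*}L)\cdot C'=0$ on \emph{some} sufficiently high model $\phi$. This is the precise sense in which the quotient is ``by curves where $P_{\sigma}$ is trivial,'' without ever needing $P_{\sigma}$ to restrict functorially.

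The second ingredient missing from your plan is the structural reduction that makes property (1) provable. Because only finitely many families enter the generic quotient, the paper picks one model $\psi:\widetilde{X}\to X$ on which all of the defining curve families have $P_{\sigma}(\psi^{*}L)\cdot C=0$, and then observes (Corollary \ref{psefreductionisltrivial}) that the pseudo-effective reduction map for $L$ coincides with the $P_{\sigma}(\psi^{*}L)$-trivial reduction map. Property (1) then follows from the already-established Theorem \ref{ltrivialreduction}(2), giving $\nu(P_{\sigma}(\phi^{*}L)|_{F})=0$, together with the observation that the restriction of $P_{\sigma}(\phi^{*}L)$ to a very general fiber has no divisorial negative part (so $\nu=0$ upgrades to numerical triviality by Theorem \ref{numdimproperties}(3)), and then Theorem \ref{relativebdpp} to propagate this from very general to general fibers. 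Your sketch of (1), which proposes to ``cover $F$ by members of $\mathcal{F}$ and promote to $P_{\sigma}(\phi^{*}L)|_{F}\equiv 0$,'' would in effect need to reprove the chaining argument behind Theorem \ref{ltrivialreduction}(2) (Proposition \ref{bdpptriviality} and Theorem \ref{relativetriviality}) from scratch. Your argument for (2) does go through once the defining condition is taken to be $\nu_{X|C}=0$: a curve in a general fiber of a competitor $\pi'$ automatically has $\nu_{X|C}(L)=0$ by Theorem \ref{restnumdimproperties}, so the generic quotient dominates it. In short: the obstacle you flagged is genuine, and the paper's answer is to change the defining invariant to one that is birationally stable from the outset ($\nu_{X|C}$), prove Theorem \ref{stablevanishingintersection} as the translation lemma, and then piggyback on Theorem \ref{ltrivialreduction}; none of these three moves appears in your outline.
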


The pseudo-effective reduction map satisfies the following properties.
\begin{itemize}
\item If $\kappa(L) \geq 0$ then the Iitaka fibration for $L$ factors birationally through the pseudo-effective reduction map.  Thus, the pseudo-effective reduction map identifies a ``numerical piece'' of the Iitaka fibration that is easier to understand.
\item The pseudo-effective reduction map is compatible with birational transformations and the divisorial Zariski decomposition.
\item If $\kappa(L) \geq 0$, there is a morphism $g: W \to T$ birationally equivalent to the pseudo-effective reduction map and a divisor $D$ on $T$ that captures all the interesting geometry of $L$.  In particular there is some integer $m$ so that there is a natural identification of section rings $R(T,mD) = R(X,mL)$.
\end{itemize}

As with other reduction maps, the pseudo-effective reduction map has a nice description in terms of the curves that it contracts.  Given a subvariety $V$ of $X$, one can define a restricted numerical dimension $\nu_{X|V}(L)$ measuring the positivity of $L$ along $V$.

\begin{thrm} \label{explicitpsefreduction}
Let $X$ be a normal projective variety and let $L$ be a pseudo-effective $\mathbb{R}$-divisor on $X$.  The pseudo-effective reduction map for $L$ is the generic quotient of $X$ by all curves $C$ satisfying $\nu_{X|C}(L) = 0$.
\end{thrm}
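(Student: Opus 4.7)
The plan is to establish the birational equivalence of the pseudo-effective reduction map $\pi \circ \phi^{-1} : X \dashrightarrow Z$ of Theorem \ref{psefreduction} and the generic quotient $q : X \dashrightarrow W$ by the family of curves $\{ C : \nu_{X|C}(L) = 0 \}$ by matching general fibers in both directions.

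For the containment of $q$-fibers in fibers of $\pi \circ \phi^{-1}$, I would resolve $q$ to a morphism $q' : Y' \to W$ on a birational model $\mu : Y' \to X$ and verify that $q'$ satisfies property (1) of Theorem \ref{psefreduction}.  By the construction of the generic quotient (Section \ref{genericquotientmapssection}), a general fiber $G$ of $q'$ is swept out by strict transforms $\widetilde{C}$ of curves $C \subset X$ with $\nu_{X|C}(L) = 0$.  Translating this restricted numerical dimension condition into the intersection-theoretic vanishing $P_{\sigma}(\mu^{*}L) \cdot \widetilde{C} = 0$ and appealing to birational invariance, one sees that $P_{\sigma}(\mu^{*}L)|_{G}$ pairs to zero with a covering family of curves on $G$.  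Assuming one can show that this restriction remains pseudo-effective, the covering-family test forces $P_{\sigma}(\mu^{*}L)|_{G} \equiv 0$, and then the maximality clause of Theorem \ref{psefreduction}(2) produces a factorization of $q'$ through $\pi$.

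For the reverse containment, I would take a general fiber $F$ of $\pi$ on $Y$, arranged so that $\phi$ is an isomorphism in a neighborhood of $F$ by passing to a common resolution.  Property (1) of Theorem \ref{psefreduction} gives $P_{\sigma}(\phi^{*}L)|_{F} \equiv 0$, hence $P_{\sigma}(\phi^{*}L) \cdot C = 0$ for every curve $C \subset F$ through a very general point.  Unwinding the definition of restricted numerical dimension yields $\nu_{X|\phi(C)}(L) = 0$, so by the defining universal property of the generic quotient each $\phi(C)$ is contracted by $q$.  Since $F$ is connected and covered by such curves, $\phi(F)$ lies inside a single fiber of $q$, which combined with the previous step yields the birational equivalence.

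The main obstacle is the pair of intersection-theoretic translations underpinning both directions: first, the equivalence between the restricted numerical dimension condition $\nu_{X|C}(L) = 0$ and the intersection vanishing $P_{\sigma}(L) \cdot C = 0$, which should fall out of the asymptotic definition of $\nu_{X|C}$ combined with the negativity of the divisorial Zariski decomposition; and second, the deduction of $P_{\sigma}(\mu^{*}L)|_{G} \equiv 0$ from its vanishing against a covering family, which requires that the restriction of the positive part to a general $G$ be itself pseudo-effective.  This second point is the technical crux: it is a genuine restriction theorem for $P_{\sigma}$ and is not formal, since $P_{\sigma}$ is only movable rather than nef.  Securing these two ingredients reduces the theorem to the universal property of the generic quotient and the maximality statement already built into Theorem \ref{psefreduction}.
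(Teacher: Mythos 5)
Your overall architecture is reasonable, but the argument as proposed has a genuine gap at exactly the point you flag as the ``technical crux,'' and you have misdiagnosed what the crux actually is.

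You claim that after translating $\nu_{X|C}(L) = 0$ into $P_{\sigma}(\mu^{*}L) \cdot \widetilde{C} = 0$ for the family sweeping out a general fiber $G$ of $q'$, the remaining obstacle is to show that $P_{\sigma}(\mu^{*}L)|_{G}$ is pseudo-effective, after which a ``covering-family test'' forces $P_{\sigma}(\mu^{*}L)|_{G} \equiv 0$.  But there is no such test: a pseudo-effective divisor --- even one with $N_{\sigma}=0$ --- can pair to zero against a connecting family of movable curves without being numerically trivial.  The paper's own Example \ref{bdppexample} exhibits exactly this pathology: the divisor $L$ there has $\kappa(L)=1$, hence $P_{\sigma}(L) \not\equiv 0$, yet there is a connecting family of movable curves $C$ with $L \cdot C = 0$.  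What distinguishes the generic quotient from the proper quotient is the restriction to irreducible members and the open/flat version of Theorem \ref{openquotienttheorem}, and extracting numerical triviality of the fiber from this requires the substantial content of Theorem \ref{relativetriviality} and Proposition \ref{bdpptriviality}: one inducts along chains of curves in the equivalence class, cutting by surfaces and using the structure of the Zariski decomposition on those surfaces (Lemma \ref{surfacelemma}, Corollary \ref{psigmarelativepositivity}).  The pseudo-effectiveness of the restriction is, in comparison, the easy half (it follows from Theorem \ref{relativebdpp}(1) or from the movability of $P_{\sigma}$); the real work is the step you take for granted.  Note also that Theorem \ref{stablevanishingintersection} gives, for each family, only a model $\phi:Y\to X$ depending on that family, and applies only to a very general member; your argument must address how a single model $\mu$ is found on which the vanishing holds for all of the finitely many families defining the quotient, as the paper does explicitly.

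For context: in the paper, Theorem \ref{explicitpsefreduction} is not an independent statement to be proved against a separately constructed map.  The pseudo-effective reduction map is \emph{defined} in the proof of Theorem \ref{psefreduction} as the generic quotient by movable curves with $\nu_{X|C}(L)=0$, and Theorem \ref{explicitpsefreduction} simply records this.  The hard work is showing that this generic quotient satisfies property (1), which the paper does by first replacing $L$ by $P_{\sigma}(\psi^{*}L)$ on a sufficiently high model (Corollary \ref{psefreductionisltrivial}) and then invoking Theorem \ref{ltrivialreduction}, whose proof is where Theorem \ref{relativetriviality} enters.  Your proposed proof would have to replicate that chain of results.  (A small additional slip: the maximality clause of Theorem \ref{psefreduction}(2) applied to a map $q'$ satisfying (1) yields that $\pi$ factors birationally through $q'$, not the other way around; your phrasing inverts the direction, though the intended conclusion is clear from context.)
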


\begin{rmk}
Many steps in the construction of these numerical reduction maps work in arbitrary characteristic.  The main obstacle to working over uncountable fields of characteristic $p$ is that the numerical dimension has not been studied in this context.
\end{rmk}

\subsection{Abundant divisors}
The pseudo-effective reduction map is most useful for studying divisors whose numerical properties closely reflect the asymptotic behavior of sections.  

\begin{defn}
Let $X$ be a normal variety and let $L$ be a pseudo-effective divisor.  We say that $L$ is abundant if $\kappa(L) = \nu(L)$.
\end{defn}

Using the pseudo-effective reduction map we will show that abundance is equivalent to several other geometric notions.  In particular we generalize the well-known result for nef divisors in \cite{kawamata85}.

\begin{thrm}
Let $X$ be a normal variety and let $L$ be a pseudo-effective divisor.  The following are equivalent:
\begin{enumerate}
\item $L$ is abundant.
\item There is a birational map $\mu: W \to X$ and a morphism $g: W \to T$ such that $P_{\sigma}(\mu^{*}L) \sim_{\mathbb{Q}} P_{\sigma}(g^{*}B)$ for some big divisor $B$ on $T$.
\end{enumerate}
\end{thrm}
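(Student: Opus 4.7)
The plan is to prove the two directions separately: $(2) \Rightarrow (1)$ is a formal computation, while $(1) \Rightarrow (2)$ uses the pseudo-effective reduction map of Theorem~\ref{psefreduction} together with the section-ring identification listed as the third bullet point following that theorem.

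For $(2) \Rightarrow (1)$, I would chain together the facts that $\kappa$ and $\nu$ are birational invariants, are unchanged under passage to the positive part $P_\sigma$, and are preserved by pullback via $g$:
\[
\kappa(L) \;=\; \kappa(P_\sigma(\mu^{*}L)) \;=\; \kappa(P_\sigma(g^{*}B)) \;=\; \kappa(g^{*}B) \;=\; \kappa(B) \;=\; \dim T,
\]
with the last equality coming from bigness of $B$. The identical chain applied to $\nu$, combined with the fact that $g^{*}B$ is numerically trivial along general fibers of $g$ (so $\nu(g^{*}B) = \dim T$), gives $\nu(L) = \dim T$, so $L$ is abundant.

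For $(1) \Rightarrow (2)$, I would take $g \colon W \to T$ to be a model of the pseudo-effective reduction map of $L$ and $\mu \colon W \to X$ the accompanying birational map. The first task is to verify $\dim T = \kappa(L) = \nu(L)$: the fiber triviality $P_\sigma(\mu^{*}L)|_{F} \equiv 0$ from Theorem~\ref{psefreduction}(1) combined with a restriction argument gives $\nu(L) \leq \dim T$, while the factorization of the Iitaka fibration through $g$ (the first bullet after Theorem~\ref{psefreduction}) yields $\dim T \geq \kappa(L)$; the abundance hypothesis then forces equality throughout. The third bullet following Theorem~\ref{psefreduction} now furnishes a divisor $D$ on $T$ and an integer $m$ with $R(T, mD) = R(X, mL)$, and $D$ is big because $\kappa(D) = \kappa(L) = \dim T$.

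The remaining step is to upgrade this section-ring identification to $P_\sigma(\mu^{*}L) \sim_{\mathbb{Q}} P_\sigma(g^{*}D)$. After possibly refining the birational models, the naturality of the identification $R(T,mD) = R(X,mL)$ provides a $\mathbb{Q}$-linear equivalence $m\mu^{*}L \sim_{\mathbb{Q}} mg^{*}D + E$ with $E$ effective, and shows that for every $k$ the fixed part of $|km\mu^{*}L|$ decomposes as $g^{*}F_{k} + kE$, where $F_{k}$ is the fixed part of $|kmD|$ on $T$. Dividing by $km$ and passing to the limit $k \to \infty$ then yields $N_\sigma(\mu^{*}L) = g^{*}N_\sigma(D) + E/m$, from which $P_\sigma(\mu^{*}L) \sim_{\mathbb{Q}} g^{*}P_\sigma(D) \sim_{\mathbb{Q}} P_\sigma(g^{*}D)$ follows via Nakayama's compatibility of $P_\sigma$ with pullback by surjective morphisms. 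The main obstacle is executing this asymptotic step cleanly: one must check that the section-ring identification passes compatibly through all multiples and to the comparison of fixed-part decompositions, which ultimately rests on the maximality of the pseudo-effective reduction map from Theorem~\ref{psefreduction}(2), as any horizontal $\sigma$-contribution on $W$ not coming from $T$ would contradict that maximality.
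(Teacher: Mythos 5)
The direction $(2) \Rightarrow (1)$ of your proposal is essentially correct and matches the paper's: the chain of equalities for $\kappa$ holds, and the bound $\nu(g^{*}B) \leq \nu(g^{*}B|_{F}) + \dim T = \dim T$ (Theorem \ref{numdimproperties}(6)) together with $\kappa(g^{*}B) = \dim T$ forces $\nu(g^{*}B) = \dim T$.

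The direction $(1) \Rightarrow (2)$ has two genuine gaps. First, your argument that $\dim T = \kappa(L)$ does not go through: the two inequalities you cite, $\nu(L) \leq \dim T$ (restriction to fibers of $g$) and $\kappa(L) \leq \dim T$ (Iitaka fibration factors through $g$), both point the \emph{same} way, so together with abundance they only give $\kappa(L) = \nu(L) \leq \dim T$. You never establish the reverse inequality $\dim T = p(L) \leq \kappa(L)$, and this is where the real content lies. In the paper this is the implication $(1) \Rightarrow (2) \Rightarrow (3)$ of Theorem \ref{abundantequivalence}, and the key step is a geometric argument: using $\overline{NM}_{1}$ and Theorem \ref{relativebdpp} one shows that abundance forces $\nu(\mu^{*}L|_{F}) = 0$ along fibers of the Iitaka fibration, which makes the Iitaka fibration a candidate for the maximality in Theorem \ref{psefreduction}(2), whence $p(L) \leq \kappa(L)$. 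That argument cannot be skipped.

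Second, your proposed upgrade from the section-ring identification $R(T,mD) = R(X,mL)$ to $P_{\sigma}(\mu^{*}L) \sim_{\mathbb{Q}} P_{\sigma}(g^{*}D)$ rests on two false premises. For a general pseudo-effective divisor, $N_{\sigma}$ is \emph{not} the limit of the normalized fixed parts of $|mL|$: by definition $\sigma_{\Gamma}(L)$ is computed by perturbation $L + \epsilon A$, and by Nakayama III.1.8 the asymptotic order $v(\Vert L \Vert)$ can strictly exceed $\sigma_{v}(L)$ (the coincidence for abundant divisors is exactly Proposition \ref{limitofvaluations}, which is downstream of the theorem you are trying to prove). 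And there is no ``Nakayama compatibility of $P_{\sigma}$ with pullback by surjective morphisms'' that gives $P_{\sigma}(g^{*}D) = g^{*}P_{\sigma}(D)$ when $g$ has positive-dimensional fibers; $N_{\sigma}(g^{*}D)$ can acquire vertical components not coming from $T$. The paper avoids all of this by directly invoking Nakayama's Theorem V.2.26 (stated as Theorem \ref{relativesigmadimension0}), which takes the relative $\kappa = \nu = 0$ hypothesis on the fibers and produces the $P_{\sigma}$-comparison at a stroke. Note also that the ``third bullet'' corollary you invoke is itself proved by that same theorem, so the correct route is to cite V.2.26 directly rather than to re-derive it from one of its consequences.
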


Using this criterion, we will show that the numerical invariants of an abundant divisor $L$ are closely related to the asymptotic behavior of the spaces $H^{0}(X,\mathcal{O}_{X}(mL))$ as $m$ increases.  Furthermore, abundant divisors satisfy vanishing theorems similar to those for big divisors.

I would like to thank my advisor J. M\textsuperscript{c}Kernan for his advice and support and Y.~Gongyo for some helpful conversations.

\section{Background} \label{backgroundsection}

All schemes will be of finite type and lie over the base field $\mathbb{C}$.  A variety denotes an irreducible reduced scheme.   All varieties are assumed to be normal and projective unless otherwise qualified.

\subsection{Definitions}

The term ``divisor'' will always refer to an $\mathbb{R}$-Cartier $\mathbb{R}$-Weil divisor.  We use the notations $\sim$, $\sim_{\mathbb{Q}}$, $\sim_{\mathbb{R}}$, and $\equiv$ to denote respectively linear equivalence, $\mathbb{Q}$-linear equivalence, $\mathbb{R}$-linear equivalence, and numerical equivalence of $\mathbb{R}$-divisors.  The $\mathbb{R}$-stable base locus of a divisor $L$ is the Zariski-closed set
\begin{equation*}
\mathbf{B}_{\mathbb{R}}(L) := \bigcap \{ \; \Supp(C) \; | \; C \geq 0 \; \; \textrm{and} \; \; C \sim_{\mathbb{R}} L \}.
\end{equation*}
By convention when $L$ is not $\mathbb{R}$-linearly equivalent to an effective divisor we set $\mathbf{B}_{\mathbb{R}}(L) = X$.  Since we will be interested in perturbations by an ample divisor $A$, we will usually work with a perturbed variation of the $\mathbb{R}$-stable base locus:

\begin{defn}
Let $X$ be a normal variety and let $L$ be a pseudo-effective divisor.  The \emph{restricted base locus} is defined to be
\begin{equation*}
\mathbf{B}_{-}(L) := \bigcup_{A \textrm{ ample}} \mathbf{B}_{\mathbb{R}}(L + A).
\end{equation*}
\end{defn}

It is shown in \cite{nakayama04} that the restricted base locus is a countable union of Zariski-closed subsets of $X$ that only depends on the numerical class of $L$.

We will often construct maps that are only unique up to birational modifications, which we codify using the following terminology.

\begin{defn}
Suppose that $\pi: X \dashrightarrow Z$ and $\pi': X' \dashrightarrow Z'$ are two dominant rational maps of varieties.  We say that $\pi$ and $\pi'$ are \emph{birationally equivalent} if there are birational maps $\phi: X \dashrightarrow X'$ and $\mu: Z \dashrightarrow Z'$ such that
$\mu \circ \pi = \pi' \circ \phi$ on an open subset of $X$.

Suppose that $f: X \dashrightarrow Z$ and $g: X \dashrightarrow Z'$ are two dominant rational maps.  We say that \emph{$f$ factors birationally through $g$} if there is a dominant rational map $\psi: Z' \dashrightarrow Z$ such that $f = \psi \circ g$.
\end{defn}

It will frequently be useful to find a particularly nice representative of a birational equivalence class of maps.  One option is presented by the following special case of Raynaud's flattening theorem.

\begin{thrm} [\cite{raynaud72}, Ch. 4 Theorem 1] \label{hironakaflattening}
Let $f: X \to Z$ be a surjective morphism of normal varieties.  There is a normal variety $T$ and a birational morphism $\psi: T \to Z$ satisfying the following: let $W$ denote the unique (not necessarily normal) component of $X \times_{Z} T$ that dominates $T$ under the projection map.  Then the induced morphism $g: W \to T$ is flat.  Furthermore we may ensure that $\psi$ is an isomorphism on the locus where $f$ is flat.
\end{thrm}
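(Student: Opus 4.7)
Since this result is attributed to Raynaud and will not be reproven in the paper, I would sketch the standard Hilbert-scheme approach.

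First, by generic flatness and the openness of the flat locus, there is a dense open $U \subseteq Z$ over which $f$ is flat. Since $Z$ is irreducible, $U$ is connected, and in a flat family the Hilbert polynomial is locally constant; after fixing a projective embedding of $X$, the fibers $\{f^{-1}(z)\}_{z \in U}$ thus share a common Hilbert polynomial $P$. The universal property of $\mathrm{Hilb}^{P}(X)$ then produces a classifying morphism $\tau: U \to H := \mathrm{Hilb}^{P}(X)$ with $\tau(z) = [f^{-1}(z) \subseteq X]$, viewed as a rational map $Z \dashrightarrow H$.

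Next, I would resolve the indeterminacy of $\tau$: let $\Gamma \subseteq Z \times H$ be the scheme-theoretic closure of the graph of $\tau|_{U}$, and let $T$ be the normalization of $\Gamma$. Since $H$ is projective, the first projection gives a proper birational morphism $\psi: T \to Z$ which is an isomorphism over $U$, and hence over the entire flat locus of $f$. Pulling back the universal subscheme $\mathcal{W} \subseteq X \times H$ along the composite $T \to \Gamma \to H$ produces a closed subscheme $W' \subseteq X \times T$ which is flat over $T$ by construction.

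The remaining step is to identify $W'$ with the main component of $X \times_{Z} T$. Over $\psi^{-1}(U) \subseteq T$ the two closed subschemes agree, since the universal subscheme at $\tau(z)$ is tautologically $f^{-1}(z) \subseteq X$; hence $W' \subseteq X \times_{Z} T$ as a closed subscheme (both are closed in $X \times T$ and the containment holds over a dense open subset). Because $W' \to T$ is flat and surjective, $W'$ coincides with the unique component of $X \times_{Z} T$ dominating $T$, which is the main component $W$. I expect this identification to be the main obstacle: one must compare scheme structures carefully at points where $f$ fails to be flat in order to know that the Hilbert-scheme construction really recovers the strict transform. (In Raynaud--Gruson this is handled in far greater generality via the platification theorem, where the analogous flattening is obtained by a sequence of admissible blow-ups rather than by a single Hilbert-scheme resolution.)
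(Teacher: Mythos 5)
The paper does not actually prove this theorem; it is cited directly from Raynaud, so there is no internal argument to compare against. Your Hilbert-scheme sketch is the Hironaka-style flattening argument, which is perfectly adequate in the projective situation the paper always works in (Raynaud--Gruson's platification, by contrast, works with no projectivity hypotheses and proceeds by a sequence of admissible blow-ups, which is why it is the reference of choice for the general statement). The gap you flag at the end --- identifying the pullback $W'$ of the universal family with the main component $W$ of $X\times_Z T$ --- does go through, and the missing idea is an associated-points argument. Since $W'\to T$ is flat and $T$ is integral, every associated point of $W'$ lies over the generic point of $T$; hence $W'$ is the scheme-theoretic closure in $X\times T$ of its restriction to the dense open $\psi^{-1}(U)$. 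That restriction is $f^{-1}(U)$, which is integral because it is a non-empty open in the variety $X$, so $W'$ has a single associated point and is itself integral. On the other hand, the closure of $f^{-1}(U)$ in $X\times T$ lands inside $X\times_Z T$ and is, with its reduced structure, precisely the unique component of $X\times_Z T$ dominating $T$. Hence $W'=W$ and $W$ is flat over $T$. Note that the same irreducibility of $f^{-1}(U)$ is also what guarantees that $X\times_Z T$ has only one component dominating $T$, a point your write-up uses but does not state.
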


We will always apply this theorem in conjunction with a normalization $\nu: W^{\nu} \to W$ to remain in the category of normal varieties.

Curves will play an essential role in the construction of the pseudo-effective reduction map.  Since
we are primarily interested in what happens near a general point, we will need the following notion.

\begin{defn}
Let $X$ be a variety and let $C$ be an irreducible reduced curve on $X$.  We say that $C$ is \emph{movable} if it is a member of a family of curves dominating $X$.
\end{defn}

Let $\overline{NE}^{1}(X) \subset N^{1}(X)$ denote the closure of the cone generated by classes of effective divisors and $\overline{NM}_{1}(X) \subset N_{1}(X)$ denote the closure of the cone generated by classes of movable curves.  The following theorem of \cite{bdpp04} indicates the fundamental role that movable curves play in birational geometry.

\begin{thrm}[\cite{bdpp04}, Theorem 1.5] \label{bdppmovablecurvetheorem}
Let $X$ be a normal projective variety.  Then $\overline{NM}_{1}(X)$ and $\overline{NE}^{1}(X)$ are dual cones.
\end{thrm}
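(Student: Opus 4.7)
The plan is to prove the duality by establishing the two inclusions $\overline{NM}_{1}(X) \subseteq (\overline{NE}^{1}(X))^{\vee}$ and $(\overline{NE}^{1}(X))^{\vee} \subseteq \overline{NM}_{1}(X)$ separately. The first inclusion is essentially formal, while the second is the substantive content of the theorem.

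For the easy inclusion, I would argue as follows. Let $C$ be an irreducible movable curve, so that $C$ is a member of a family $\{C_{t}\}$ whose total space dominates $X$. Given any effective divisor $D$, a very general member $C_{t}$ is not contained in $\Supp(D)$, so $D \cdot C_{t} \geq 0$; since the intersection pairing is a numerical invariant, $D \cdot C = D \cdot C_{t} \geq 0$. Extending by bilinearity to the cones generated by these classes and then by continuity to closures shows that every class in $\overline{NM}_{1}(X)$ pairs non-negatively against every class in $\overline{NE}^{1}(X)$, which is the desired inclusion.

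For the reverse inclusion, the natural approach is convex separation. Suppose $\alpha \in N_{1}(X)$ does not lie in $\overline{NM}_{1}(X)$. Because $\overline{NM}_{1}(X)$ is a closed convex cone in the finite-dimensional real vector space $N_{1}(X)$, Hahn-Banach separation produces a class $\beta \in N^{1}(X)$ with $\beta \cdot \gamma \geq 0$ for every $\gamma \in \overline{NM}_{1}(X)$ and $\beta \cdot \alpha < 0$. The theorem then reduces to the following dual characterization of pseudo-effectivity: any $\beta \in N^{1}(X)$ that is non-negative on $\overline{NM}_{1}(X)$ must lie in $\overline{NE}^{1}(X)$. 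Granting this characterization, $\beta$ is pseudo-effective and certifies that $\alpha \notin (\overline{NE}^{1}(X))^{\vee}$, completing the argument.

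The main obstacle is this dual characterization of pseudo-effectivity, and it is the technical heart of \cite{bdpp04}. One would argue the contrapositive: if $\beta$ is not pseudo-effective, then there exists a movable curve class $\gamma$ with $\beta \cdot \gamma < 0$. The certifying $\gamma$ is not visible algebraically; rather, it is constructed analytically via a positive intersection (or ``movable intersection'') product. Concretely, for an ample class $A$ and small $\epsilon > 0$ one studies positive currents representing $\epsilon A - \beta$, applies Demailly's regularization to produce a decreasing sequence of almost-nef approximations on successively higher birational models, takes a top-degree self-intersection of a complementary ample class paired with these approximations, and shows that the resulting class is represented in the limit by strict transforms of complete intersection curves---hence by a movable class $\gamma$. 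A volume-theoretic computation then shows $\beta \cdot \gamma < 0$, contradicting the hypothesis on $\beta$. I would expect this analytic construction of the certifying movable curve, together with the verification that the resulting intersection product pairs negatively with $\beta$, to be the principal difficulty in any reproduction of the proof; all the convex-geometric scaffolding above is formal once this step is in hand.
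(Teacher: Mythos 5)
The paper does not prove this statement; it is imported as background directly from \cite{bdpp04}, with \cite{lazarsfeld04} Theorem 11.4.19 cited for the extension from smooth to normal $X$. There is therefore no internal proof to compare against, and I will assess your sketch on its own terms as a reconstruction of the BDPP argument.

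Your outline correctly identifies the two halves: the formal inclusion (a very general member of a dominating family of curves misses the support of any given effective divisor, and members of a flat family over an irreducible base are numerically equivalent, so the pairing is non-negative on generators and hence on closures) and, via Hahn--Banach, the reduction to the dual characterization of pseudo-effectivity, which is the real content and is indeed proved analytically via the positive (moving) intersection product on successively higher modifications. However, two things in your description of the hard half should be corrected. First, the class one regularizes is not $\epsilon A - \beta$: if $\beta$ is not pseudo-effective one lets $t_{0} > 0$ be the threshold at which $\beta + tA$ enters the pseudo-effective cone and works with $\beta + tA$ for $t$ slightly above $t_{0}$; the certifying movable class is the positive self-intersection $\gamma_{t} = \langle (\beta + tA)^{n-1} \rangle$. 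Second, the step you compress into ``a volume-theoretic computation'' is precisely the orthogonality estimate of \cite{bdpp04}: for a Fujita-type approximate Zariski decomposition $\mu^{*}(\beta + tA) \approx D_{t} + E_{t}$ on a modification (with $D_{t}$ nef and $E_{t}$ effective), the cross term $D_{t}^{\,n-1} \cdot E_{t}$ is controlled by the defect $\vol(\beta + tA) - D_{t}^{\,n}$. It is this estimate that forces $\gamma_{t} \cdot \beta < 0$ as $t \downarrow t_{0}$, and it is the genuine analytic heart of the proof; leaving it unnamed is the main gap in your sketch.
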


Although the proof in \cite{bdpp04} assumes that $X$ is smooth, the theorem still holds true in this generality; see \cite{lazarsfeld04} Theorem 11.4.19.

\begin{defn}
Let $f: X \to Z$ be a morphism of normal varieties.  A curve $C$ on $X$ is said to be \emph{$f$-vertical} if $f^{*}H \cdot C = 0$ for some ample divisor $H$ on $Z$.
\end{defn}

\cite{peternell08} proves a relative version of Theorem \ref{bdppmovablecurvetheorem}.

\begin{thrm}[\cite{peternell08}, 6.8 Theorem] \label{relativebdpp}
Let $X$ be a smooth variety and let $f: X \to Z$ be a morphism to a normal variety $Z$.  Let $\overline{NM}_{1}(X/Z)$ denote the closed subcone of $\overline{NM}_{1}(X)$ consisting of curve classes $\alpha$ such that $f^{*}H \cdot \alpha = 0$ for an ample divisor $H$ on $Z$.  Then
\begin{enumerate}
\item If $L$ is a pseudo-effective divisor on $X$, then the restriction $L|_{F}$ is pseudo-effective for a general fiber $F$ of $f$.
\item For a general fiber $F$ of $f$, the injection $i: F \to X$ induces a surjective map $i_{*}: \overline{NM}_{1}(F) \to \overline{NM}_{1}(X/Z)$.
\end{enumerate}
In particular, $\overline{NM}_{1}(X/Z)$ is the closure of the cone generated by $f$-vertical movable curves.
\end{thrm}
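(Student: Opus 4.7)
The plan is to prove (1) by pushing forward movable curves from $F$ and applying Theorem \ref{bdppmovablecurvetheorem} twice, and to prove (2) surjectivity via a cone-duality argument that reduces everything to a relative positivity claim for divisors restricting to pseudo-effective on general fibers.

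For (1), let $F$ be a general fiber of $f$ and let $C \subset F$ be a movable curve, belonging to a family $\{C_t\}$ dominating $F$. Since $F$ is general, the fibers of $f$ sweep out $X$, so combining this with $\{C_t\}$ on each fiber gives a family of curves on $X$ whose total space dominates $X$; thus $i_*[C] \in \overline{NM}_1(X)$. Applying Theorem \ref{bdppmovablecurvetheorem} to the pseudo-effective divisor $L$ yields $L|_F \cdot C = L \cdot i_*[C] \ge 0$. Since this holds for every movable $C$ on $F$, a second application of Theorem \ref{bdppmovablecurvetheorem} on $F$ shows $L|_F \in \overline{NE}^1(F)$.

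For (2), the inclusion $i_*\overline{NM}_1(F) \subseteq \overline{NM}_1(X/Z)$ is immediate from (1) combined with $f^*H \cdot i_*[C] = 0$. To upgrade to equality, I would dualize. By Theorem \ref{bdppmovablecurvetheorem} on $X$, the dual cone of $\overline{NM}_1(X/Z) = \overline{NM}_1(X) \cap (f^*H)^{\perp}$ in $N^1(X)$ is $\overline{NE}^1(X) + \mathbb{R} \cdot f^*H$; by Theorem \ref{bdppmovablecurvetheorem} on $F$, the dual of $\overline{i_* \overline{NM}_1(F)}$ is $\{D \in N^1(X) : D|_F \in \overline{NE}^1(F)\}$. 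Equality of the two curve cones is thus equivalent to the following \emph{key claim}: if $D|_F$ is pseudo-effective on a general fiber $F$, then $D + N f^*H$ is pseudo-effective on $X$ for some $N > 0$. The reverse inclusion $\overline{NE}^1(X) + \mathbb{R} f^*H \subseteq \{D : D|_F \in \overline{NE}^1(F)\}$ is already provided by (1).

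The key claim is the main obstacle. After flattening $f$ via Theorem \ref{hironakaflattening} and replacing $D$ by $D + \epsilon A$ for an ample $A$ on $X$, it suffices to show that if $D|_F$ is \emph{big}, then $D + \lambda f^*H$ is big for any $\lambda > 0$; letting $\epsilon \to 0$ then yields the pseudo-effective case by closedness. The projection formula gives
\[
f_* \mathcal{O}_X(mD + k f^*H) \cong f_* \mathcal{O}_X(mD) \otimes \mathcal{O}_Z(kH),
\]
and the coherent sheaf $f_* \mathcal{O}_X(mD)$ has generic rank $h^0(F, mD|_F) \sim \vol(D|_F) \, m^{\dim F}/(\dim F)!$ by flatness and cohomology-and-base-change. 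Combining this with asymptotic Riemann--Roch and Serre vanishing on $Z$ yields
\[
h^0\bigl(X, m(D + \lambda f^*H)\bigr) \gtrsim \vol(D|_F) \cdot H^{\dim Z} \cdot \lambda^{\dim Z} \cdot \frac{m^{\dim X}}{(\dim F)! \, (\dim Z)!}
\]
for $m$ large, so $D + \lambda f^*H$ is big. The technical delicacy is that the Serre-vanishing threshold for the family $\{f_* \mathcal{O}_X(mD)\}_m$ may grow with $m$; this can be controlled by a uniform bound on the Castelnuovo--Mumford regularity of these sheaves (polynomial in $m$), or by invoking Viehweg-type weak positivity. The ``in particular'' clause then follows from (2), since $i_* C$ for a movable $C$ in a general fiber is exactly an $f$-vertical movable curve on $X$.
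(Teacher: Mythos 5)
This theorem is not proved in the paper: it is quoted directly from \cite{peternell08}, 6.8 Theorem as background material, so there is no in-paper argument to compare against. Judging your attempt on its own terms, the reduction of (2) to the stated \emph{key claim} is flawed because a closure has been dropped: the dual of $\overline{NM}_1(X) \cap (f^*H)^\perp$ is the \emph{closure} of $\overline{NE}^1(X) + \mathbb{R}f^*H$, not the unclosed sum, so cone duality only requires that $D$ lie in that closure --- it does \emph{not} require $D + N f^*H$ to be pseudo-effective for some $N$. That stronger claim is in fact false. Take $X = E \times E$ for a non-CM elliptic curve $E$, $f = p_1$, and $H$ a point on $E$. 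With $N^1(X)$ spanned by the two fiber classes $F_1, F_2$ and the diagonal $\Delta$ (pairwise intersection $1$, each of self-intersection $0$), put $D = F_2 - \Delta$. Then $D|_F$ has degree $0$ on every fiber $F \cong E$, hence is nef and pseudo-effective, yet $(D + N F_1)^2 = -2$ for every $N$, so $D + N f^*H$ is never pseudo-effective (on an abelian surface pseudo-effective equals nef, which forces nonnegative self-intersection).

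The subsequent reduction and volume estimate therefore cannot close the gap. With $A = F_1 + F_2 + \Delta$ ample, $(D + \epsilon A)|_F$ is big of degree $2\epsilon$, but $(D + \epsilon A + \lambda F_1)^2 = 4\epsilon(\epsilon + \lambda) + 2\epsilon^2 - 2$, so the threshold for pseudo-effectivity is $\lambda \gtrsim 1/(2\epsilon)$, which diverges as $\epsilon \to 0$; the limit at fixed $\lambda$ that you propose never enters the pseudo-effective cone. To repair the duality route you would have to show that $D|_F$ pseudo-effective puts $D$ in the \emph{closure} of $\overline{NE}^1(X) + \mathbb{R}f^*H$, a genuinely weaker statement that your volume argument does not address; and even then, equality of dual cones would only give that $i_*\overline{NM}_1(F)$ is dense in $\overline{NM}_1(X/Z)$, whereas the theorem asserts honest surjectivity, which requires an additional properness argument for $i_*$ on the cone. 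A minor point in (1): passing from a movable curve on a general fiber $F$ to a movable class on $X$ should invoke the countability of components of the Chow variety together with a \emph{very general} choice of $F$, not merely that the fibers sweep out $X$.
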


\subsection{Divisorial Zariski Decompositions}

For a pseudo-effective divisor $L$ on a surface there is a classical decomposition of $L$ into
a ``positive part'' and a ``negative part'' due to Zariski and Fujita.  In this section we review a generalization to higher dimensions developed by Nakayama (\cite{nakayama04}) in the algebraic setting and by Boucksom (\cite{boucksom04}) analytically.

\begin{defn}
Let $X$ be a normal variety.  Suppose that $L$ is a divisor with $\kappa(L) \geq 0$ and $v$ is a discrete valuation on $X$.  We define the asymptotic order of vanishing of $L$ along $v$ as follows:
\begin{equation*}
v(\Vert L \Vert) = \inf_{\begin{subarray}{l}m \in \mathbb{Z}_{> 0} \\ D \in |mL| \end{subarray}} \frac{1}{m} v(D).
\end{equation*}
\end{defn}

The following description of asymptotic valutions of big divisors was proved in the generality stated here by \cite{elmnp05}.

\begin{thrm}[\cite{elmnp05}, Theorem A] \label{valuationnumericalinvariant}
Let $X$ be a normal variety.  If $L$ is big, then $v(\Vert L \Vert)$ only depends on the numerical class of $L$.  Furthermore, the function $v(\Vert - \Vert)$ is continuous on the big cone.
\end{thrm}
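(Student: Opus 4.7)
The strategy is to prove two fundamental inequalities for the function $v(\Vert \cdot \Vert)$ on the big cone of $\mathbb{Q}$-divisors, upgrade them to a local Lipschitz bound, and then deduce both numerical invariance and continuous extension to the real big cone simultaneously. First I would establish subadditivity: $v(\Vert L+M \Vert) \leq v(\Vert L \Vert) + v(\Vert M \Vert)$ for big $\mathbb{Q}$-divisors $L, M$, which follows immediately from the definition since adding effective representatives $D \in |mL|$ and $E \in |mM|$ gives $D+E \in |m(L+M)|$ and $v(D+E) \leq v(D) + v(E)$. Next I would prove ample monotonicity: $v(\Vert L+A \Vert) \leq v(\Vert L \Vert)$ for $A$ ample. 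Given $D \in |mL|$ and $F \in |nA|$ for $n$ large, the divisor $nD + mF$ lies in $|nm(L+A)|$, so $v(\Vert L+A \Vert) \leq \tfrac{1}{m}v(D) + \tfrac{1}{n} v(F)$; letting $n \to \infty$ and then taking the infimum over $D$ gives the claim. A crucial companion fact is that $v(\Vert A \Vert) = 0$ for any ample $A$: once $|nA|$ is basepoint-free, I can select an effective section whose divisor avoids the center of $v$ on a log resolution, and any such section has $v$-value zero.

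Combining these three ingredients yields a local Lipschitz estimate on the big $\mathbb{Q}$-cone. Fix a basis $A_1, \ldots, A_r$ of $N^1(X)_{\mathbb{R}}$ consisting of ample classes. Any small perturbation of a big class $\xi$ can be written as $\alpha - \beta$ where $\alpha, \beta$ are small positive combinations of the $A_i$, hence ample. Applying subadditivity gives $v(\Vert \xi + \alpha \Vert) \leq v(\Vert \xi \Vert) + v(\Vert \alpha \Vert) = v(\Vert \xi \Vert)$; applying ample monotonicity to pass between $\xi + \alpha - \beta$ and $\xi + \alpha$, and then running the symmetric argument, bounds $|v(\Vert \xi \Vert) - v(\Vert \xi' \Vert)|$ by a constant times the supremum norm of the perturbation coefficients. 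Hence $v(\Vert \cdot \Vert)$ is locally Lipschitz on the big $\mathbb{Q}$-cone and extends uniquely to a continuous function on the full big cone in $N^1(X)_{\mathbb{R}}$.

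Numerical invariance then comes for free: if $L \equiv L'$ are big $\mathbb{Q}$-divisors, they represent the same class in $N^1(X)_{\mathbb{R}}$, and the continuous extension takes a single value there, so $v(\Vert L \Vert) = v(\Vert L' \Vert)$. I expect the main technical hurdle to be the Lipschitz estimate: ample monotonicity is one-sided, so getting a two-sided bound requires carefully moving the subtracted ample class to the appropriate side of each inequality while keeping every class in the chain big. One also needs to verify that the basepoint-free-plus-generic-section argument giving $v(\Vert A \Vert) = 0$ works for an arbitrary divisorial valuation, which is where passing to a log resolution of the center of $v$ is used.
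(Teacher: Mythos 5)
The paper does not prove this statement; it is imported as a black box from \cite{elmnp05}, Theorem A. So I am comparing your proposal against the actual argument of \cite{elmnp05}, which does follow your overall plan: subadditivity and a monotonicity estimate, a local Lipschitz bound on the big rational cone, extension by continuity, and numerical invariance as a corollary. Your subadditivity argument, the computation $v(\Vert A \Vert) = 0$ via base-point-freeness, and the deduction of numerical invariance from continuity are all correct and match that reference.

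There is, however, a genuine gap in the Lipschitz step, and it is not a matter of carefully rearranging inequalities. Subadditivity, ample monotonicity, and $v(\Vert A \Vert)=0$ all produce \emph{upper} bounds: they say $v(\Vert \cdot \Vert)$ does not increase when an ample class is added. Tracing your chain with $\xi' = \xi + \alpha - \beta$, what you actually get is $v(\Vert \xi + \alpha \Vert) \le v(\Vert \xi \Vert)$ and $v(\Vert \xi + \alpha \Vert) = v(\Vert \xi' + \beta \Vert) \le v(\Vert \xi' \Vert)$ --- both $v(\Vert\xi\Vert)$ and $v(\Vert\xi'\Vert)$ dominate the same middle term, and nothing bounds either from below. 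The missing ingredients are (i) homogeneity $v(\Vert tL \Vert) = t\,v(\Vert L \Vert)$, and (ii) the fact that \emph{bigness} of $L$ guarantees some $c>0$ with $cL - A$ big, hence $v(\Vert cL - A \Vert)$ \emph{finite}. Combining these via the rescaling identity $(1+c\epsilon)L = (L+\epsilon A) + \epsilon(cL - A)$, subadditivity gives
\[
(1+c\epsilon)\,v(\Vert L \Vert) \;\le\; v(\Vert L+\epsilon A\Vert) + \epsilon\, v(\Vert cL - A\Vert),
\]
which rearranges to the lower bound $v(\Vert L+\epsilon A\Vert) \ge v(\Vert L\Vert) - \epsilon\bigl(v(\Vert cL-A\Vert) - c\,v(\Vert L\Vert)\bigr)$. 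Your proposal treats bigness as a side condition (``keeping every class in the chain big''), but it is load-bearing: it is exactly what makes the error constant finite. Without the rescaling trick, the three facts you prove are consistent with $v(\Vert \cdot \Vert)$ jumping up discontinuously, so the Lipschitz bound does not follow from them alone.
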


The behavior of asymptotic valuations along the pseudo-effective boundary may be very complicated.  We can define a better-behaved invariant by taking a limit over nearby big divisors.

\begin{defn}
Let $X$ be a smooth variety and let $L$ be a pseudo-effective divisor on $X$.  Fix an ample divisor $A$ on $X$.  For any valuation $v$, we define
\begin{equation*}
\sigma_{v}(L) = \lim_{\epsilon \downarrow 0} v(\Vert L + \epsilon A \Vert).
\end{equation*}
\cite{nakayama04} verifies that this definition is independent of $A$.  For big divisors $\sigma_{v}$ simply recovers $v$ by continuity, so $\sigma_{v}$ is a lower semi-continuous extension of $v$ to the entire pseudo-effective cone. 
\end{defn}

We now restrict our attention to valuations $v_{\Gamma}$ that measure the order of vanishing along a prime divisor $\Gamma$ on $X$.  The key fact about the corresponding function $\sigma_{\Gamma}$ is the following theorem.

\begin{thrm}[\cite{nakayama04},Corollary III.1.11] \label{nsigmafinitelymanycomponents}
Let $X$ be a smooth variety.  For any pseudo-effective divisor $L$ there are only finitely many prime divisors $\Gamma$ with $\sigma_{\Gamma}(L) > 0$.
\end{thrm}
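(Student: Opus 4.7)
My plan is to combine the well-understood big-divisor case with a perturbation argument. Fix an ample $A$ and for each $\epsilon > 0$ set $S_\epsilon = \{\Gamma \text{ prime} : v_\Gamma(\|L + \epsilon A\|) > 0\}$; every $L + \epsilon A$ is big, so Theorem \ref{valuationnumericalinvariant} is available. Subadditivity of $v_\Gamma(\|\cdot\|)$ (immediate from the defining infimum, since a representative in $|mD_1|$ plus one in $|mD_2|$ lies in $|m(D_1 + D_2)|$) together with $v_\Gamma(\|H\|) = 0$ for ample $H$ shows that $v_\Gamma(\|L+\epsilon A\|)$ is non-increasing in $\epsilon$, so the sets $S_\epsilon$ are nested: $S_\epsilon \subseteq S_{\epsilon'}$ whenever $\epsilon \geq \epsilon'$. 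By the definition $\sigma_\Gamma(L) = \lim_{\epsilon \to 0} v_\Gamma(\|L + \epsilon A\|)$ one has $\{\Gamma : \sigma_\Gamma(L) > 0\} = \bigcup_{\epsilon > 0} S_\epsilon$, so it suffices to bound $|S_\epsilon|$ uniformly in $\epsilon$.

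I would establish the absolute bound $|\{\Gamma : v_\Gamma(\|B\|) > 0\}| \leq \rho(X)$ for any big divisor $B$ by showing that the classes $[\Gamma]$ with $v_\Gamma(\|B\|) > 0$ are linearly independent in $N^1(X)_{\mathbb{R}}$. Suppose for contradiction that $\Gamma_1, \ldots, \Gamma_k$ satisfy a nontrivial relation $\sum t_i [\Gamma_i] = 0$, and write $a_i = v_{\Gamma_i}(\|B\|) > 0$. Setting $s = \min\{a_i/t_i : t_i > 0\}$, the effective $\mathbb{R}$-divisor $N' := \sum (a_i - s t_i)\Gamma_i$ is numerically equivalent to $\tilde{N} := \sum a_i \Gamma_i$ and satisfies $\mult_{\Gamma_j}(N') = 0$ for some $j$. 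Given any $\delta > 0$, the defining infimum yields an effective $E \sim_{\mathbb{Q}} B$ with $\mult_{\Gamma_j}(E) \leq a_j + \delta$, while the lower bounds $\mult_{\Gamma_i}(E) \geq a_i$ hold automatically. Hence $E - \tilde{N}$ is effective with multiplicity at most $\delta$ along $\Gamma_j$, showing that $B - \tilde{N}$ is big and $\sigma_{\Gamma_j}(B - \tilde{N}) = 0$. Using Theorem \ref{valuationnumericalinvariant} to transfer from $\tilde{N}$ to the numerically equivalent $N'$, and applying subadditivity of $\sigma_{\Gamma_j}$ to $B \equiv (B - N') + N'$, one obtains
\[
a_j = \sigma_{\Gamma_j}(B) \leq \sigma_{\Gamma_j}(B - N') + \sigma_{\Gamma_j}(N') \leq 0 + \mult_{\Gamma_j}(N') = 0,
\]
a contradiction.

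Finally, since the $S_\epsilon$ are nested and each has cardinality at most $\rho(X)$, the union $\bigcup_\epsilon S_\epsilon = \{\Gamma : \sigma_\Gamma(L) > 0\}$ also has cardinality at most $\rho(X)$. The main obstacle is the numerical-independence step: one has to build the representative $E$ with simultaneous control of $\mult_{\Gamma_j}(E)$ and the automatic lower bounds at the remaining $\Gamma_i$, without circularly assuming that $N_\sigma(B)$ already has finite support. The asymmetric nature of the defining infimum---which constrains only $\Gamma_j$ actively, while the inequalities at the other $\Gamma_i$ come for free---is what makes the argument noncircular.
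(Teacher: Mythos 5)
Since the paper cites this result directly from \cite{nakayama04} without giving its own proof, I can only assess your argument on its own merits: the strategy---reducing to the big case via the nested sets $S_\epsilon$ and then showing that the classes $[\Gamma]$ with $v_\Gamma(\Vert B\Vert)>0$ are linearly independent in $N^1(X)_{\mathbb{R}}$---is the standard one (it is essentially Nakayama's Lemma III.1.10), and it is correct apart from one step that is asserted without justification. You write ``showing that $B-\tilde N$ is big,'' but what you have actually shown is only that $B-\tilde N$ is $\mathbb{Q}$-effective, whereas bigness is precisely what is needed to invoke Theorem~\ref{valuationnumericalinvariant} and transfer the vanishing of $v_{\Gamma_j}$ from $B-\tilde N$ to the numerically equivalent $B-N'$. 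The gap is easy to close: since every $D\in|mB|$ satisfies $\mult_{\Gamma_i}(D)\ge\lceil ma_i\rceil$ for each $i$, subtracting $\sum_i\lceil ma_i\rceil\Gamma_i$ identifies $|mB|$ with $|\lfloor m(B-\tilde N)\rfloor|$, so $\vol(B-\tilde N)=\vol(B)>0$. Alternatively you can sidestep the bigness of $B-\tilde N$ entirely by using numerical invariance of $\sigma_\Gamma$ on the whole pseudo-effective cone (equivalently, part~(1) of the displayed Proposition on divisorial Zariski decompositions, which says $N_\sigma$ depends only on the numerical class) in place of numerical invariance of $v_\Gamma(\Vert\cdot\Vert)$ on the big cone: $B-\tilde N$ is pseudo-effective with $\sigma_{\Gamma_j}(B-\tilde N)\le v_{\Gamma_j}(\Vert B-\tilde N\Vert)=0$, and since $B-N'\equiv B-\tilde N$ one gets $\sigma_{\Gamma_j}(B-N')=0$ directly, after which your subadditivity computation goes through unchanged.
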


\begin{defn}
Let $X$ be a smooth variety and $L$ be a pseudo-effective divisor. We define $\mathbb{R}$-divisors
\begin{equation*}
N_{\sigma}(L) = \sum \sigma_{\Gamma}(L) \Gamma \qquad \qquad P_{\sigma}(L) = L - N_{\sigma}(L)
\end{equation*}
The decomposition $L = N_{\sigma}(L) + P_{\sigma}(L)$ is called the \emph{divisorial Zariski
decomposition} of $L$.
\end{defn}

The following proposition records the basic properties of the divisorial Zariski decomposition.

\begin{prop} [\cite{nakayama04}, Lemma III.1.4 and Theorem V.1.3]
Let $X$ be a smooth variety and let $L$ be a pseudo-effective divisor.
\begin{enumerate}
\item $N_{\sigma}(L)$ depends only on the numerical class of $L$.
\item $\Supp(N_{\sigma}(L))$ coincides with the divisorial components of $\mathbf{B}_{-}(L)$.
\item $H^{0}(X,\mathcal{O}_{X}(\lfloor mP_{\sigma}(L) \rfloor)) \to H^{0}(X,\mathcal{O}_{X}(\lfloor mL \rfloor))$ is an isomorphism for every $m \geq 0$.
\end{enumerate}
\end{prop}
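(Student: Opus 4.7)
The plan is to verify each of the three parts directly from the definitions, using Theorem~\ref{valuationnumericalinvariant} as the main input to transfer properties of $v(\Vert \cdot \Vert)$ from the big cone to the pseudo-effective boundary.

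For part (1), by definition $\sigma_{\Gamma}(L) = \lim_{\epsilon \downarrow 0} v_{\Gamma}(\Vert L + \epsilon A \Vert)$ with $L + \epsilon A$ big, and by Theorem~\ref{valuationnumericalinvariant} each term in this limit depends only on the numerical class of $L + \epsilon A$, hence only on the numerical class of $L$. Passing to the limit shows $\sigma_{\Gamma}(L)$ is a numerical invariant, and summing over the finitely many $\Gamma$ with $\sigma_{\Gamma}(L) > 0$ (Theorem~\ref{nsigmafinitelymanycomponents}) shows $N_{\sigma}(L)$ depends only on the numerical class of $L$.

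For part (2), I would first verify that $\epsilon \mapsto v_{\Gamma}(\Vert L + \epsilon A \Vert)$ is non-increasing in $\epsilon$: given $\epsilon_{1} > \epsilon_{2} > 0$, an approximate minimizer $D$ computing $v_{\Gamma}(\Vert L+\epsilon_{2}A\Vert)$ can be augmented by an effective $\mathbb{R}$-representative of $m(\epsilon_{1}-\epsilon_{2})A$ that avoids $\Gamma$ (possible by ampleness), giving a test divisor for $v_{\Gamma}(\Vert L + \epsilon_{1}A\Vert)$ with the same $v_{\Gamma}$-value. Consequently $\sigma_{\Gamma}(L) = \sup_{\epsilon > 0} v_{\Gamma}(\Vert L + \epsilon A \Vert)$, so $\sigma_{\Gamma}(L) > 0$ is equivalent to $v_{\Gamma}(\Vert L + \epsilon A \Vert) > 0$ for some (hence every sufficiently small) $\epsilon > 0$, equivalently $\Gamma \subset \mathbf{B}_{\mathbb{R}}(L + \epsilon A)$ for small $\epsilon$. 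Comparing with the definition $\mathbf{B}_{-}(L) = \bigcup_{A' \text{ ample}} \mathbf{B}_{\mathbb{R}}(L + A')$ yields the desired equivalence with $\Gamma$ being a divisorial component of $\mathbf{B}_{-}(L)$.

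For part (3) the inclusion $\lfloor mP_{\sigma}(L) \rfloor \leq \lfloor mL \rfloor$ (valid because $N_{\sigma}(L) \geq 0$) induces an injection on global sections, so only surjectivity is at issue. Given a nonzero $s \in H^{0}(X, \mathcal{O}_{X}(\lfloor mL \rfloor))$, form the effective $\mathbb{R}$-divisor $D' := \mathrm{div}(s) + \lfloor mL \rfloor + \{mL\}$, which satisfies $D' \sim_{\mathbb{R}} mL$. The key estimate is $v_{\Gamma}(D') \geq m\sigma_{\Gamma}(L)$ for each prime divisor $\Gamma$: perturbing $D'$ by an effective representative of $m\epsilon A$ that avoids $\Gamma$ produces an effective divisor $\mathbb{R}$-linearly equivalent to $m(L + \epsilon A)$, which bounds $v_{\Gamma}(\Vert L + \epsilon A\Vert)$ above by $v_{\Gamma}(D')/m$, and letting $\epsilon \downarrow 0$ yields the claim. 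Unpacking the definition of $D'$ gives $v_{\Gamma}(\mathrm{div}(s)) \geq m\sigma_{\Gamma}(L) - \{mL\}_{\Gamma} - \lfloor mL\rfloor_{\Gamma} = -mP_{\sigma}(L)_{\Gamma}$, and since $v_{\Gamma}(\mathrm{div}(s))$ is an integer this upgrades to $v_{\Gamma}(\mathrm{div}(s)) \geq -\lfloor mP_{\sigma}(L) \rfloor_{\Gamma}$ for every $\Gamma$, so $s$ comes from a section of $\mathcal{O}_{X}(\lfloor mP_{\sigma}(L) \rfloor)$.

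The main obstacle throughout is the quantitative control of $v_{\Gamma}(\Vert \cdot \Vert)$ under small ample perturbations: both the monotonicity argument in part (2) and the estimate $v_{\Gamma}(D') \geq m\sigma_{\Gamma}(L)$ in part (3) rest on the ability to represent a prescribed $\mathbb{R}$-ample class by an effective $\mathbb{R}$-divisor avoiding a prescribed prime $\Gamma$. This flexibility, which ultimately comes from the openness of the ample cone, is what converts the nice behavior of $v(\Vert \cdot \Vert)$ on the big cone into usable information along the pseudo-effective boundary, and is the technical heart of the argument.
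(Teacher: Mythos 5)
The paper does not prove this proposition; it is stated with a citation to Nakayama's book (Lemma~III.1.4 and Theorem~V.1.3), so there is no internal proof to compare against. Your proposal is a reconstruction of the standard arguments, and Parts~(1) and~(3) reproduce them correctly: in~(1) the passage from the numerical invariance of $v(\Vert-\Vert)$ on the big cone (Theorem~\ref{valuationnumericalinvariant}) through the defining limit of $\sigma_{\Gamma}$, together with the finiteness from Theorem~\ref{nsigmafinitelymanycomponents}, is exactly right; in~(3) the chain ``form $D' = \mathrm{div}(s) + mL$, bound $v_{\Gamma}(D') \geq m\sigma_{\Gamma}(L)$ by ample perturbation, then use integrality of $v_{\Gamma}(\mathrm{div}(s))$'' is the correct argument.

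In~(2) there is a step you elide. You treat ``$v_{\Gamma}(\Vert L + \epsilon A\Vert) > 0$'' and ``$\Gamma \subset \mathbf{B}_{\mathbb{R}}(L + \epsilon A)$'' as equivalent for a fixed $\epsilon$. The forward implication is fine (modulo approximating $\sim_{\mathbb{R}}$ by $\sim_{\mathbb{Q}}$), but the reverse implication for fixed $\epsilon$ is not: $\Gamma \subset \mathbf{B}_{\mathbb{R}}(L + \epsilon A)$ only says every effective representative contains $\Gamma$, which does not by itself give a uniform positive lower bound on the multiplicities, i.e.\ it does not directly force $v_{\Gamma}(\Vert L + \epsilon A\Vert) > 0$. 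The fix is a two-radius argument: if $v_{\Gamma}(\Vert L + \epsilon_{2} A\Vert) = 0$ for some $\epsilon_{2} < \epsilon_{1}$, choose $D$ with $v_{\Gamma}(D)/m$ so small that $(\epsilon_{1}-\epsilon_{2})A - (v_{\Gamma}(D)/m)\Gamma$ is still ample, subtract $(v_{\Gamma}(D)/m)\Gamma$ from $D/m$, and add a general effective representative of that ample class to exhibit an effective $\mathbb{R}$-divisor $\sim_{\mathbb{R}} L + \epsilon_{1}A$ avoiding $\Gamma$, so $\Gamma \not\subset \mathbf{B}_{\mathbb{R}}(L + \epsilon_{1}A)$. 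This is the same ``absorb the defect into the ample part'' maneuver you carry out explicitly in~(3); it just needs to be stated in~(2) as well, together with the observation that $\mathbf{B}_{-}(L)$ is an increasing countable union $\bigcup_{n} \mathbf{B}_{\mathbb{R}}(L + \tfrac{1}{n}A)$, so that containment of the irreducible $\Gamma$ in the union forces containment in one term.
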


\subsection{Numerical Dimension}

The numerical dimension $\nu(L)$ is a numerical measure of the positivity of $L$ introduced by \cite{nakayama04} and \cite{bdpp04}.  We will use the definition from \cite{nakayama04}.

\begin{defn} 
Let $X$ be a smooth variety, $L$ a pseudo-effective divisor, and $W$ any reduced closed subset of $X$.  Let $\phi: Y \to X$ be any smooth resolution of $\mathcal{I}_{W}$, and define the divisor $E$ by setting $\mathcal{O}_{X}(-E) = \phi^{-1}\mathcal{I}_{W} \cdot \mathcal{O}_{Y}$.  We say that $L$ numerically dominates $W$, and write $L \succcurlyeq W$, if there exists $\epsilon > 0$ such that $\phi^{*}L - \epsilon E$ is pseudo-effective.  Note that numerical domination is independent of the choice of resolution since the pseudo-effectiveness of a divisor is preserved by pull-backs.
\end{defn}

\begin{rmk} \label{numdimalternateformulation}
It is sometimes useful to have a slightly different formulation.  \cite{lehmann10} Remark 5.2 checks that $\phi^{*}L - \epsilon E$ is not pseudo-effective for any $\epsilon>0$ iff there is some ample divisor $A$ such that  $x\phi^{*}L - E + A$ is not pseudo-effective for any $x \in \mathbb{R}$.
\end{rmk}

\begin{defn} \label{numdimdefn}
Let $X$ be a smooth variety and let $L$ be a pseudo-effective divisor.  We define the numerical dimension $\nu(L)$ as follows.  If $L$ is big, by convention we set $\nu(L) = \dim(X)$.  Otherwise we define
\begin{equation*}
\nu(L) = \min \{ \dim(W) | L \not \succcurlyeq W \}
\end{equation*}
where $W$ varies over all reduced closed subsets of $X$.

If $X$ is normal, we define $\nu(L)$ by first pulling $L$ back to any smooth model of $X$.  Theorem \ref{numdimproperties} shows that this definition is independent of the choice of model.
\end{defn}

It is shown in \cite{lehmann10} that Definition \ref{numdimdefn} is equivalent to the other definitions of the numerical dimension found in \cite{nakayama04} and \cite{bdpp04}.

\begin{thrm}[\cite{nakayama04},Proposition V.2.22 and \cite{lehmann10}, Theorem 6.2]  \label{numdimproperties}
Let $X$ be a smooth variety and let $L$ be a pseudo-effective divisor.
\begin{enumerate}
\item $\nu(L)$ only depends on the numerical class of $L$.
\item $0 \leq \nu(L) \leq \dim(X)$ and $\kappa(L) \leq \nu(L)$.
\item $\nu(L) = \dim(X)$ iff $L$ is big and $\nu(L) = 0$ iff $P_{\sigma}(L) \equiv 0$.
\item $\nu(L) = \nu(P_{\sigma}(L))$.
\item If $\phi: Y \to X$ is a birational morphism from a smooth variety $Y$ then $\nu(\phi^{*}L) = \nu(L)$.
\item Suppose that $f: X \to Z$ has connected fibers and $F$ is a very general fiber of $f$.  Then
$\nu(L) \leq \nu(L|_{F}) + \dim(Z)$.
\end{enumerate}
\end{thrm}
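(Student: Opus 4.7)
The plan is to verify the six items in turn, using the Nakayama-style definition of $\nu(L)$ via the relation $\succcurlyeq$. Items (1) and (5) are essentially built into the definition. For (1), the relation $L \succcurlyeq W$ is phrased entirely in terms of pseudo-effectiveness of $\phi^{*}L - \epsilon E$, and since pseudo-effectiveness depends only on the numerical class, so does $\nu(L)$. For (5), given a birational morphism $\phi: Y \to X$ of smooth varieties, a common log resolution of the ideal of $W$ and its preimage on $Y$ shows that the exceptional divisors match up, and pulling back preserves pseudo-effectiveness; so $L \succcurlyeq W$ iff $\phi^{*}L \succcurlyeq \phi^{-1}(W)$, and these give the same minimum dimension.

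For items (2)–(4) I would use the divisorial Zariski decomposition and the results on $\sigma_{\Gamma}$ recorded above. The bounds in (2) are immediate: the minimum is nonnegative, and one can always take $W$ of dimension $\dim(X) - 1$ or less when $L$ is not big. The inequality $\kappa(L) \leq \nu(L)$ is established by showing that whenever $\dim W < \kappa(L)$, a general member of $|mL|$ for large $m$ meets $W$ in sufficiently high multiplicity so that, after pulling back to a resolution of $W$, one obtains an effective divisor of the form $\phi^{*}(mL) - \epsilon m E + (\text{effective})$, giving $L \succcurlyeq W$. In (3), if $L$ is not big then it lies on the pseudo-effective boundary, and the duality of Theorem \ref{bdppmovablecurvetheorem} produces a movable curve class $\alpha$ with $L \cdot \alpha = 0$; an appropriate divisorial $W$ supporting curves in this class then fails to be dominated by $L$, forcing $\nu(L) < \dim(X)$. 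The equivalence $\nu(L) = 0 \iff P_{\sigma}(L) \equiv 0$ combines Theorem \ref{nsigmafinitelymanycomponents} with the fact that $N_{\sigma}(L)$ is rigid along the restricted base locus and so does not dominate a general point, while if $P_{\sigma}(L) \not\equiv 0$, it pairs positively with some movable curve and hence dominates points on that curve. Item (4) then follows from (3) together with $\sigma_{\Gamma}(P_{\sigma}(L)) = 0$ for all prime $\Gamma$, which shows that $\nu$ only sees the positive part.

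The substantive work is in (6). Let $k = \nu(L|_{F})$ for a very general fiber $F$; the task is to produce a reduced closed subset $W \subset X$ of dimension at most $k + \dim(Z)$ with $L \not\succcurlyeq W$. Choose $W_{F} \subset F$ of dimension $k$ with $L|_{F} \not\succcurlyeq W_{F}$, and spread $W_{F}$ out over $Z$ (after applying Theorem \ref{hironakaflattening} to reduce to the case where $f$ is flat) to a subvariety $W \subset X$ of dimension $k + \dim(Z)$ whose intersection with a general fiber recovers $W_{F}$. On a common log resolution $\phi: Y \to X$ handling both the ideal of $W$ on $X$ and the ideal of $W_{F}$ on the resolved fiber $F'$, the exceptional divisor $E$ of $W$ restricts to the exceptional divisor $E_{F}$ of $W_{F}$. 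If $\phi^{*}L - \epsilon E$ were pseudo-effective on $Y$ for some $\epsilon > 0$, Theorem \ref{relativebdpp}(1) would give that its restriction $\phi^{*}L|_{F'} - \epsilon E_{F}$ remains pseudo-effective on a general fiber, contradicting $L|_{F} \not\succcurlyeq W_{F}$. Hence $L \not\succcurlyeq W$, giving $\nu(L) \leq k + \dim(Z)$. The main obstacle is precisely this restriction step: arranging a single birational model on which the exceptional data for $W$ restricts cleanly to that of $W_{F}$, and ensuring that the very-general fiber assumption is strong enough to invoke the pseudo-effectiveness of the restriction.
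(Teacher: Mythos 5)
This theorem is not proved in the paper; it is stated in the background section and attributed to \cite{nakayama04}, Proposition V.2.22, and \cite{lehmann10}, Theorem 6.2. There is therefore no proof in the paper to compare against, and your proposal has to be judged on its own.

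Your outline for (1), (2), and (5) is in the right spirit and follows the standard route for the Nakayama-style definition. Parts (3), (4), and (6), however, have genuine gaps. For the implication ``$P_\sigma(L)\not\equiv 0 \Rightarrow \nu(L) \geq 1$'' in (3), the observation that $P_\sigma(L)$ pairs positively with some movable curve $C$ does not directly yield $L \succcurlyeq \{p\}$: after blowing up a point $p$ on $C$, you control the intersection of $\phi^{*}L - \epsilon E$ against the strict transform of $C$ alone, whereas pseudo-effectiveness must be tested against all movable curve classes on the blow-up. Establishing this direction requires a different argument (e.g.\ via the negative part of the Zariski decomposition of the perturbed restriction, as in Nakayama). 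For (4), the statement $\nu(L) = \nu(P_\sigma(L))$ is not a consequence of (3); item (3) handles only the extreme values $\nu = 0$ and $\nu = \dim X$. The easy inequality $\nu(P_\sigma(L)) \leq \nu(L)$ follows from $N_\sigma(L) \geq 0$, but the reverse inequality requires a separate and nontrivial argument showing that the negative part contributes nothing to numerical domination; the remark that $\sigma_\Gamma(P_\sigma(L)) = 0$ by itself does not settle this.

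For (6), your plan of spreading out $W_F$ and invoking Theorem~\ref{relativebdpp}(1) is the natural approach, but the step you flag yourself as problematic --- arranging a single model on which the exceptional divisor for $W$ restricts to the exceptional divisor for $W_F$ on a very general fiber, while simultaneously preserving pseudo-effectiveness of the restriction --- is exactly where the technical content lies. One must also confirm that $W_F$ can be chosen to vary in a family over $Z$ and that the two uses of ``(very) general fiber'' (for computing $\nu(L|_F)$ and for restricting the putatively pseudo-effective class) can be made compatible; the Grothendieck-style countability argument that handles this is not indicated. None of these gaps is necessarily fatal, but as written the proposal is a sketch that omits the parts that actually require work.
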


The restricted numerical dimension $\nu_{X|V}(L)$ is a similar notion that measures the positivity of $L$ along a subvariety $V$ of $X$.  It will be most convenient to use a formulation that has a very different feel from Definition \ref{numdimdefn}.

\begin{defn} \label{restnumdimdefn}
Let $X$ be a normal variety,  $L$ a pseudo-effective divisor, and $V$ a subvariety of $X$ such that $V \not \subset \mathbf{B}_{-}(L) \cup \Supp(L) \cup \Sing(X)$.  We define
\begin{equation*}
\nu_{X|V}(L) = \max \{ \dim W | \inf_{\phi} \vol(P_{\sigma}(\phi^{*}L)|_{\widetilde{W}}) >0 \}
\end{equation*}
where $W$ varies over all very general intersections of $V$ with very ample divisors, $\phi$ varies over all birational maps such that no $\phi$-exceptional center contains $W$, and $\widetilde{W}$ denotes the strict transform of $W$.
\end{defn}

Although the restricted numerical dimension is defined by considering all birational models, often it suffices to consider just one fixed sequence of models.

\begin{prop}[\cite{lehmann10}, Proposition 7.9] \label{mapscomputingnumdim}
Let $X$ be a smooth variety and let $L$ be a pseudo-effective divisor.  There is a sequence of birational maps $\phi_{m}: \widetilde{X}_{m} \to X$ such that the following holds.  Suppose that $V$ is any subvariety not contained in $\mathbf{B}_{-}(L) \cup \Supp(L)$.  Then
\begin{equation*}
\nu_{X|V}(L) = \max \{ \dim W | \lim_{m} \vol_{\widetilde{W}_{m}}(P_{\sigma}(\phi_{m}^{*}L)|_{\widetilde{W}_{m}}) >0 \}
\end{equation*}
where $W$ varies over all very general intersections of $V$ with very ample divisors.
\end{prop}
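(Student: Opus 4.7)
The plan is to exhibit a single sequence of birational models $\phi_m: \widetilde{X}_m \to X$, depending only on $L$, whose restricted positive parts compute the infimum in Definition \ref{restnumdimdefn} in the limit, simultaneously for every admissible subvariety $V$. Fix a very ample divisor $A$ on $X$ and set $L_m := L + \tfrac{1}{m}A$, which is big for every $m$. Using Fujita approximation of $L_m$, I choose $\phi_m: \widetilde{X}_m \to X$ to be a log resolution of the base ideal of a sufficiently large multiple of $L_m$, together with a nef divisor $M_m \leq \phi_m^* L_m$ satisfying $\vol(M_m) > \vol(L_m) - \tfrac{1}{m}$. After passing to common refinements, one may assume $\widetilde{X}_{m+1}$ dominates $\widetilde{X}_m$. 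Because the construction depends only on $L$ and $A$, the same sequence will serve every $V$.

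The key technical input is a monotonicity property for divisorial Zariski decompositions under birational pullback: if $\mu: Y' \to Y$ is a birational morphism between smooth models of $X$ and $D$ is pseudo-effective on $Y$, then $\mu^* P_\sigma(D) - P_\sigma(\mu^* D)$ is an effective, $\mu$-exceptional $\mathbb{R}$-divisor. For a non-exceptional prime divisor $\Gamma$ the asymptotic valuation $\sigma_\Gamma$ is preserved under strict transform, so the discrepancy is supported only on $\mu$-exceptional divisors; effectivity then follows from a coefficient-by-coefficient comparison using Nakayama's defining property of $N_\sigma$. When $W$ is admissible, its strict transform $\widetilde{W}'$ in $Y'$ avoids this exceptional locus by hypothesis, and restricting the inequality to $\widetilde{W}'$ yields
\begin{equation*}
\vol_{\widetilde{W}'}\bigl(P_\sigma(\mu^* D)|_{\widetilde{W}'}\bigr) \leq \vol_{\widetilde{W}}\bigl(P_\sigma(D)|_{\widetilde{W}}\bigr).
\end{equation*}

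Given an arbitrary admissible $\phi: Y \to X$, take a common smooth refinement $Z$ of $\widetilde{X}_m$ and $Y$. Applying monotonicity above $Y$ bounds the volume on the strict transform of $W$ in $Z$ by $\vol(P_\sigma(\phi^* L)|_{\widetilde{W}})$, while applying it above $\widetilde{X}_m$, combined with the Fujita approximation controlling $\phi_m^* L_m - M_m$ and the continuity of $v(\Vert - \Vert)$ on the big cone (Theorem \ref{valuationnumericalinvariant}), shows that the volume on $Z$ differs from $\vol(P_\sigma(\phi_m^* L)|_{\widetilde{W}_m})$ by $O(1/m)$. Letting $m \to \infty$ and taking infimum over $\phi$ yields the desired equality. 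The main obstacle is the monotonicity lemma itself, which demands careful bookkeeping of the asymptotic valuations $\sigma_v$ on exceptional divisors; passing from $L_m$ to $L$ in the limit is easier, relying on Theorem \ref{valuationnumericalinvariant} to absorb the perturbation by $\tfrac{1}{m}A$.
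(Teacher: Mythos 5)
This proposition is cited from \cite{lehmann10}, Proposition 7.9, and the present paper gives no proof of it, so there is nothing internal to compare against; I will assess your argument on its own merits.

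Your overall strategy is reasonable, and the monotonicity input you isolate is correct: for a birational morphism $\mu: Y' \to Y$ of smooth models, $N_{\sigma}(\mu^{*}D) \geq \mu^{*}N_{\sigma}(D)$, with the difference effective and $\mu$-exceptional. The fastest way to see this is to note that any effective $D'' \equiv D + \epsilon A$ satisfies $D'' \geq N_{\sigma}(D + \epsilon A)$ coefficientwise, apply the divisorial valuation $v_{E}$ of any $\mu$-exceptional prime $E$ and take the infimum over $D''$ and the limit $\epsilon \downarrow 0$; this gives $\sigma_{E}(\mu^{*}D) \geq \mathrm{mult}_{E}\bigl(\mu^{*}N_{\sigma}(D)\bigr)$ on exceptional components, while the non-exceptional coefficients match. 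One small overstatement: you say the strict transform $\widetilde{W}'$ \emph{avoids} the exceptional locus, but the admissibility condition only guarantees $W$ is not \emph{contained} in any exceptional center. The volume comparison survives anyway, since $\vol_{\widetilde{W}'}(\mu^{*}P_{\sigma}(D)|_{\widetilde{W}'}) = \vol_{\widetilde{W}}(P_{\sigma}(D)|_{\widetilde{W}})$ by birational invariance of volume, and $P_{\sigma}(\mu^{*}D)|_{\widetilde{W}'} \leq \mu^{*}P_{\sigma}(D)|_{\widetilde{W}'}$ then yields the inequality.

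The genuine gap is the converse estimate, which you summarize as ``the volume on $Z$ differs from $\vol(P_{\sigma}(\phi_{m}^{*}L)|_{\widetilde{W}_{m}})$ by $O(1/m)$.'' The monotonicity lemma only tells you the volume on $Z$ is \emph{at most} the volume on $\widetilde{X}_{m}$; the needed lower bound is where the work lies, and Fujita approximation of $L_{m} = L + \tfrac{1}{m}A$ does not supply it. What Fujita gives is a nef $M_{m} \leq \phi_{m}^{*}L_{m}$ with $\vol(M_{m})$ close to the \emph{total} volume $\vol(L_{m})$, but this does not control the \emph{restricted} volume $\vol_{\widetilde{W}_{m}}\bigl((P_{\sigma}(\phi_{m}^{*}L_{m}) - M_{m})|_{\widetilde{W}_{m}}\bigr)$: the effective difference $P_{\sigma}(\phi_{m}^{*}L_{m}) - M_{m}$ can have small total volume but non-negligible restriction to a proper subvariety $W$, and this must be ruled out uniformly across all admissible $V$ and $W$. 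A second, related gap is the passage from $L_{m}$ back to $L$. You invoke Theorem \ref{valuationnumericalinvariant}, but that gives continuity of asymptotic valuations on the interior of the big cone; $L$ sits on the pseudo-effective boundary, and the restricted volume of $P_{\sigma}$ along a fixed subvariety need not vary continuously there. As stated, neither step is established, so the proposal does not yet constitute a proof; both would require some form of uniform control of restricted volumes (e.g.\ via restricted positive intersection products, or by choosing $\phi_{m}$ to resolve asymptotic base ideals in a way that directly tracks $\sigma_{v}$ for all relevant divisorial valuations $v$) rather than the total-volume statement of Fujita's theorem.
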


The basic properties of $\nu_{X|V}(L)$ are recorded in the following theorem.

\begin{thrm}[\cite{lehmann10}, Theorem 7.4]  \label{restnumdimproperties}
Let $X$ be a normal variety, $L$ a pseudo-effective divisor, and $V$ a subvariety not contained in $\mathbf{B}_{-}(L) \cup \Supp(L) \cup \Sing(X)$.
\begin{enumerate}
\item $\nu_{X|V}(L)$ only depends on the numerical class of $L$.
\item $\nu_{X|V}(L) \leq \nu(L)$ and  $\nu_{X|V}(L) \leq \nu(L|_{V})$.
\item $\nu_{X|V}(L) = \nu_{X|V}(P_{\sigma}(L))$.
\item If $\phi: Y \to X$ is a birational morphism from a smooth variety $Y$ such that $V$ is not contained in any $\phi$-exceptional center then $\nu_{Y|V'}(\phi^{*}L) = \nu_{X|V}(L)$ where $V'$ is the strict transform of $V$.
\end{enumerate}
\end{thrm}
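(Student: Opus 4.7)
The plan is to verify the four properties by unpacking Definition \ref{restnumdimdefn} and leveraging the numerical invariance of the divisorial Zariski decomposition together with Proposition \ref{mapscomputingnumdim}. Property (1) is immediate: $P_{\sigma}$ depends only on the numerical class of its input, the volume is a numerical invariant of $\mathbb{R}$-divisors, and the collection of admissible $\phi$ and $W$ depends only on $X$, $V$, and $\mathbf{B}_{-}(L)$ (which is itself numerical). Taking infimum over $\phi$ and maximum over $W$ preserves numerical invariance.

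For (3), I would observe that $\phi^{*}N_{\sigma}(L)$ is effective and its prime components lie in $\mathbf{B}_{-}(\phi^{*}L)$, so they are absorbed into $N_{\sigma}(\phi^{*}L)$. Hence $P_{\sigma}(\phi^{*}L) = P_{\sigma}(\phi^{*}P_{\sigma}(L))$ numerically, and the two defining expressions for $\nu_{X|V}$ agree term by term, once one checks that the admissibility hypothesis on $V$ transfers (using that $\mathbf{B}_{-}(L)$ and $\mathbf{B}_{-}(P_{\sigma}(L))$ agree off $\Supp(N_{\sigma}(L))$).

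Property (2) is the main content. The bound $\nu_{X|V}(L) \leq \nu(L)$ follows because if $\dim W > \nu(L)$, then Theorem \ref{numdimproperties}(4),(5) give $\nu(P_{\sigma}(\phi^{*}L)) < \dim W$, and the standard characterization of $\nu$ for pseudo-effective divisors forces $\vol_{\widetilde{W}}(P_{\sigma}(\phi^{*}L)|_{\widetilde{W}}) = 0$ for very general complete-intersection cuts $W$. The subtler bound $\nu_{X|V}(L) \leq \nu(L|_{V})$ is where I expect the main obstacle. My approach is to use that $N_{\sigma}(\phi^{*}L)|_{\widetilde{W}}$ is effective for $W$ avoiding divisorial base loci, giving
\[\vol_{\widetilde{W}}\bigl(P_{\sigma}(\phi^{*}L)|_{\widetilde{W}}\bigr) \leq \vol_{\widetilde{W}}\bigl((\phi^{*}L)|_{\widetilde{W}}\bigr).\]
Choosing $\phi$ to factor through a smooth model of $V$ makes $(\phi^{*}L)|_{\widetilde{V}}$ a pull-back of $L|_{V}$, so the right-hand side computes volumes of restrictions on a birational model of $V$. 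Applying Proposition \ref{mapscomputingnumdim} to $L|_{V}$ then bounds the maximal such $\dim W$ by $\nu(L|_{V})$. The technical heart is to show that a cofinal system of $\phi: Y \to X$ restricts to a cofinal system of models of $V$ suitable for computing the numerical dimension of $L|_{V}$; this is where one must be careful about exceptional centers meeting $V$ non-trivially.

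For (4), invariance under birational pull-back is a cofinality argument. Any further birational map $\psi: Y' \to Y$ used in computing $\nu_{Y|V'}(\phi^{*}L)$ yields a map $\phi \circ \psi: Y' \to X$ valid for computing $\nu_{X|V}(L)$, and conversely any birational map to $X$ not contracting $V$ can be dominated by one factoring through $\phi$ without contracting $V'$. Since strict transforms agree and $P_{\sigma}(\psi^{*}\phi^{*}L)$ is unchanged numerically, the two infima over models and maxima over $W$ compute the same invariant.
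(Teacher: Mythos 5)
This theorem is cited from \cite{lehmann10}, Theorem 7.4, and the present paper does not reprove it; there is no proof in the paper to compare your argument against. Evaluating your sketch on its own merits, several steps need repair.

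For property (1) your reasoning is sound: the set of admissible pairs $(\phi, W)$ in Definition \ref{restnumdimdefn} is independent of $L$, and both $P_{\sigma}$ and the volume are numerical invariants, so the infimum and maximum preserve numerical invariance.

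Property (2) is where the gaps lie. For $\nu_{X|V}(L) \leq \nu(L)$ you invoke ``the standard characterization of $\nu$'' to conclude that $\nu(P_{\sigma}(\phi^{*}L)) < \dim W$ forces vanishing of $\vol_{\widetilde{W}}(P_{\sigma}(\phi^{*}L)|_{\widetilde W})$. But the link between the numerical dimension (defined via numerical domination of subsets in Definition \ref{numdimdefn}) and the vanishing of volumes of restrictions to very general complete intersections is itself a nontrivial theorem --- precisely the kind of result \cite{lehmann10} establishes --- so leaning on it here is either circular or leaves the real work unstated. For $\nu_{X|V}(L) \leq \nu(L|_{V})$ the inequality
\[
\vol_{\widetilde{W}}\bigl(P_{\sigma}(\phi^{*}L)|_{\widetilde{W}}\bigr) \leq \vol_{\widetilde{W}}\bigl((\phi^{*}L)|_{\widetilde{W}}\bigr)
\]
points in the wrong direction. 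The quantity you must control is an \emph{infimum} over $\phi$ on the left; an upper bound by $\vol((\phi^{*}L)|_{\widetilde W})$ does not descend to the infimum, and in any case $\vol((\phi^{*}L)|_{\widetilde W})$ being positive only reflects bigness of $L|_{W}$, which bears no relation to $\nu(L|_{V})$ when $L$ has a large negative part along $V$. The actual argument has to track how the divisorial Zariski decomposition of $\phi^{*}L$ interacts with restriction to $\widetilde V$ on a cofinal system of models --- exactly the issue you flag as the ``technical heart'' but leave unresolved.

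For property (3) the identity $P_{\sigma}(\phi^{*}L) \equiv P_{\sigma}(\phi^{*}P_{\sigma}(L))$ is correct but not immediate: $\phi^{*}N_{\sigma}(L)$ may acquire exceptional components, and you need a Nakayama-type statement (compare \cite{nakayama04} III.5) that $\sigma_{\Gamma}(\phi^{*}L) \geq \mult_{\Gamma}(\phi^{*}N_{\sigma}(L))$ for every prime $\Gamma$ before you can subtract it off. Property (4) by cofinality is the right idea and the least problematic part.
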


Finally, we will need a criterion for the vanishing of $\nu(L)$.

\begin{lem}[\cite{lehmann10}, Proposition 7.6] \label{nuvanishingcriterion}
Let $X$ be a smooth variety and let $L$ be a pseudo-effective $\mathbb{R}$-divisor on $X$.  Then $\nu(L) \geq 1$ iff there is a curve $C$ that is a very general complete intersection of very ample divisors such that $\nu_{X|C}(L) = 1$.
\end{lem}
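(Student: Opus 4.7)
The reverse implication is immediate: by Theorem \ref{restnumdimproperties}(2), $\nu(L) \geq \nu_{X|C}(L) = 1$ whenever such a $C$ exists.

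For the forward implication, suppose $\nu(L) \geq 1$; by Theorem \ref{numdimproperties}(3) this is equivalent to $P_{\sigma}(L) \not\equiv 0$. The plan is to produce a very general complete intersection curve $C$ with $\nu_{X|C}(L) = 1$, noting that the upper bound $\nu_{X|C}(L) \leq \dim C = 1$ is automatic from the definition, so only $\nu_{X|C}(L) \geq 1$ needs verification. I would first invoke Proposition \ref{mapscomputingnumdim} to fix a single countable sequence of birational models $\phi_m: \widetilde{X}_m \to X$ computing every restricted numerical dimension, and then choose $C$ very general enough to avoid, simultaneously for every $m$, the codimension-$\geq 2$ images in $X$ of all $\phi_m$-exceptional divisors; this is allowed because only countably many conditions are imposed.

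For such $C$, the strict transform $\widetilde{C}_m \subset \widetilde{X}_m$ is disjoint from every $\phi_m$-exceptional divisor. Decompose $N_{\sigma}(\phi_m^{*}L) = N_m^{\mathrm{hor}} + N_m^{\mathrm{exc}}$, where $N_m^{\mathrm{hor}}$ is the strict transform of $N_{\sigma}(L)$ --- the coefficients match because $\sigma_{\Gamma}$ is a birational invariant along a non-exceptional prime $\Gamma$ --- and $N_m^{\mathrm{exc}}$ is the $\phi_m$-exceptional remainder. Then $N_m^{\mathrm{exc}} \cdot \widetilde{C}_m = 0$ by disjointness, and the projection formula gives
\begin{equation*}
P_{\sigma}(\phi_m^{*}L) \cdot \widetilde{C}_m = \phi_m^{*}L \cdot \widetilde{C}_m - N_m^{\mathrm{hor}} \cdot \widetilde{C}_m = L \cdot C - N_{\sigma}(L) \cdot C = P_{\sigma}(L) \cdot C,
\end{equation*}
independent of $m$. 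Hence $\nu_{X|C}(L) \geq 1$ reduces to the single numerical condition $P_{\sigma}(L) \cdot C > 0$.

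The remaining, and principal, task is to guarantee $P_{\sigma}(L) \cdot C > 0$ for $C$ a very general complete intersection of very ample divisors. Because $P_{\sigma}(L)$ is a nonzero pseudo-effective class, the duality of movable curves and pseudo-effective divisors (Theorem \ref{bdppmovablecurvetheorem}) furnishes a movable curve class $\alpha$ with $P_{\sigma}(L) \cdot \alpha > 0$. I would then show that complete intersection classes $H_1 \cdots H_{n-1}$ for sufficiently positive very ample $H_i$ lie in the interior of the movable cone --- equivalently, pair strictly positively with every nonzero pseudo-effective class --- so that they detect the positivity of $P_{\sigma}(L)$. This strict-positivity input is the main technical obstacle and likely requires either a density/interior argument in the movable cone or a direct analysis via the divisorial Zariski decomposition of $P_{\sigma}(L)$ itself.
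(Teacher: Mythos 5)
The paper cites this lemma from \cite{lehmann10}, Proposition 7.6, and does not reproduce a proof, so there is no in-paper argument to compare against; I assess your proposal on its own terms.

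Your reduction is sound. The reverse implication is immediate from Theorem \ref{restnumdimproperties}. For the forward implication, fixing the models $\phi_m$ of Proposition \ref{mapscomputingnumdim} and choosing $C$ very general so that every strict transform $\widetilde{C}_m$ is disjoint from the $\phi_m$-exceptional divisors is legitimate: each exceptional locus has image of codimension $\geq 2$ in $X$ because $X$ is normal, and there are only countably many constraints. Then $\phi_m$ is an isomorphism on a neighborhood of $\widetilde{C}_m$, the non-exceptional part of $N_\sigma(\phi_m^{*}L)$ is the strict transform of $N_\sigma(L)$ because $\sigma_\Gamma$ depends only on the divisorial valuation, and your intersection computation goes through, so $\nu_{X|C}(L) \geq 1$ does reduce to the single inequality $P_\sigma(L) \cdot C > 0$. (Combined with the trivial bound $\nu_{X|C}(L) \leq \dim C$, this gives $\nu_{X|C}(L) = 1$.)

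The step you flag at the end as ``the main technical obstacle'' is a genuine gap in your write-up, but it is not hard to close and should not be left as a conjecture. Take a very ample $H$ and let $S = H_{1} \cap \cdots \cap H_{n-2}$ be a very general complete intersection surface. Then $P_\sigma(L)|_{S}$ is pseudo-effective (restrict the big classes $P_\sigma(L) + \epsilon H$ and let $\epsilon \downarrow 0$), and it is nonzero: if $D|_{S} \equiv 0$ then both $D \cdot H^{n-1} = 0$ and $D^{2}\cdot H^{n-2} = 0$, which forces $D \equiv 0$ by the Hodge index theorem for the Lorentzian form $(A,B) \mapsto A\cdot B \cdot H^{n-2}$ on $N^{1}(X)$. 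On a projective surface, an ample class pairs strictly positively with every nonzero pseudo-effective class, since the ample cone is the interior of the dual of the salient cone $\overline{NE}^{1}(S)$. Hence $P_\sigma(L)\cdot C = \bigl(P_\sigma(L)|_{S}\bigr)\cdot \bigl(H_{n-1}|_{S}\bigr) > 0$. With this inserted your argument is a complete proof.
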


\section{Generic Quotients} \label{genericquotientmapssection}

Let $X$ be a normal variety.  Suppose that we are given a family of subschemes of $X$, that is, a family of schemes $s: U \to V$ with a morphism $w: U \to X$ realizing the fibers of $s$ as subschemes of $X$.  In general it is not possible to construct a morphism $\pi: X \to Z$ that contracts all the members of the family.  However, as was first demonstrated in \cite{campana81}, under some mild constraints we can define such a quotient on an open subset of $X$.  In this section we review how the quotient theory can be used to construct a ``generic quotient'' by a family of subschemes.

Suppose that we have a finite collection of tuples $\{(U_{i},V_{i},s_{i},w_{i})\}$ where $s_{i}: U_{i} \to V_{i}$ is a family of quasi-projective schemes and $w_{i}: U_{i} \to X$ is a morphism.  Given such data, \cite{kollar96} defines a constructible subset $\langle U_{1},\ldots,U_{k} \rangle^{\sim} \subset X \times X$ such that two points $x_{1},x_{2}$ can be connected by a sequence of members of our families iff $(x_{1},x_{2}) \in \langle U_{1},\ldots,U_{k} \rangle^{\sim}$.  We would like to quotient $X$ by this relation.  There are two versions of the quotient: one for open families and one for proper families.

\begin{thrm}[\cite{kollar96},IV.4.13 and IV.4.16] \label{openquotienttheorem}
Let $X$ be a normal variety.  Suppose that $\{ (U_{i},V_{i},s_{i},w_{i}) \}$ is a finite collection of families of quasi-projective subschemes on $X$ such that either
\begin{itemize}
\item each $w_{i}$ is open and each $s_{i}$ is flat with irreducible fibers, or
\item each $w_{i}$ is proper and each $s_{i}$ is proper with connected fibers.
\end{itemize}
Then there is an open subvariety $X^{0} \subset X$, a quasi-projective variety $Z^{0}$, and a morphism $\pi: X^{0} \to Z^{0}$ with connected fibers such that
\begin{enumerate}
\item $\langle U_{1},\ldots,U_{m} \rangle^{\sim}$ restricts to an equivalence relation on $X^{0}$.
\item For every $z \in Z^{0}$ the closure of $\pi^{-1}(z)$ in $X$ coincides with the closure
of a suitable $\langle U_{1},\ldots,U_{m} \rangle^{\sim}$-equivalence class.
\end{enumerate}
Furthermore, in the proper case $\pi$ is a proper map, so that by condition (2) each fiber of $\pi$ coincides with an equivalence class.
\end{thrm}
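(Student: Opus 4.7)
The plan is to construct the chain relation $\langle U_{1},\ldots,U_{m}\rangle^{\sim}$ as a constructible subset of $X \times X$ and then produce the quotient map by parametrizing equivalence classes inside a suitable Chow scheme. I would proceed in two phases.

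Phase one: assemble the chain relation. For each family $(s_{i}: U_{i} \to V_{i},\, w_{i}: U_{i} \to X)$, form the fiber product $U_{i} \times_{V_{i}} U_{i}$, which parametrizes pairs of points lying in a common fiber of $s_{i}$. Its image under $(w_{i},w_{i})$ is a relation $R_{i} \subset X \times X$ — constructible in the open/flat case and closed in the proper case. The chain relation is the reflexive-symmetric hull of $\bigcup_{i} R_{i}$ iteratively composed with itself. To see that this iteration stabilizes after finitely many steps, note that every fiber $s_{i}^{-1}(v)$ is a subscheme of $X$ drawn from a bounded family (parametrized by a component of the Chow or Hilbert scheme of $X$), so each equivalence class is a finite union of such subschemes. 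Noetherianity of the relevant components of the Chow scheme, together with a dimension count, forces stabilization at some uniform composition length, so $\langle U_{1},\ldots,U_{m}\rangle^{\sim}$ is constructible (closed in the proper case).

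Phase two: build the quotient. Apply generic flatness to the first projection $p_{1}: \langle U_{1},\ldots,U_{m}\rangle^{\sim} \to X$ to find a dense open $X^{0} \subset X$ on which the fibers $E_{x} := p_{1}^{-1}(x)$ are equidimensional of constant dimension. After further shrinking, these fibers are precisely the equivalence classes, so the restricted relation on $X^{0} \times X^{0}$ is a genuine equivalence relation. The assignment $x \mapsto [\overline{E_{x}}]$ defines a morphism from $X^{0}$ to the Chow scheme of $X$; its image is constructible, and after replacing it by a locally closed subscheme, normalizing, and shrinking $X^{0}$ if necessary, one obtains a normal variety $Z^{0}$ and a morphism $\pi: X^{0} \to Z^{0}$ whose fibers agree with the equivalence classes. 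Connectedness of the fibers is automatic from the chain-theoretic definition. In the proper case the parametrizing map lands in a proper component of the Chow scheme, which then forces $\pi$ itself to be proper.

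The main obstacle is the constructibility and finite stabilization of the iterated composition: one must show that arbitrarily long chains contribute no new equivalences beyond a fixed length, which is where the boundedness and Noetherianity of the Chow-scheme parametrization is essential. A secondary subtlety is the gap between the set-theoretic chain relation and a scheme-theoretic equivalence relation admitting a categorical quotient; this is exactly what compels us to pass to the open subvariety $X^{0}$, and it is the combination of generic flatness with the universal property of the Chow scheme that turns the constructible relation into an actual morphism of varieties.
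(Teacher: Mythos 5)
This theorem is not proved in the paper; it is imported verbatim from Koll\'{a}r's book (IV.4.13 and IV.4.16), so there is no ``paper's own proof'' to compare against. Your general strategy---form the iterated chain relation in $X\times X$, stabilize it, restrict to an open set where it is an equivalence relation, and parametrize equivalence classes by a map to a Chow variety---is indeed the shape of the standard argument, but several of the steps you assert are precisely the hard content of Koll\'{a}r's proof and cannot be waved through.

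The most serious gap is the stabilization claim in Phase one. You assert that ``Noetherianity of the relevant components of the Chow scheme, together with a dimension count, forces stabilization at some uniform composition length.'' But an increasing union of constructible subsets of $X\times X$ need not stabilize, even if every term lives in a bounded family---consider a strictly increasing sequence of finite subsets of a curve. The closures $\overline{R_{n}}$ do stabilize by Noetherianity, but that does not give $R_{n}=R_{n+1}$. Koll\'{a}r's actual argument is pointwise and generic: for a very general $x$, the dimension of the closure of the set of points reachable from $x$ in $\leq n$ steps is nondecreasing and bounded by $\dim X$, hence eventually constant, and one then argues that longer chains stay inside that closure. The price is that ``very general'' means one must excise a \emph{countable} union of proper closed subsets, and careful bookkeeping is needed to extract an actual Zariski-open $X^{0}$ on which a bounded-length relation generates the whole thing. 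You also cannot simply run generic flatness on $p_{1}:\langle U_{1},\ldots,U_{m}\rangle^{\sim}\to X$ until stabilization is in hand, since the object is not yet a scheme (or even manifestly constructible).

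A second gap is the identification, after shrinking, of $p_{1}^{-1}(x)$ with the equivalence class of $x$; this is the crux of getting from a constructible relation to a genuine equivalence relation with a quotient, and it again uses the stabilization and a symmetry/transitivity argument that does not follow from equidimensionality of fibers alone. Finally, your treatment of the proper case is backwards: properness of $\pi$ does not follow from the parametrizing map ``landing in a proper component of the Chow scheme''---Chow components for a projective $X$ are always proper. What makes the proper case work is that properness of $w_{i}$ and $s_{i}$ forces each $n$-step relation to be closed, so that the (stabilized) equivalence classes are themselves closed in $X$, and hence the quotient map is proper. Your remark that connectedness is ``automatic'' is also doing real work in the open case, where irreducibility of the fibers of $s_{i}$ (not merely connectedness) is what keeps the generic equivalence class from splitting; that hypothesis is there for a reason and deserves to be used explicitly.
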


Although the proper version of the quotient map is more commonly used, the open version is better suited to isolate the generic behavior of the families.  Thus, we will define a generic quotient using the open version of the quotient.  This viewpoint is similar to the notion of ``maximal foliations'' in \cite{eckl05}.

We will only construct the generic quotient of families $\{ (U_{i},V_{i},s_{i},w_{i}) \}$ for which the $w_{i}: U_{i} \to X$ are proper and surjective and the $U_{i}$ are irreducible.  These assumptions mean that the generic quotient accurately reflects the geometry of our families.  (In view of our intended application, these requirements are also justified by Example \ref{necessaryconditions}.)

\begin{constr} \label{genericquotientconstruction}
Let $X$ be a normal variety and let $\{ (U_{i},V_{i},s_{i},w_{i}) \}$ be a finite collection of surjective proper families of subschemes such that $U_{i}$ is irreducible.  Let $U$ be the largest open subset of $X$ such that each $w_{i}$ is flat over $U$ and let $V_{i}^{0}$ be the largest open subset of $V_{i}$ so that $s_{i}$ is flat with irreducible fibers over $V_{i}^{0}$.   We then define $U_{i}^{0} =  w_{i}^{-1}(U) \cap s_{i}^{-1}(V_{i}^{0})$.  Since flat maps are open, we can apply Theorem \ref{openquotienttheorem} to the families of quasi-projective schemes $\{ (U_{i}^{0},V_{i}^{0},s_{i}|_{U_{i}^{0}},w_{i}|_{U_{i}^{0}}) \}$.  We call the resulting quotient  $\pi: X \dashrightarrow Z$ the generic quotient of $X$ by the families $U_{i}$.
\end{constr}

\begin{rmk} \label{infinitefamilies}
We can define the generic quotient of an infinite set of families using the following well-known argument.  Suppose we are given an infinite set $\{ (U_{\alpha},V_{\alpha},s_{\alpha},w_{\alpha}) \}_{\alpha \in A}$ of proper surjective families of subschemes such that $U_{\alpha}$ is irreducible.  For a given finite subset $I \subset A$, we let $\pi_{I}: X \dashrightarrow Z_{I}$ denote the generic quotient defined by the corresponding $U_{i}$. We then define the generic quotient of the $U_{\alpha}$ to be any quotient $\pi_{I}$ such that $Z_{I}$ has the smallest possible dimension.  It is easy to check that the resulting map is unique up to birational equivalence.
\end{rmk}

For the remainder of the section we will study the birational properties of the generic quotient.  We begin by defining the notion of a strict transform family.  Let $(U,V,s,w)$ be a proper family of subschemes of $X$.  Suppose that $\phi: Y \to X$ is a birational morphism such that $w(U)$ is not contained in the center of any $\phi$-exceptional divisor.  Let $U'$ be the normalization of the unique component of $U \times_{X} Y$ that dominates $w(U)$.  We have a natural map $s' = s \circ f: U' \to V$ and a projection $w': U' \to Y$.  We define the strict transform family to be $(U',V,s',w')$.

\begin{prop}
Let $X$ be a normal variety and let $\{ (U_{i},V_{i},s_{i},w_{i}) \}$ be a finite collection of surjective proper families of subschemes such that $U_{i}$ is irreducible.  Let $\psi: X' \to X$ be a birational map such that $w_{i}': U_{i}' \to X'$ is flat for each strict transform family $(U_{i}',V_{i},s_{i}',w_{i}')$.  Then the generic quotient of $X$ by the $U_{i}$ is birationally equivalent to the proper quotient of $X'$ by the $U_{i}'$.
\end{prop}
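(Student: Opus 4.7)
The plan is to identify an open subset of $X$ on which both quotient maps can be compared via the isomorphism $\psi$, and then to conclude by the uniqueness of the quotient map up to birational equivalence.

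First, I would fix an open subset $X^{\circ} \subset X$ such that $\psi$ restricts to an isomorphism $\psi^{-1}(X^{\circ}) \to X^{\circ}$; write ${X'}^{\circ} := \psi^{-1}(X^{\circ})$.  Under this identification, the strict transform family $U_{i}'$ restricted over ${X'}^{\circ}$ is canonically identified with $U_{i}$ restricted over $X^{\circ}$.  Since by hypothesis $w_{i}': U_{i}' \to X'$ is flat, it follows that each $w_{i}$ is flat over $X^{\circ}$, so $X^{\circ}$ is contained in the open subset $U$ of Construction \ref{genericquotientconstruction}.  After further shrinking $X^{\circ}$, I may assume that the generic quotient $\pi: X \dashrightarrow Z$ of $X$ by the $U_{i}$ restricts to a morphism on $X^{\circ}$.

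Next, the proper case of Theorem \ref{openquotienttheorem} applied to $X'$ with the families $U_{i}'$ produces a proper quotient morphism $\pi': X' \to Z'$ whose fibers coincide with closures of equivalence classes of the constructible relation $\langle U_{1}', \ldots, U_{k}' \rangle^{\sim} \subset X' \times X'$.  Composing with $\psi^{-1}$ gives a rational map $\rho := \pi' \circ \psi^{-1}: X \dashrightarrow Z'$, and I would like to show that $\rho$ and $\pi$ are birationally equivalent.

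The comparison proceeds by analyzing the fibers over $X^{\circ} \cong {X'}^{\circ}$.  Under the identification of families, the restriction of $\langle U_{1}', \ldots, U_{k}' \rangle^{\sim}$ corresponds to the relation $\langle U_{1}, \ldots, U_{k} \rangle^{\sim}$ on $X^{\circ}$, generated by members of the full families.  On the other hand, the fibers of $\pi|_{X^{\circ}}$ arise from the open version of Theorem \ref{openquotienttheorem} applied to the subfamilies $U_{i}^{0} = w_{i}^{-1}(X^{\circ}) \cap s_{i}^{-1}(V_{i}^{0})$, which use only members over the open dense subset $V_{i}^{0} \subset V_{i}$ where $s_{i}$ is flat with irreducible fibers.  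Because each $V_{i}^{0}$ is dense in $V_{i}$ and $s_{i}$ is proper, the closure of the ``open'' constructible relation $\langle U_{1}^{0}, \ldots, U_{k}^{0} \rangle^{\sim}$ in $X \times X$ contains the ``proper'' relation $\langle U_{1}, \ldots, U_{k} \rangle^{\sim}$.  After shrinking $X^{\circ}$ once more to a dense open subset on which both relations restrict to equivalence relations with matching equivalence classes, one concludes that $\pi|_{X^{\circ}}$ and $\rho|_{X^{\circ}}$ have the same fibers, and hence that they are birationally equivalent.

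The main obstacle is this last step: verifying that the equivalence classes of the two relations coincide generically.  A priori, chains of ``special'' members lying over $V_{i} \setminus V_{i}^{0}$ could connect generic points of $X$ that are not connected by any chain of generic members.  The resolution uses the irreducibility of each $U_{i}$ together with the density of $V_{i}^{0}$ in $V_{i}$: a specialization argument in the constructible set $\langle U_{1}, \ldots, U_{k} \rangle^{\sim}$ shows that any connection provided by special members is a limit of connections by generic members, which suffices to ensure agreement over a suitable dense open subset.
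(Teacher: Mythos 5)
You have correctly identified the crux of the proposition: one must show that chains of ``special'' members (over $V_i \setminus V_i^0$) do not identify generic points that remain separated by the open relation. But your proposed resolution --- a specialization argument invoking only the irreducibility of $U_i$ and the density of $V_i^0$ in $V_i$ --- cannot work as stated, because it never uses the crucial flatness hypothesis on the strict transform maps $w_i'$. Irreducibility of $U_i$ and density of $V_i^0$ also hold for the original families on $X$, yet Example \ref{bdppexample} shows that the proper quotient on $X$ can contract strictly more than the generic quotient. So the claimed inclusion of the proper relation inside the closure of the open relation is false in the absence of flatness, and your argument has a genuine gap exactly where you flagged the main obstacle.

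The paper closes this gap by using flatness directly: since $w_i'$ is flat (hence open), one has $w_i'^{-1}(\overline{S}) = \overline{w_i'^{-1}(S)}$ for any subset $S \subset X'$. Taking $S$ to be an open-relation equivalence class, it follows that if any member $w_i'(F)$ --- including one over a special point of $V_i$ --- meets $\overline{S}$, then $F$ meets $\overline{w_i'^{-1}(S)}$, which is a union of $s_i'$-fibers, and hence $w_i'(F) \subset \overline{S}$. This shows that the closures of the open equivalence classes are already saturated for the proper relation, which is precisely what your specialization argument needs to supply but does not. (The paper also compares the two quotients entirely on $X'$, interpreting the open subfamilies $U_i^0$ as living on $X'$ via the flat locus; this sidesteps some subtleties in your passage between $X^{\circ}$ and ${X'}^{\circ}$ concerning chains that exit the open set.)
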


Note that the map $\psi$ in the statement is guaranteed to exist by Theorem \ref{hironakaflattening}.

\begin{proof}
As before we let $U$ be the largest open region of $X$ such that each $w_{i}$ is flat over $U$ and define $U_{i}^{0}$ as in Construction \ref{genericquotientconstruction}.

We start by proving this in a special case: we construct a birational map $\phi: X' \to X$ by flattening each $w_{i}$ in turn using Theorem \ref{hironakaflattening}.  Since flatness is invariant under pull-back, every strict transform family $w_{i}': U_{i}' \to X'$ is flat.  Note that $\phi^{-1}: X \dashrightarrow X'$ is an isomorphism on $U$.  Thus we may interpret the families $U_{i}^{0}$ as lying on $X'$ and the open quotient of $X'$ by the $U_{i}^{0}$ is the same as the generic quotient.

Let $\pi: X' \dashrightarrow Z$ denote the open quotient of $X'$ by the $U_{i}^{0}$ and let $\tau: X' \dashrightarrow T$ denote the proper quotient of $X'$ by the $U_{i}'$.  Since $U_{i}^{0} \subset U_{i}'$, the universal property of the quotient shows that $\tau$ factors birationally through $\pi$.  Conversely, since $w_{i}'$ is flat, we have
\begin{equation*}
w_{i}'^{-1}(\overline{S}) = \overline{w_{i}'^{-1}(S)}
\end{equation*}
for any subset $S \subset X$.  Letting $S$ be a $\langle U_{0}^{0}, \ldots, U_{m}^{0} \rangle$-equivalence class, we see that if a fiber $F$ of $s_{i}': U_{i}' \to V_{i}$ has that $w_{i}'(F)$ intersects $\overline{S}$ then $F$ intersects $\overline{w_{i}'^{-1}(S)}$.  But this latter set is a union of fibers of $s_{i}'$, showing that $w_{i}'(F)$ is contained in $\overline{S}$.  Thus $\pi$ factors birationally through $\tau$, showing they are birationally equivalent.

Finally, we prove the theorem in general.  Suppose that $\psi: X' \to X$ is a birational map as in the statement and $\mu: \widehat{X} \to X'$ is any higher birational model.  Since $w_{i}'$ is flat, the induced map $U_{i}' \times_{X'} \widehat{X} \to \widehat{X}$ is flat (and hence open) on each irreducible component.  However, only one component of $U_{i}' \times_{X'} \widehat{X}$ dominates $\widehat{X}$.  Thus, the strict transform family $\widehat{U}_{i}$ on $\widehat{X}$ is equal to $U_{i}' \times_{X'} \widehat{X}$.  As a consequence, the proper quotient of $U_{i}'$ on $X'$ is the same as the proper quotient of $\widehat{U}_{i}$ on $\widehat{X}$.  In particular, by passing to higher models we see that any two choices of $\psi$ yield the same reduction map.
\end{proof}

An immediate corollary is that the generic quotient is invariant under passing to strict transform families.  Of course, this result does not depend on the finiteness of the collection of families.

\begin{cor} \label{genericquotientbirationalinvariance}
Let $X$ be a normal variety and let $\{ (U_{i},V_{i},s_{i},w_{i}) \}$ be a collection of surjective proper families of subschemes such that $U_{i}$ is irreducible.  Suppose that $\phi: Y \to X$ is a birational map and denote the strict transform families by $(U_{i}',V_{i},s_{i}',w_{i}')$.  The generic quotient of the $U_{i}$ on $X$ is birationally equivalent to the generic quotient of the $U_{i}'$ on $Y$.  
\end{cor}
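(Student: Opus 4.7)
The plan is to deduce the corollary from the preceding proposition by passing to a common birational model on which both strict transform families are flattened.

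First I would reduce to the case of a finite collection. By Remark \ref{infinitefamilies}, the generic quotient of an infinite collection $\{(U_\alpha,V_\alpha,s_\alpha,w_\alpha)\}_{\alpha\in A}$ is defined as the generic quotient of a finite subcollection $I\subset A$ realizing the minimal target dimension. Since strict transformation is indexed by the same set $A$, once the corollary is known for every finite subcollection, matching finite subcollections on $X$ and $Y$ produce birationally equivalent generic quotients and hence have the same target dimension; a common minimizer then realizes the generic quotient on both sides.

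For the finite case, I would apply Theorem \ref{hironakaflattening} iteratively (followed by normalization) to produce a birational morphism $\rho: Z \to Y$ such that each strict transform family $(\widehat{U}_i,V_i,\widehat{s}_i,\widehat{w}_i)$ of $(U_i',V_i,s_i',w_i')$ under $\rho$ has $\widehat{w}_i$ flat over $Z$. The preceding proposition applied to $\rho: Z \to Y$ identifies the generic quotient of $Y$ by the $U_i'$ with the proper quotient of $Z$ by the $\widehat{U}_i$. I would then observe that the same $\widehat{U}_i$ arises as the strict transform of the original family $U_i$ on $X$ under the composition $\phi\circ\rho: Z \to X$, so the preceding proposition applied to $\phi\circ\rho$ identifies the generic quotient of $X$ by the $U_i$ with the same proper quotient of $Z$ by the $\widehat{U}_i$. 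The birational equivalence of the two generic quotients follows.

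The only real obstacle is the transitivity assertion in the previous paragraph. It amounts to checking that the normalization of the unique component of $U_i\times_X Z$ dominating $w_i(U_i)$ coincides with the strict transform under $\rho$ of the strict transform of $U_i$ under $\phi$. Because $U_i'$ is itself constructed by extracting and normalizing the dominant component of $U_i\times_X Y$, this follows from the universal property of normalization together with the observation that both two-step constructions extract precisely the unique irreducible component of $U_i\times_X Z$ dominating $Z$'s image, with the remaining components absorbed at the normalization steps. It is routine but the only place where care is needed.
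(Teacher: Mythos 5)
Your argument is correct and is precisely the ``immediate'' argument the paper has in mind: choose a model $Z$ dominating $Y$ (hence also $X$) on which all the relevant strict transform families are flat, then apply the preceding proposition twice — once to $\rho\colon Z\to Y$ and once to $\phi\circ\rho\colon Z\to X$ — to identify both generic quotients with the proper quotient of $Z$ by the $\widehat{U}_i$. The transitivity of strict transforms that you flag is the one point the paper silently glosses over; your appeal to the universal property of normalization (the normalization of a variety is initial among finite birational maps from normal varieties to it, so the two ways of extracting the normal, irreducible, dominant model of $U_i\times_X Z$ agree) is the right way to close that gap, and the reduction of the infinite case to the finite case via Remark~\ref{infinitefamilies} is handled correctly.
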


Another consequence is that the generic quotient is the ``minimal'' possible proper quotient over all strict transform families.

\begin{prop} \label{genericquotientdominance}
Let $X$ be a normal variety and let $\{ (U_{i},V_{i},s_{i},w_{i}) \}$ be a collection of surjective proper families of subschemes such that $U_{i}$ is irreducible.  Let $\pi: X \dashrightarrow Z$ denote the generic quotient.  Suppose that $\phi: Y \to X$ is any birational map and let $\pi': Y \dashrightarrow Z'$ be the proper quotient of $Y$ by the strict transform families $U_{i}'$.  Then $\pi'$ factors birationally through $\pi$.
\end{prop}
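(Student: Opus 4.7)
My plan is to reduce the statement to a comparison of equivalence relations on a single variety, and then read off the factoring directly from Theorem~\ref{openquotienttheorem}.

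First I would apply Corollary~\ref{genericquotientbirationalinvariance} to transfer the generic quotient from $X$ to $Y$: it tells us that $\pi$ is birationally equivalent to the generic quotient on $Y$ by the strict transform families $\{U_i'\}$. So it suffices to show that the proper quotient $\pi'$ on $Y$ factors birationally through this generic quotient on $Y$. For the remainder I relabel and assume $X = Y$, so that $\pi$ and $\pi'$ are respectively the generic and proper quotient of $X$ by the same collection $\{(U_i,V_i,s_i,w_i)\}$.

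Next I would compare the two equivalence relations on $X$. Recall from Construction~\ref{genericquotientconstruction} that $\pi$ is obtained by applying the open version of Theorem~\ref{openquotienttheorem} to the subfamilies $(U_i^0, V_i^0, s_i|_{U_i^0}, w_i|_{U_i^0})$, where $U_i^0 = w_i^{-1}(U) \cap s_i^{-1}(V_i^0)$. For $v \in V_i^0$ the corresponding open-family fiber $w_i(s_i^{-1}(v)) \cap U$ is a subset of the proper-family fiber $w_i(s_i^{-1}(v))$, so every generating pair of the open relation is also a generating pair of the proper relation. Taking transitive closures yields the inclusion $\langle U_1^0,\ldots,U_k^0 \rangle^{\sim} \cap (U \times U) \subseteq \langle U_1,\ldots,U_k \rangle^{\sim}$.

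Finally I would extract the factoring. Let $X^0$ be the open locus on which $\pi$ is a morphism (necessarily inside $U$), and let $X^{00} \subset X^0$ be a further dense open on which $\pi'$ is also defined. By Theorem~\ref{openquotienttheorem}(2), for a general $z \in Z$ the intersection $\pi^{-1}(z) \cap X^{00}$ is contained in a single $\langle U_i^0 \rangle^{\sim}$-class, hence by the inclusion above in a single $\langle U_i \rangle^{\sim}$-class, which in the proper case is itself a fiber of $\pi'$. Consequently $\pi'|_{X^{00}}$ is constant along fibers of $\pi|_{X^{00}}$, so it descends to a rational map $\psi \colon Z \dashrightarrow Z'$ satisfying $\pi' = \psi \circ \pi$ on $X^{00}$; dominance of $\pi'$ then forces $\psi$ to be dominant. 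The main point requiring care is ensuring that a general fiber of $\pi$ really lies in a single equivalence class of the proper relation (rather than straddling several), which is where one must exploit generality of $z$ to avoid the closure subtleties in Theorem~\ref{openquotienttheorem}(2).
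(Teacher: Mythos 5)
Your proposal is correct, but it takes a genuinely different route than the paper's own argument. The paper first chooses a model $X' \to Y$ that flattens all the strict transform maps $w_i'$, then invokes the proposition preceding Corollary~\ref{genericquotientbirationalinvariance} to identify the generic quotient with the \emph{proper} quotient of $X'$ by the strict transform families; meanwhile $\pi'$ pulls back to the proper quotient of $X'$ by the full fibered products $U_i' \times_Y X'$, which contain those strict transforms as one component among possibly several. So the paper reduces everything to comparing two \emph{proper} quotients on the same model, where fibers literally coincide with equivalence classes (by the final clause of Theorem~\ref{openquotienttheorem}), and the factoring follows simply from one collection of generating families being contained in the other. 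You instead reduce to $X=Y$ via Corollary~\ref{genericquotientbirationalinvariance} and directly compare the \emph{open} equivalence relation $\langle U_i^0\rangle^\sim$ with the proper one $\langle U_i\rangle^\sim$. That containment of relations is correct, but it leaves you working with the open case of Theorem~\ref{openquotienttheorem}, where part (2) only identifies the \emph{closure} of a fiber with the closure of an equivalence class; you flag this yourself as "the main point requiring care," and indeed it is not discharged in your writeup. The paper's flattening step is precisely what sidesteps this issue: after flattening, both quotients are of proper type, so the equivalence-class-equals-fiber statement is available on the nose and no closure argument is needed. Your approach is salvageable (the needed statement that open-quotient fibers are genuine equivalence classes over the open locus does hold in Koll\'ar's formulation), but as written it carries an unresolved gap exactly where the paper's detour through Theorem~\ref{hironakaflattening} makes the argument clean.
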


\begin{proof}
Let $\psi: X' \to Y$ be a birational map that simulateously flattens the strict transform maps $w': U_{i}' \to Y$.  Note that $\pi'$ is birationally equivalent to the proper quotient associated to the families $(U_{i}' \times_{Y} X', V_{i}, s_{i}' \circ p_{1,i}, p_{2,i})$ on $X'$.   $U_{i}' \times_{Y} X'$ has one component representing the strict transform family but may have additional components.  Thus the proper quotient of $X'$ by the entire families $U_{i}' \times_{Y} X'$ quotients at least as much as the generic quotient.
\end{proof}

\section{$L$-trivial Reduction Map} \label{ltrivialreductionmapsection}

Suppose that $L$ is a pseudo-effective divisor on $X$.  By analogy with the nef reduction map, one could define a reduction map for $L$ by taking the proper quotient of $X$ by all families of movable curves $C$ with $L \cdot C = 0$.  However, as demonstrated in \cite{bdpp04}, the resulting quotient does not have good properties.  A better approach is to take the generic quotient of $X$ by all families of movable curves satisfying $L \cdot C = 0$.   This is of course equivalent to taking the generic quotient of $X$ by all dominant families of subvarieties $V$ such that $L|_{V} \equiv 0$ for a general member $V$.  We will show that this generic quotient yields a geometrically interesting reduction map.

We start with an example from \cite{bdpp04} showing that the proper quotient of $X$ by movable curves $C$ with $L \cdot C = 0$ does not yield a suitable reduction map.  

\begin{exmple}(\cite{bdpp04}, Example 8.9) \label{bdppexample}
In this example we construct a smooth threefold $X$, a divisor $L$ on $X$ with $\kappa(L) = 1$, and a family of movable curves $C$ such that $L \cdot C = 0$ and the proper quotient of $X$ by the family maps to a point.  Since $\kappa(L) = 1$, this quotient ignores interesting geometric information about $L$.  Thus this example shows how the proper quotient can ``contract too much.''

Let $X'$ be the smooth projective bundle
\begin{equation*}
p: \mathbb{P}(\mathcal{O} \oplus \mathcal{O} \oplus \mathcal{O}(-1)) \to \mathbb{P}^{1}.
\end{equation*}
Let $T'$ denote the section $\mathbb{P}(\mathcal{O}(-1))$ of $p$.  Because $T'$ has normal bundle $N_{T'/X'} = \mathcal{O}(-1) \oplus \mathcal{O}(-1)$, we can construct the flop $\phi: X' \dashrightarrow X$ of $T'$.  We define the divisor $L$ on $X$ to be the strict transform of a fiber $F$ on $X'$.  Note that $\kappa(L) = 1$.  The pair $(X,L)$ will provide our example.

Choose any line $C'$ in $F$ that avoids $T'$ and let $C$ denote the strict transform of this line.  Since $\phi$ is an isomorphism on a neighborhood of $C'$, we have $L \cdot C = F \cdot C' = 0$.  We claim that deformations of $C$ define a connecting family of curves on $X$.  Any two points in a strict transform of a fiber of $p$ can be connected by deformations of $C$.  Furthermore, when we deform $C$ so that it intersects the flipped curve $T$, then $C$ breaks up into two components, one of which is $T$ itself.  So, we can traverse between the strict transforms of two different fibers of $p$ via the flipped curve $T$, showing that any point can be connected to any other by deformations of our family.  Thus, the proper quotient by this family of curves contracts $X$ to a point.
\end{exmple}

\subsection{Relative Triviality and the Numerical Dimension}

In order to prove Theorem \ref{ltrivialreduction}, we will need to further develop the theory of the numerical dimension.  The precise result we will need is:

\begin{thrm} \label{relativetriviality}
Let $X$ be a smooth variety and let $L$ be a pseudo-effective divisor on $X$.  Suppose that $f: X \to Z$ is a surjective morphism with connected fibers such that
\begin{enumerate}
\item $L|_{F} \equiv 0$ for a general fiber $F$.
\item There is a subvariety $T \subset L$ dominating $Z$ and not contained in $\mathbf{B}_{-}(L) \cup \Supp(L)$ such that $\nu_{X|T}(L) = 0$.
\end{enumerate}
Then $\nu(L) = 0$.
\end{thrm}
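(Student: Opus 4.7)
The goal is to show $\nu(L) = 0$, equivalently that $P_{\sigma}(L) \equiv 0$ by Theorem \ref{numdimproperties}(3). Since $L$ is pseudo-effective and $\overline{NE}^{1}(X)$ and $\overline{NM}_{1}(X)$ are dual cones by Theorem \ref{bdppmovablecurvetheorem}, this reduces to showing $P_{\sigma}(L) \cdot \alpha = 0$ for every movable curve class $\alpha$. My plan is to split into $f$-vertical and $f$-horizontal cases and use the two hypotheses separately.

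Via Theorem \ref{hironakaflattening} I would first pass to a smooth birational model on which $f$ lifts to a morphism and the strict transform of $T$ still maps surjectively to $Z$; Theorems \ref{numdimproperties}(4--5) and \ref{restnumdimproperties}(4) guarantee that both hypotheses transfer and that $\nu(L)$ is unchanged, and after replacing $L$ by $P_{\sigma}(L)$ I may assume $N_{\sigma}(L) = 0$. The vertical case is then immediate: any $\alpha \in \overline{NM}_{1}(X/Z)$ is of the form $i_{*}\beta$ for $\beta \in \overline{NM}_{1}(F)$ on a general fiber (Theorem \ref{relativebdpp}(2)), so $L \cdot \alpha = L|_{F} \cdot \beta = 0$ by the fiber triviality.

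The substantive work is the $f$-horizontal case. By Lemma \ref{nuvanishingcriterion}, if $\nu(L) \geq 1$ there must exist a very general complete intersection curve $C$ with $\nu_{X|C}(L) = 1$; assuming $\dim Z \geq 1$ (else $F = X$ and the result is immediate), such a $C$ is necessarily $f$-horizontal. Fix the sequence of models $\phi_{m}: \widetilde{X}_{m} \to X$ supplied by Proposition \ref{mapscomputingnumdim}, chosen to compute both $\nu_{X|T}$ and $\nu_{X|C}$. Since $T \to Z$ is surjective, I lift the image curve $B := f(C)$ to a curve $W \subset T$ mapping finitely onto $B$, by taking a general curve section of a component of $(f|_{T})^{-1}(B)$ dominating $B$; the hypothesis $\nu_{X|T}(L) = 0$ then forces $P_{\sigma}(\phi_{m}^{*}L) \cdot \widetilde{W}_{m} \to 0$. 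I then aim to show that $[\widetilde{C}_{m}]$ and an appropriate scalar multiple of $[\widetilde{W}_{m}]$ differ by a class representable as a limit of $f$-vertical movable classes, so that the vertical case forces the corresponding difference of intersections with $P_{\sigma}(\phi_{m}^{*}L)$ to tend to zero. Combining yields $\lim_{m} P_{\sigma}(\phi_{m}^{*}L) \cdot \widetilde{C}_{m} = 0$, contradicting $\nu_{X|C}(L) = 1$.

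The main obstacle is precisely this comparison of $[\widetilde{C}_{m}]$ and $[\widetilde{W}_{m}]$: $P_{\sigma}$ does not commute with pullback (only $P_{\sigma}(\phi^{*}L) \leq \phi^{*}P_{\sigma}(L)$), and the two curves lie on different subvarieties of $\widetilde{X}_{m}$, so the comparison cannot be read off the class in $N_{1}(X)$ alone. A plausible route is to flatten $f|_{T}$ via Theorem \ref{hironakaflattening} and exploit the fiberwise triviality of $L$ to obtain a "horizontal/vertical" decomposition of $P_{\sigma}(\phi_{m}^{*}L)$ modulo exceptional contributions, inside which the required intersection comparison becomes transparent using Theorem \ref{relativebdpp}(2).
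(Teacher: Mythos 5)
Your reduction via Lemma \ref{nuvanishingcriterion} to showing $\nu_{X|C}(L)=0$ for a very general complete intersection curve $C$ is the right starting point, and is exactly the reduction the paper makes. But the crucial step---comparing $[\widetilde{C}_{m}]$ with a scalar multiple of $[\widetilde{W}_{m}]$ on the models $\widetilde{X}_{m}$---is left as an acknowledged obstacle, and the suggested ``plausible route'' does not close it. In general the difference of these curve classes in $N_{1}(\widetilde{X}_{m})$ has no reason to be a limit of $f$-vertical movable classes: $\widetilde{C}_{m}$ and $\widetilde{W}_{m}$ live on different subvarieties of $\widetilde{X}_{m}$, and sharing an image curve in $Z$ constrains only their pushforwards to $N_{1}(Z)$, not the classes themselves. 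Nor does the fiberwise triviality of $L$ give any ``horizontal/vertical decomposition'' of $P_{\sigma}(\phi_{m}^{*}L)$ that would make the comparison transparent; $P_{\sigma}$ of a pullback is not itself a pullback, and the obstruction you correctly identify is precisely where the content of the theorem resides.

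The paper resolves this by arranging for the comparison to happen on a single surface rather than across the ambient models. After flattening $f$ to obtain $g: W \to Z'$ and choosing $C_{W}$ to avoid the countable union of codimension-two bad loci supplied by Corollary \ref{psigmarelativepositivity}, one defines $S$ as a general complete intersection surface containing $C_{W}$ inside $g^{-1}g(C_{W})$. This surface automatically contains a multisection $T_{S} := T_{W} \cap S$ coming from the hypothesis on $T$. On the strict transforms $S_{m}$, Lemma \ref{surfacelemma} shows that $P_{\sigma}(\phi_{m}^{*}L)|_{S_{m}}$ is nef (its intersection with every $g\circ\mu_{m}$-vertical curve in $S_{m}$ vanishes), and the pushforward to $S$ is therefore a nef class whose limit has zero intersection with both the fiber and the multisection $T_{S}$. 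At that point the result from \cite{8authors} Proposition 2.5---that on a surface over a curve a nef divisor trivial on fibers and on a multisection is numerically trivial---forces the limiting class to vanish, giving $\lim_{m} P_{\sigma}(\phi_{m}^{*}L)|_{S_{m}} \cdot \mu_{m}^{*}A = 0$ and hence $\nu_{X|C}(L)=0$. This surface step is exactly the missing device: by putting $C$ and the multisection into a common surface over a curve, one replaces the intractable curve-class comparison in $N_{1}(\widetilde{X}_{m})$ with classical nef divisor theory on surfaces.
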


This result will be used in a similar way as Proposition 2.5 of \cite{8authors} in the construction of the nef reduction map.

We first study the properties of morphisms such that $L$ is numerically trivial along the general fiber.  We will prove several lemmas that build toward Corollary \ref{psigmarelativepositivity}.  This corollary is a numerical version of \cite{nakayama04} III.5.2, and we will closely follow his method of proof.

\begin{lem} \label{surfacelemma}
Let $f: S \to C$ be a morphism with connected fibers from a smooth surface $S$ to a smooth curve $C$.  Let $L$ be a pseudo-effective divisor on $S$ such that $L \cdot F = 0$ for a general fiber $F$ of $f$.
\begin{enumerate}
\item If $L \cdot D = 0$ for every $f$-vertical curve $D$, then $L$ is nef.
\item If $L \cdot D \neq 0$ for some $f$-vertical curve $D$ contained in a fiber $F_{0}$, then there is an $f$-vertical curve $G$ contained in $F_{0}$ satisfying $L \cdot G < 0$.
\end{enumerate}
\end{lem}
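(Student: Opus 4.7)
The plan is to reduce both parts to classical surface geometry, using the fact that all fibers of $f$ are numerically equivalent together with the Zariski--Fujita decomposition on the smooth surface $S$.

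For part (2), I would begin by expanding $F_{0} = \sum_{j} b_{j} G_{j}$ into its irreducible components with multiplicities $b_{j} > 0$.  Since $f$ is a morphism from a smooth surface to a smooth curve, it is flat, and all scheme-theoretic fibers of $f$ are linearly equivalent; thus $L \cdot F_{0} = L \cdot F = 0$, which gives $\sum_{j} b_{j}(L \cdot G_{j}) = 0$.  The vertical curve $D \subset F_{0}$ provided by hypothesis is irreducible, hence coincides with one of the $G_{j}$, so at least one term of the sum is nonzero.  Positivity of the $b_{j}$ then forces some $L \cdot G_{k}$ to be strictly negative, and $G_{k}$ is the desired curve.

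For part (1), I would apply the classical Zariski decomposition $L = P + N$ on $S$: here $P$ is nef, $N = \sum_{i} a_{i} N_{i}$ is effective with $a_{i} > 0$, the intersection matrix $(N_{i} \cdot N_{j})$ is negative definite, and one has the orthogonality $P \cdot N_{i} = 0$ for every $i$.  Because a general fiber $F$ of $f$ is nef, both $P \cdot F \geq 0$ and $N \cdot F \geq 0$; together with the hypothesis $L \cdot F = 0$ this forces $N \cdot F = 0$, and hence every component $N_{i}$ must be $f$-vertical.  Now the assumption of (1) yields $L \cdot N_{i} = 0$, and combined with orthogonality this gives $N \cdot N_{i} = 0$ for every $i$.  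Therefore $N^{2} = \sum_{i} a_{i}(N \cdot N_{i}) = 0$, and negative definiteness of the intersection matrix forces $N = 0$.  Hence $L = P$ is nef.

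The main conceptual input is the Zariski--Fujita decomposition on a smooth projective surface, particularly the orthogonality property $P \cdot N_{i} = 0$ and the negative definiteness of the intersection matrix of the support of $N$; once these classical facts are available, both parts follow by essentially bookkeeping arguments, so I do not anticipate a serious obstacle beyond citing them correctly in the pseudo-effective $\mathbb{R}$-divisor setting.
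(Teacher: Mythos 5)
Your proof is correct. For part (1) you follow essentially the paper's route: apply the Zariski--Fujita decomposition $L = P + N$, deduce from $N \cdot F = 0$ that every component of $N$ is $f$-vertical, and use negative definiteness of the intersection matrix of $\Supp(N)$ to force $N = 0$. The only cosmetic difference is that you invoke the orthogonality $P \cdot N_i = 0$ directly, whereas the paper derives $P \cdot D = 0$ for every $f$-vertical curve $D$ from nefness of $P$ together with $P \cdot F = 0$; either way one concludes $L \cdot N_i = N \cdot N_i$. Part (2) is where you genuinely diverge, and in fact simplify: you never touch the Zariski decomposition, but simply write $F_0 = \sum_j b_j G_j$, observe $L \cdot F_0 = L \cdot F = 0$, and note that a nonzero summand $L \cdot D$ together with $b_j > 0$ forces some $L \cdot G_k < 0$. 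The paper instead reuses the decomposition in part (2): since $P$ kills every vertical curve, $N \cdot D = L \cdot D \neq 0$ forces $N$ to have a component in $F_0$, and negative definiteness of the intersection matrix restricted to the $F_0$-components of $N$ then produces the desired curve $G$. Your version of (2) buys elementariness and avoids negative definiteness entirely; the paper's version buys a uniform treatment of both parts under the single decomposition. One minor slip to fix in your writeup: the fibers of $f$ over distinct points of $C$ are algebraically, hence numerically, equivalent, but they are \emph{not} linearly equivalent when $C$ has positive genus. Numerical equivalence is what justifies $L \cdot F_0 = L \cdot F$, and it suffices for your argument.
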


\begin{proof}
Let $L = P + N$ be the Zariski decomposition of $L$.  Since $P$ is nef and $P \cdot F = 0$, $P$ has vanishing intersection with every $f$-vertical curve.  Note that $N$ is an (effective) $f$-vertical curve since $N \cdot F = L \cdot F = 0$.

We first show (2).  By assumption $N$ must have some components contained in $F_{0}$.  Recall that the self-intersection matrix of the components of $N$ is negative-definite.  In fact, since $f$-vertical curves in different fibers do not intersect, the same is true just for the components contained in $F_{0}$.  Thus, there is an effective curve $G$ supported on $\Supp(N) \cap \Supp(F_{0})$ with $N \cdot G < 0$.  The same argument shows that in (1) we must have $N=0$ so that $L$ is nef.
\end{proof}

\begin{lem}[cf.~\cite{nakayama04} III.5.2] \label{nakayamalemma}
Let $f: X \to Z$ be a morphism from a smooth variety $X$ to a smooth variety $Z$ with connected fibers.  Let $L$ be a pseudo-effective divisor such that $L|_{F} \equiv 0$ for a general fiber $F$ of $f$.  Suppose that $\Theta$ is a divisor on $Z$ such that the restriction of $L$ to the fiber over a general point of $\Theta$ is not numerically trivial.  Then there is some prime divisor $\Gamma$ with $f(\Gamma) = \Theta$ such that $L|_{\Gamma}$ is not pseudo-effective.
\end{lem}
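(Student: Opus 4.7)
Proof plan. The strategy is to reduce to the situation of Lemma \ref{surfacelemma}(2) by cutting $X$ with general hyperplane sections of $Z$ and of $X$ to produce a surface-fibration over a curve, and then to assemble the negative vertical curves so obtained into a prime divisor over $\Theta$ with non-pseudo-effective restriction, invoking Theorem \ref{bdppmovablecurvetheorem}.

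First, I would cut down to a surface. Write $d = \dim Z$ and $n = \dim X$. Pick $d-1$ very general very ample divisors $H_1, \ldots, H_{d-1}$ on $Z$ so that $C := H_1 \cap \cdots \cap H_{d-1}$ is a smooth irreducible curve meeting $\Theta$ transversely at general points of $\Theta$; fix one such point $p$. The hypothesis yields an irreducible curve $\gamma \subset F_p$ with $L \cdot \gamma \neq 0$. Using Serre vanishing, choose $n-d-1$ sufficiently general elements $A_1, \ldots, A_{n-d-1}$ of the sublinear system of $|mA|$ (for a very ample class $A$ and $m \gg 0$) consisting of divisors containing $\gamma$. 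Let $S$ be a resolution of singularities of $\widehat{S} := f^{-1}(C) \cap A_1 \cap \cdots \cap A_{n-d-1}$, followed if necessary by Stein factorization, so that the induced $f_S: S \to C$ is surjective with connected fibers. The pullback of $L$ to $S$ is pseudo-effective (iterated restriction to general very ample divisors preserves pseudo-effectiveness); it satisfies $L \cdot F_S = 0$ on a general fiber $F_S$ of $f_S$, since $F_S$ is a general complete intersection curve in a general fiber $F$ of $f$ where $L|_F \equiv 0$; and the strict transform $\widetilde\gamma$ is an $f_S$-vertical curve in the fiber over $p$ with $L \cdot \widetilde\gamma \neq 0$. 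Lemma \ref{surfacelemma}(2) then produces an $f_S$-vertical curve $G \subset S$ in the fiber over $p$ with $L \cdot G < 0$; its image in $X$ is a curve in $F_p \subset f^{-1}(\Theta)$ of negative $L$-degree.

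Next, I would sweep out a prime divisor. Vary all choices: let $p' \in \Theta$ range through a dense open subset (with the $H_i$'s varying so that $p' \in C \cap \Theta$), let $\gamma$ vary through the algebraic family of irreducible curves in $F_{p'}$ satisfying $L \cdot \gamma \neq 0$, and let the $A_j$'s vary generically within their sublinear systems. A parameter count ($d-1$ from the choice of $p'$, plus $n-d-1$ from the freedom of $\gamma$ in $F_{p'}$, plus $1$ for the curves themselves) shows that the resulting family of negative vertical curves $G$ sweeps out a closed subvariety of $f^{-1}(\Theta)$ of dimension $n-1$. Since $f^{-1}(\Theta)$ is pure of dimension $n-1$, this sweep contains an entire irreducible component $\Gamma \subset f^{-1}(\Theta)$, a prime divisor with $f(\Gamma) = \Theta$. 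On a smooth model $\widetilde\Gamma \to \Gamma$, the strict transforms of the curves $G$ form a covering, hence movable, family of curves with negative intersection against the pullback of $L|_\Gamma$, so Theorem \ref{bdppmovablecurvetheorem} forces $L|_\Gamma$ to fail to be pseudo-effective, as desired.

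The main obstacle is the sweep-out step: one must verify that the family of triples $(p', \gamma, G)$ really has the claimed $(n-1)$-dimensional image, which relies on the non-numerical-triviality hypothesis to guarantee enough curves $\gamma$ in each fiber $F_{p'}$ with $L \cdot \gamma \neq 0$ and to ensure these fit into a genuinely algebraic family as $p'$ varies. A secondary subtlety is the Bertini argument for the sublinear system of $|mA|$ containing $\gamma$: a general member is smooth only away from $\gamma$, so one must resolve singularities with care and track the divisorial Zariski decomposition through the resolution in order to preserve the numerical hypotheses needed for Lemma \ref{surfacelemma}(2).
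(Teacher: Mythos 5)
Your proposal follows the same basic route as the paper's proof: cut $Z$ to a curve $C$, cut $f^{-1}(C)$ to a surface fibration $S \to C$, apply Lemma \ref{surfacelemma}(2) to produce $f$-vertical curves of negative $L$-degree over $C \cap \Theta$, and observe that as the cuts vary these negative curves sweep out a prime divisor $\Gamma$ dominating $\Theta$ on which $L$ restricts to a non-pseudo-effective class. The paper's argument is considerably terser, however, because it takes $S$ to be a complete intersection of \emph{general} very ample divisors of $X$: Bertini then gives smoothness of $S$ for free, and no Stein factorization or resolution of singularities is needed. Your extra wrinkle---first pinning down a curve $\gamma \subset F_p$ with $L \cdot \gamma \neq 0$, then constraining the hyperplane sections to pass through $\gamma$---is meant to guarantee that the vertical fiber of $S \to C$ over $p$ has nonzero $L$-degree, but it trades one concern for another: as you yourself flag, members of the sublinear system through $\gamma$ are generically singular along $\gamma$, so you are forced to resolve and then re-verify the numerical hypotheses of Lemma \ref{surfacelemma}. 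The paper avoids this by relying on the observation that when $L|_{F_p} \not\equiv 0$ the general complete intersection curve $S \cap F_p$ itself has nonzero $L$-degree (and in the reducible case some component does), so the hypothesis of part (2) of the surface lemma holds automatically. In short: same strategy and same key lemma, but your version imports machinery the paper does not need, and the additional Bertini/resolution bookkeeping you gesture at in the final paragraph would have to be carried out in full for your variant to be complete.
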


\begin{proof}
Take an intersection of general very ample divisors on $Z$ to obtain a curve $C$ such that $L$ is not numerically trivial along some fibers over $C \cap \Theta$.  Intersect general very ample divisors on $X$ with $f^{-1}(C)$ to obtain a surface $S$ with a map $f: S \to C$.  Note that $S$ and $C$ are smooth.  Applying Lemma \ref{surfacelemma} to $L|_{S}$, we find that there is some $f$-vertical curve $D$ above $C \cap \Theta$ such that $L \cdot D < 0$.  Since $S$ and $C$ are sufficiently general, such curves must cover a codimension $1$ subset above $\Theta$, proving the theorem.
\end{proof}

\begin{cor} \label{psigmarelativepositivity}
Let $f: X \to Z$ be a morphism from a smooth variety $X$ to a smooth variety $Z$ with connected fibers.  Suppose that $L$ is a pseudo-effective divisor such that $L|_{F} \equiv 0$ for a general fiber $F$ of $f$.  Then there is a subset $U \subset Z$ whose complement is a countable union of subvarieties of codimension $2$ such that $P_{\sigma}(L)$ is numerically trivial along the fiber over any point in $U$.
\end{cor}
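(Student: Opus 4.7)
The plan is to apply Lemma~\ref{nakayamalemma} with $P_\sigma(L)$ in place of $L$ and then derive a contradiction from its conclusion using the fact that $P_\sigma$ kills all divisorial components of the restricted base locus.

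First I would verify the hypothesis: for a general fiber $F$ of $f$, $P_\sigma(L)|_F \equiv 0$. From $L|_F \equiv 0$ and $L = P_\sigma(L) + N_\sigma(L)$ one has $P_\sigma(L)|_F + N_\sigma(L)|_F \equiv 0$, and for $F$ avoiding the finitely many components of $N_\sigma(L)$ from Theorem~\ref{nsigmafinitelymanycomponents}, the restriction $N_\sigma(L)|_F$ is honestly effective while $P_\sigma(L)|_F$ is pseudo-effective by Theorem~\ref{relativebdpp}(1). Two pseudo-effective classes summing to zero must each be numerically trivial. Let $B \subset Z$ denote the set where $P_\sigma(L)|_{F_z} \not\equiv 0$; this $B$ is a countable union of closed subvarieties of $Z$, as it can be detected by a countable family of $f$-vertical subvarieties of $X$.

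Suppose for contradiction that $B$ contains a prime divisor $\Theta \subset Z$. Lemma~\ref{nakayamalemma} applied to $P_\sigma(L)$ yields a prime divisor $\Gamma$ on $X$ with $f(\Gamma) = \Theta$ such that $P_\sigma(L)|_\Gamma$ is not pseudo-effective. The main obstacle is converting this into a contradiction. I would pass to a smooth resolution $\mu : \widetilde\Gamma \to \Gamma$; since $\mu^*(P_\sigma(L)|_\Gamma)$ fails to be pseudo-effective, Theorem~\ref{bdppmovablecurvetheorem} produces a movable curve $\widetilde C$ on $\widetilde\Gamma$ with negative intersection against it, and its image gives a family of curves $\{C\}$ sweeping $\Gamma$ with $P_\sigma(L) \cdot C < 0$. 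A standard perturbation argument then shows any such $C$ lies in $\mathbf{B}_-(P_\sigma(L))$: otherwise, for every ample $A$ there would exist an effective representative of $P_\sigma(L) + A$ whose support avoids $C$, giving $P_\sigma(L) \cdot C \geq -A \cdot C$, and scaling $A$ to zero yields $P_\sigma(L) \cdot C \geq 0$. Since the family of such $C$ covers $\Gamma$, we obtain $\Gamma \subset \mathbf{B}_-(P_\sigma(L))$, and by the proposition identifying $\Supp(N_\sigma)$ with the divisorial part of $\mathbf{B}_-$ this forces $\sigma_\Gamma(P_\sigma(L)) > 0$. This contradicts the identity $\sigma_\Gamma(P_\sigma(L)) = 0$, which is immediate from the construction of $P_\sigma$. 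Hence no component of $B$ is a divisor, and taking $U = Z \setminus B$ completes the proof.
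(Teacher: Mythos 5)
Your argument is correct and follows essentially the same route as the paper: apply Lemma~\ref{nakayamalemma} with $P_\sigma(L)$ in place of $L$, then rule out any codimension-one component of the bad locus using the fact that $P_\sigma(L)$ restricts pseudo-effectively to every prime divisor. The paper simply cites this last fact (it is a theorem of Nakayama), whereas you re-derive it via BDPP duality together with the identification of $\Supp(N_\sigma)$ with the divisorial part of $\mathbf{B}_-$ and the identity $\sigma_\Gamma(P_\sigma(L))=0$; you also spell out the verification that $P_\sigma(L)|_F\equiv 0$ on a general fiber, which the paper leaves implicit. These are welcome additions but do not change the structure of the proof.
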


\begin{proof}
Fix an ample divisor $H$ on $Z$.  There is a countable union of closed subvarieties $S \subset Z$ such that any curve $C$ on $X$ satisfying $f^{*}H \cdot C = 0$ and $P_{\sigma}(L) \cdot C < 0$ has $f(C) \subset S$.  Since $P_{\sigma}(L)|_{\Gamma}$ is pseudo-effective for any prime divisor $\Gamma$ on $X$, Lemma \ref{nakayamalemma} implies that no component of $S$ has codimension $1$.
\end{proof}

We now have the tools to prove Theorem \ref{relativetriviality}.

\begin{proof}[Proof of Theorem \ref{relativetriviality}:]
To show that $\nu(L) = 0$, by Lemma \ref{nuvanishingcriterion} it suffices to show that a curve $C$ which is a very general intersection of very ample divisors satisfies $\nu_{X|C}(L) = 0$.

Let $f': X' \to Z'$ be a flattening of $f$ (with $Z'$ smooth) and let $\phi: W \to X$ be a smooth model lying above $X'$.  We let $g$ denote the map $W \to Z'$.  Note that no $\phi$-exceptional center dominates $Z$ under $f$.  Since $T$ dominates $Z$, it makes sense to talk of the strict transform $T_{W}$ of $T$.   By Theorem \ref{restnumdimproperties} we still have $\nu_{W|T_{W}}(\phi^{*}L) = 0$.  By choosing $C$ sufficiently general, we may assume that it lies in the locus where $\phi^{-1}$ is an isomorphism.  We will write $C_{W}$ for the strict transform of $C$ on $W$.

Choose an infinite sequence of birational maps $\psi_{m}: W_{m} \to W$ computing the numerical dimension as in Proposition \ref{mapscomputingnumdim} and let $\phi_{m}$ denote the map from $W_{m}$ to $X$.  For each $W_{m}$, there is a subset $V_{m} \subset Z'$ that is a countable union of closed subsets of codimension $2$ such that $P_{\sigma}(\phi_{m}^{*}L)$ is numerically trivial when restricted to any fiber of $g \circ \phi_{m}$ not over $V_{m}$.  Note that $\phi(g^{-1}V_{m})$ is a countable union of codimension $2$ subsets in $X$.  By choosing $C$ very general, we may assume that $g(C_{W})$ is disjoint from $\cup_{m} V_{m}$.  We may furthermore assume that $C$ avoids $\mathbf{B}_{-}(P_{\sigma}(L))$ so that no $\phi_{m}$-exceptional divisor intersects $C$.

Define a surface $S$ by taking an intersection of general very ample divisors containing $C_{W}$ with $g^{-1}g(C_{W})$.  Our goal is to show that $\nu_{W|S}(\phi^{*}L) = 0$, which immediately implies
\begin{equation*}
\nu_{X|C}(L) = \nu_{W|C_{W}}(\phi^{*}L) = 0.
\end{equation*}
It suffices to show that $\nu_{W|A}(\phi^{*}L) = 0$ where $A$ is a very general ample divisor on $S$.  In particular, we may assume that the strict transform of $A$ coincides with the pull-back for each of the maps computing the restricted positive product.

We let $S_{m}$ denote the strict transform of $S$ over $\psi_{m}$ and for clarity let $\mu_{m} = \psi_{m}|_{S_{m}}$.  By Proposition \ref{mapscomputingnumdim}, it suffices to show that
\begin{equation*}
\liminf_{m \to \infty} \mu_{m*}\left( P_{\sigma}(\phi_{m}^{*}L)|_{S_{m}} \right) \cdot A =
\liminf_{m \to \infty} \left( P_{\sigma}(\phi_{m}^{*}L)|_{S_{m}} \right) \cdot \mu_{m}^{*}A = 
 0.
\end{equation*}
By construction $P_{\sigma}(\phi_{m}^{*}L)|_{S_{m}}$ is numerically trivial when restricted to any fiber of $g \circ \mu_{m}$.  This implies that $P_{\sigma}(\phi_{m}^{*}L)|_{S_{m}}$ is nef by Corollary \ref{surfacelemma}.  Furthermore, since every $\mu_{m}$-exceptional divisor is $g \circ \mu_{m}$ vertical we have
\begin{equation*}
P_{\sigma}(\phi_{m}^{*}L)|_{S_{m}} =\mu_{m}^{*}\mu_{m*}P_{\sigma}(\phi_{m}^{*}L)|_{S_{m}}
\end{equation*}
and the pushforward $\mu_{m*}P_{\sigma}(\phi_{m}^{*}L)|_{S_{m}}$ is also nef.

Let $T_{S}$ denote the intersection $T_{W} \cap S$.  Since $T_{W} \not \subset \mathbf{B}_{-}(L)$, by making an appropriate choice of $C$ we may assume that $T_{S} := T_{W} \cap S \not \subset \mathbf{B}_{-}(\phi^{*}L)$.  Then $\nu_{W|T_{S}}(\phi^{*}L)$ vanishes since it is no greater than $\nu_{W|T_{W}}(\phi^{*}L)) = 0$.  Let $T_{S,m}$ denote the strict transform of $T_{S}$ under $\mu_{m}$.  Since every $\mu_{m}$-exceptional curve is $g \circ \mu_{m}$-vertical,
\begin{align*}
\liminf_{m \to \infty} \left( P_{\sigma}(\phi_{m}^{*}L)|_{S_{m}} \right) \cdot \mu_{m}^{*}(T_{S}) &
= \liminf_{m \to \infty} \left( P_{\sigma}(\phi_{m}^{*}L)|_{S_{m}} \right) \cdot T_{S,m} \\
&= 0.
\end{align*}
In particular, the limit of the $\mu_{m*}\left( P_{\sigma}(\phi_{m}^{*}L)|_{S_{m}} \right)$ is a nef class that has vanishing intersection with $F$ and the multisection $T_{S}$.  \cite{8authors} Proposition 2.5 then shows that this limit is numerically equivalent to $0$.  Thus, it has vanishing intersection with $A$, finishing the proof.
\end{proof}

\subsection{Definition and Properties}

The main step in the proof of Theorem \ref{ltrivialreduction} is to show that $\nu(L|_{F}) = 0$ for a general fiber $F$.  We will prove this by carefully examining the connection properties of generic quotients and applying Theorem \ref{relativetriviality} inductively.

\begin{defn}
Let $(U,V,s,w)$ be a proper family of subschemes of $X$.  Suppose that $T \subset X$ is a closed subset.  We define the $w$-closure of $T$, denoted $T^{\overline{w}}$, to be the closed subset $w_{i}(s_{i}^{-1}s_{i}(w_{i}^{-1}T))$.  In other words, the $w$-closure of $T$ consists of all the points of $X$ that can be connected to $T$ via a subscheme in our family by one step.
\end{defn}

\begin{lem}
Let $X$ be a smooth variety and let $L$ be a pseudo-effective divisor.  Suppose that that $(U,V,s,w)$ is a proper surjective family of curves such that $U$ is irreducible, $w: U \to X$ is flat, and $L \cdot C =0$ for a general member of our family.  Suppose that $T \subset X$ is a subvariety containing a point $x$ such that
\begin{enumerate}
\item $x$ does not lie in $\mathbf{B}_{-}(L)$, and
\item every component of $w^{-1}(x)$ intersects an irreducible fiber of $s$.
\end{enumerate}
If $\nu(L|_{T}) = 0$, then $\nu(L|_{T_{i}}) = 0$ for any component $T_{i}$ of $T^{\overline{w_{i}}}$.
\end{lem}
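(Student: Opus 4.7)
The plan is to reduce to the hypotheses of Theorem \ref{relativetriviality} by working on the total space of the subfamily of curves that sweep $T_i$ out of $T$. Pick an irreducible component $\mathcal{U}' \subset s^{-1}(s(w^{-1}(T)))$ with $w(\mathcal{U}') = T_i$, and set $\mathcal{V}' := s(\mathcal{U}')$. After applying Theorem \ref{hironakaflattening} together with resolution of singularities and Stein factorization, I may assume I have a diagram in the smooth category
\begin{equation*}
\widetilde{T_i} \xleftarrow{\widetilde{w}} \widetilde{\mathcal{U}} \xrightarrow{\widetilde{s}} \widetilde{\mathcal{V}}
\end{equation*}
with $\widetilde{s}$ surjective with connected fibers, together with compatible birational maps to $T_i$, $\mathcal{U}'$ and $\mathcal{V}'$. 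Let $M := \widetilde{w}^{*}(L|_{\widetilde{T_i}})$, which is pseudo-effective on $\widetilde{\mathcal{U}}$.

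Next I will verify the two hypotheses of Theorem \ref{relativetriviality} for $\widetilde{s}$ and $M$. For hypothesis (1), a general fiber $F$ of $\widetilde{s}$ is birational to an irreducible fiber of $s$, and $\widetilde{w}_{*}F$ is a (multiple of a) curve in our original family. Since $U$ is irreducible and $w$ is flat, intersection numbers with $L$ are constant across the family, so $L\cdot C = 0$ holds for every curve $C$ in the family, and hence $M \cdot F = L|_{T_i} \cdot \widetilde{w}_{*}F = 0$. For hypothesis (2), let $T''$ be the strict transform in $\widetilde{\mathcal{U}}$ of a component of $w'^{-1}(T \cap T_i)$ passing through a preimage of $x$. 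Because $\mathcal{U}'$ is a union of fibers of $s$ each of which meets $w^{-1}(T)$ by construction, $T''$ dominates $\widetilde{\mathcal{V}}$ under $\widetilde{s}$. Moreover, since $\widetilde{w}$ maps $T''$ into $T \cap T_i \subset T$ and $\nu(L|_T) = 0$, Theorem \ref{restnumdimproperties}(2) and (4) together with the vanishing of $P_{\sigma}(L|_{T \cap T_i})$ force $\nu_{\widetilde{\mathcal{U}}|T''}(M) = 0$.

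Having verified both hypotheses, Theorem \ref{relativetriviality} yields $\nu(M) = 0$. To pass from $M = \widetilde{w}^{*}(L|_{\widetilde{T_i}})$ back to $L|_{T_i}$, I invoke Lemma \ref{nuvanishingcriterion}: if $\nu(L|_{T_i}) \geq 1$ then there is a very general complete intersection curve $C \subset \widetilde{T_i}$ with $\nu_{\widetilde{T_i}|C}(L|_{\widetilde{T_i}}) = 1$. Such a $C$ lies in the image of $\widetilde{w}$ and avoids every exceptional locus of $\widetilde{w}$, so the preimage $\widetilde{C} := \widetilde{w}^{-1}(C)$ is a multisection of $\widetilde{w}$, and Theorem \ref{restnumdimproperties}(4) applied along this generically finite restriction gives $\nu_{\widetilde{\mathcal{U}}|\widetilde{C}}(M) \geq 1$, contradicting $\nu(M) = 0$.

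The main obstacle is verifying the non-containment $T'' \not\subset \mathbf{B}_{-}(M) \cup \Supp(M)$ required implicitly by the restricted numerical dimension in hypothesis (2). This is exactly where the two hypotheses of the lemma are used: condition (1), $x \notin \mathbf{B}_{-}(L)$, prevents $x$ (and hence its preimages) from landing in the bad analytic locus, while condition (2), that every component of $w^{-1}(x)$ meets an irreducible fiber of $s$, ensures that I may select a preimage $\tilde{x} \in T''$ lying in the flat, irreducible-fiber locus of $\widetilde{\mathcal{U}}$ where the generic quotient is actually controlled, so that $\mathbf{B}_{-}(M)$ and $\Supp(M)$ do not contain all of $T''$. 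Apart from this careful bookkeeping, the argument is a direct application of Theorem \ref{relativetriviality}.
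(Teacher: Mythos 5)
Your overall strategy is the same as the paper's: restrict to the subfamily $\mathcal{U}' = s^{-1}s(w^{-1}T)$ sweeping out $T_i$, observe that $w$ and $s$ give a ``family of curves over a base'' picture on $\mathcal{U}'$, and apply Theorem \ref{relativetriviality} using the preimage of $T$ as the multisection on which the restricted numerical dimension vanishes. The paper does this by setting $Y = s^{-1}s(T')$ for a component $T'$ of $w^{-1}(T)$ dominating $T_i$, verifying that $w|_{T'}:T' \to T$ is surjective by flatness (so $\nu(w^*L|_{T'}) = \nu(L|_T) = 0$, hence $\nu_{Y|T'}(w^*L|_Y) = 0$ by Theorem \ref{restnumdimproperties}(2)), applying Theorem \ref{relativetriviality}, and concluding. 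Your extra flattening/resolution/Stein is harmless, and your verification of hypothesis (1) of Theorem \ref{relativetriviality} and your discussion of the role of hypotheses (1)--(2) of the lemma are both sound. Two small notes on hypothesis (2): you should observe that $T \subset T_i$ (which follows because $w(T') = T$ and $w(Y) = T_i$), so that your $T \cap T_i$ is actually all of $T$; and the citation of Theorem \ref{restnumdimproperties}(4) here is extraneous.

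The genuine gap is in the final descent from $\nu(M) = 0$ to $\nu(L|_{T_i}) = 0$. You invoke Lemma \ref{nuvanishingcriterion} to extract a very general complete intersection curve $C \subset \widetilde{T_i}$ with $\nu_{\widetilde{T_i}|C}(L|_{\widetilde{T_i}}) = 1$, and then assert that $\widetilde{C} := \widetilde{w}^{-1}(C)$ is a ``multisection'' and that ``Theorem \ref{restnumdimproperties}(4) applied along this generically finite restriction'' gives $\nu_{\widetilde{\mathcal{U}}|\widetilde{C}}(M) \geq 1$. This does not work: $\widetilde{w}$ is not birational, so Theorem \ref{restnumdimproperties}(4) simply does not apply, and $\widetilde{w}^{-1}(C)$ need not be a curve or map finitely onto anything (it is typically a surface fibering over $C$; calling it a multisection of $\widetilde{w}$ is a misnomer in any case since a multisection would dominate $\widetilde{T_i}$, not $C$). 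Even in the special case where $\widetilde{w}$ is generically finite, Theorem \ref{restnumdimproperties}(2) only gives $\nu_{\widetilde{\mathcal{U}}|\widetilde{C}}(M) \leq \nu(M|_{\widetilde{C}})$, which is the wrong direction for the contradiction you want. The correct (and standard) input, which the paper uses implicitly when it writes ``thus $\nu(L|_{T_i}) = 0$,'' is the invariance of the numerical dimension under pullback by a surjective morphism: since $M = \widetilde{w}^*(L|_{\widetilde{T_i}})$ and $\widetilde{w}$ is surjective, $\nu(M) = \nu(L|_{T_i})$. You should replace the Lemma \ref{nuvanishingcriterion} detour by this fact.
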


\begin{proof}
Since $w$ is flat, every component of $w^{-1}(T)$ intersects $w^{-1}(x)$.  In particular, every component intersects the locus of irreducible fibers of $s$ and is not contained in $w^{-1}\mathbf{B}_{-}(L)$.  In particular $T_{i} \not \subset \mathbf{B}_{-}(L)$, so $L|_{T_{i}}$ is pseudo-effective and we may analyze its numerical dimension.

Fix a component $T'$ of $w^{-1}(T)$ such that $s^{-1}s(T')$ dominates $T_{i}$.  Consider the subvarieties $Z = s(T')$ and $Y = s^{-1}s(T')$.  By blowing up we may suppose that $T'$, $Z$, and $Y$ are all smooth.  Since the map $w|_{T'}$ is surjective by flatness of $w$, we have $\nu(w^{*}L|_{T'}) = 0$.  In particular $\nu_{Y|T'}(w^{*}L|_{Y}) = 0$.  Furthermore, since by assumption $T'$ hits the locus of irreducible fibers of $s$, we know that $w^{*}L$ is numerically trivial along the general fiber of $s$.  Applying Proposition \ref{relativetriviality} we see that $\nu(w^{*}L|_{Y}) = 0$, and thus $\nu(L|_{T_{i}})=0$.
\end{proof}

\begin{prop} \label{bdpptriviality}
Let $X$ be a normal variety and let $L$ be a pseudo-effective divisor such that the generic quotient of $X$ by all movable curves $C$ satisfying $L \cdot C = 0$ takes $X$ to a point.  Then $\nu(L) = 0$.
\end{prop}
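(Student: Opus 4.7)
The plan is to apply the preceding lemma iteratively, building up an ascending chain of closed subsets of $X$ whose irreducible components $T'$ all satisfy $\nu(L|_{T'}) = 0$, until the chain exhausts $X$. Since $\nu(L)$ and the generic quotient are birational invariants (Theorem \ref{numdimproperties} and Corollary \ref{genericquotientbirationalinvariance}), we may assume $X$ is smooth by passing to a resolution. By Remark \ref{infinitefamilies} we may replace the hypothesis by a finite collection of proper surjective families of movable curves $\{(U_i,V_i,s_i,w_i)\}_{i=1}^n$ with $L \cdot C = 0$ on general members, whose generic quotient is a single point. Let $X^0 \subset X$ denote the open locus from Construction \ref{genericquotientconstruction} where every $w_i$ is flat, so that the relation generated by the families becomes an equivalence relation on $X^0$ with $X^0$ itself as its unique class.

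Choose a very general point $x_0 \in X^0 \setminus \mathbf{B}_-(L)$ so that for each $i$ every component of $w_i^{-1}(x_0)$ meets an irreducible fiber of $s_i$. Define closed subsets of $X$ recursively by $T_0 = \{x_0\}$ and $T_{k+1} = T_k \cup \bigcup_{i=1}^n T_k^{\overline{w_i}}$. The inductive claim is that every irreducible component $T_{k,j}$ of $T_k$ satisfies $\nu(L|_{T_{k,j}}) = 0$: this is trivial for $k=0$, and the inductive step applies the preceding lemma to each $T_{k,j}$ (using a suitably general point of $T_{k,j}$) and each family $w_i$, yielding $\nu(L|_{T'}) = 0$ for every component $T'$ of $T_{k,j}^{\overline{w_i}}$.

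By Noetherianity of $X$, the chain $T_0 \subset T_1 \subset \cdots$ stabilizes at some $T_N$. Stabilization forces $T_N^{\overline{w_i}} \subset T_N$ for every $i$, so $T_N$ is saturated under each one-step $w_i$-extension; equivalently, $T_N \cap X^0$ is a union of equivalence classes of the generated relation on $X^0$. Since it contains $x_0$ and the whole of $X^0$ is a single equivalence class, $T_N \supseteq X^0$, so $T_N = X$. Applying the inductive claim to the finitely many components of $T_N = X$ yields $\nu(L) = \nu(L|_X) = 0$.

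The main obstacle is verifying at each inductive step that every component $T_{k,j}$ contains a point satisfying both hypotheses of the preceding lemma. Condition (1) follows from the lemma's own conclusion together with pseudo-effectivity: $\nu(L|_{T_{k,j}}) = 0$ forces $L|_{T_{k,j}}$ to be pseudo-effective, hence $T_{k,j} \not\subset \mathbf{B}_-(L)$. Condition (2) holds on a dense open subset of $X$ by semicontinuity of fiber dimensions, and one must check that the component $T_{k,j}$, being obtained by sweeping movable curves from a good base point, is not entirely contained in the complementary proper closed subset. This is handled by carefully tracking the open loci $U_i^0$ from Construction \ref{genericquotientconstruction} through the iteration.
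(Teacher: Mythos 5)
Your overall strategy matches the paper's in outline (iterate the preceding lemma starting from a very general point, use the quotient-to-a-point hypothesis to conclude the iteration exhausts $X$), but the two proofs diverge at a genuinely important point, and the divergence opens a gap in your argument.

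The paper's proof constructs a \emph{single} increasing chain of irreducible subvarieties $T_0 = \{x\} \subsetneq T_1 \subsetneq \cdots \subsetneq T_k = X$, where each $T_{i+1}$ is chosen to be a component of $T_i^{\overline{w_j}}$ \emph{that contains the fixed very general point $x$}. Because $x$ is very general once and for all, both hypotheses (1) and (2) of the preceding lemma are verified by $x$ itself, and the same point $x$ serves at every step of the induction. No new verification is needed along the way.

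Your proof instead sets $T_{k+1} = T_k \cup \bigcup_i T_k^{\overline{w_i}}$ and invokes Noetherian stabilization. Your argument that $T_N = X$ is correct (modulo the small imprecision that the equivalence class of $x_0$ in $X^0$ is dense rather than being literally all of $X^0$). The problem is the inductive claim: for $k \geq 2$, the irreducible components $T_{k,j}$ of $T_k$ need not contain $x_0$. (At the first step every component of $T_0^{\overline{w_j}}$ is a union of curve-images through $x_0$, hence contains $x_0$; but $T_{1,\ell}^{\overline{w_j}}$ is swept out by curves through \emph{all} points of $T_{1,\ell}$, and a component built from curves through points of $T_{1,\ell}$ far from $x_0$ can fail to contain $x_0$.) So for each such component you must produce a \emph{new} suitable point satisfying conditions (1) and (2), and your final paragraph acknowledges this but does not carry it out. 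For condition (1) your observation (the lemma's own conclusion guarantees $T_{k,j} \not\subset \mathbf{B}_-(L)$) is fine. For condition (2), the assertion that $T_{k,j}$ meets the dense open locus where condition (2) holds --- ``handled by carefully tracking the open loci $U_i^0$'' --- is left as a claim. A priori the $w$-closure operation can produce components lying entirely inside the bad locus of some family $s_i$ (where fibers are reducible or $w_i$ is not flat), and nothing in your construction rules this out. This is the step the paper avoids entirely by threading the single point $x$ through every $T_i$; I would rewrite the induction in that form rather than trying to patch the union-based iteration.
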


\begin{proof}
Let $(U_{i},V_{i},s_{i},w_{i})$ denote the proper surjective families of curves that define the generic quotient.  After blowing-up, we may assume that $X$ is smooth and that every $w_{i}$ is flat so that the proper quotient by the $U_{i}$ takes $X$ to a point.

Fix a very general point $x \in X$.  There is a sequence of subvarieties $T_{i} \subset X$ such that $T_{0} = x$, $T_{k} = X$, and $T_{i+1}$ is a component of $T_{i}^{\overline{w_{j}}}$ for some $j$ such that $T_{i+1}$ contains $x$.  Applying the previous lemma inductively finishes the proof.
\end{proof}

\begin{proof}[Proof of Theorem \ref{ltrivialreduction}:]
To construct the $L$-trivial reduction map, take the generic quotient (in the sense of Remark \ref{infinitefamilies}) associated to all families of movable curves $C$ with $L \cdot C = 0$ and choose a resolution $\pi: Y \to Z$ with $Y$ smooth.  Clearly this map has Property (1).    Property (3) follows from Proposition \ref{genericquotientdominance}, which says that the generic quotient is the minimal quotient as we look over all strict transform families.

Let $F$ be a general fiber of $\pi$ so that $F$ is smooth and $\phi^{*}L|_{F}$ is pseudo-effective by Theorem \ref{relativebdpp}.  Note that the families of curves $U'_{i} \times_{Y} F$ are flat over $F$ and that the proper quotient by these families contracts $F$ to a point.  By Proposition \ref{bdpptriviality}, we conclude that $\nu(\phi^{*}L|_{F})=0$, giving Property (2).
\end{proof}

\begin{exmple} \label{necessaryconditions}
Example \ref{bdppexample} gives a good illustration of the $L$-trivial reduction map.  We will keep the notation established there.  Recall that we have a connecting family of movable curves $C$ such that $L \cdot C = 0$.   To improve the situation, we need to pass to a higher model on which the strict transform family of $C$ is no longer connecting.

Let $Y$ denote a resolution of the flop, so that we have maps $\phi: Y \to X$ and $\phi': Y \to X'$ corresponding to the blow-up of $T$ and $T'$ respectively.  Consider the strict transform family of $C$ on $Y$.  This strict transform family is no longer connecting; in fact, two points are connected by the strict transform family iff they lie in a fiber of the map $p \circ \phi': Y \to \mathbb{P}^{1}$.  Thus, the generic quotient of $L$ by the family defined by $C$ is precisely the Iitaka fibration for $L$.

Note one additional feature of this example: it shows that it is necessary to work with movable curves in order to construct the $L$-trivial reduction map.  If one took the quotient of $Y$ by all families of curves with $\phi^{*}L \cdot C = 0$, one would of course recover the quotient to a point.  Similarly, we must work with irreducible families $U_{i}$, since without this condition we could reconstruct the quotient to a point by adding another component to our family.
\end{exmple}

\begin{prop} \label{ltrivialreductionproperties}
Let $X$ be normal and $L$ pseudo-effective.  Let $\pi: X \dashrightarrow Z$ denote the $L$-trivial reduction map.
\begin{enumerate}
\item If $\kappa(X,L) \geq 0$ then the Iitaka fibration for $L$ factors birationally through $\pi$.
\item If $\phi: Y \to X$ is a birational map then the $\phi^{*}L$-trivial reduction map is birationally
equivalent to $\pi$.
\item If $L$ is nef then $\pi$ is birationally equivalent to the nef reduction map.
\end{enumerate}
\end{prop}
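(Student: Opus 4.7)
The overall plan is to deduce each part from the universal properties of $\pi$ already established together with the defining properties of the Iitaka and nef reduction maps. The main difficulty lies in (1); (2) and (3) will be essentially immediate translations.

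For (1), I would use the minimality property (3) of Theorem \ref{ltrivialreduction}: it suffices to verify that a morphism representative of the Iitaka fibration satisfies property (1). Choose a birational model $g \colon W \to X$ on which the Iitaka fibration is realized as a morphism $h \colon W \to I$ with connected fibers, together with a decomposition $g^{*}(mL) \sim h^{*}D_{m} + E_{m}$ for all sufficiently divisible $m$, where $D_{m}$ is big on $I$ and $E_{m}$ is effective with support in the fixed part of $|mg^{*}L|$. For a curve $C \subset W$ through a very general point with $\dim h(C) > 0$ I would argue: a very general point of $W$ avoids $\Supp E_{m}$, so $E_{m} \cdot C \geq 0$; and writing $D_{m} \sim_{\mathbb{Q}} A + E'$ with $A$ ample and $E'$ effective, the image $h_{*}C$ has strictly positive intersection with $A$ and nonnegative intersection with $E'$ since it passes through a very general point of $I$, giving $h^{*}D_{m} \cdot C > 0$. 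Thus $g^{*}L \cdot C > 0$. The subtle point I expect to be the main obstacle is ensuring that very general points of $W$ push forward to very general points of $I$; this follows because the preimage in $W$ of a countable union of proper closed subsets of $I$ is again a countable union of proper closed subsets, and as $m$ ranges over the positive integers the subvarieties $\Supp E_{m}$ and $\Supp E'$ contribute only countably many such subsets.

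For (2), I would invoke Corollary \ref{genericquotientbirationalinvariance}: a movable curve on $Y$ cannot lie in the $\phi$-exceptional locus (which has codimension at least one), so pushforward and strict transform give a correspondence between movable curves on $Y$ and on $X$, and the projection formula $L \cdot \phi_{*}C = \phi^{*}L \cdot C$ matches the triviality conditions. Hence the families defining the two reduction maps are strict transforms of one another, and Corollary \ref{genericquotientbirationalinvariance} identifies the generic quotients birationally. For (3), assuming $L$ is nef, $\phi^{*}L$ is nef on any smooth model $Y$. Since for nef divisors numerical dimension zero coincides with numerical triviality, property (2) of Theorem \ref{ltrivialreduction} upgrades to $\phi^{*}L|_{F} \equiv 0$ on a general fiber $F$; combined with property (1), this shows that $(Y,\pi)$ satisfies the defining conditions of the nef reduction map of \cite{8authors}. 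Both maps are characterized up to birational equivalence as having maximal target dimension among connected-fiber maps on which $L$ is positive along non-contracted curves through very general points, so by uniqueness they coincide.
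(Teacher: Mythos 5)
Your proposal is correct in spirit, and parts of it take a genuinely different route from the paper's proof, so a comparison is warranted.

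For part (1), the paper goes in the opposite direction from you: it observes that on a general fiber $F$ of the $L$-trivial reduction map, $0 \leq \kappa(\phi^{*}L|_{F}) \leq \nu(\phi^{*}L|_{F}) = 0$ (using property (2) of Theorem \ref{ltrivialreduction} and $\kappa \leq \nu$), and then invokes the universal property of the Iitaka fibration (if $\kappa(L|_F)=0$ on general fibers of a fibration $f$, the Iitaka fibration factors through $f$). You instead verify directly that a morphism model of the Iitaka fibration satisfies property (1) of Theorem \ref{ltrivialreduction}, using the decomposition of $g^{*}(mL)$ into pullback of a big divisor on the base plus an effective fixed part, and then apply the minimality property (3). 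Both are valid; the paper's is shorter because it outsources the work to Iitaka's universal property, while yours is more self-contained. A small note: you do not actually need countably many $m$'s here, since one sufficiently large and divisible $m$ suffices, and you can take $D_{m}$ to be very ample on the base directly (the hyperplane class under $\phi_{|mL|}$), avoiding the decomposition $D_m \sim_{\mathbb{Q}} A + E'$.

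For part (2), your argument is essentially identical to the paper's: cite Corollary \ref{genericquotientbirationalinvariance}.

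For part (3), you depart from the paper and here there is a real imprecision. The paper's argument avoids characterizing the nef reduction map altogether: it shows the nef reduction map (a proper quotient by all curves with $L\cdot C=0$) factors birationally through $\pi$ via Proposition \ref{genericquotientdominance}, and conversely that $\pi$ factors through the nef reduction map because $L$ is numerically trivial on a general fiber of the nef reduction map, hence movable curves there have $L\cdot C=0$. You instead try to show $\pi$ satisfies ``the defining conditions of the nef reduction map of \cite{8authors}'' and then invoke uniqueness. But the defining conditions in \cite{8authors} are stronger than what Theorem \ref{ltrivialreduction} gives you: they require triviality on \emph{all} compact fibers of the right dimension and positivity on non-contracted curves through \emph{general} (not \emph{very general}) points. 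Also, ``maximal target dimension among maps positive on non-contracted curves through very general points'' is not the stated characterization of either map. So to make your argument rigorous you would need either to upgrade ``very general'' to ``general'' for nef $L$, or to argue as the paper does by comparing the two quotients directly. The paper's approach sidesteps these subtleties and is cleaner here.
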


\begin{proof}
Let $\phi: Y \to X$ be a smooth birational model that resolves the $L$-trivial reduction map.
Note that $0 \leq \kappa(F,\phi^{*}L|_{F}) \leq \nu(F,\phi^{*}L|_{F}) = 0$ for a general fiber $F$ of the $L$-trivial reduction map.  Thus the Iitaka fibration factors birationally through $\pi$, showing Property (1).

Property (2) is a consequence of the birational invariance of the generic quotient as shown in
Corollary \ref{genericquotientbirationalinvariance}.

To show Property (3), recall that the nef reduction map is the proper quotient of $X$ by all curves satisfying $L \cdot C = 0$.  Thus, the nef reduction map factors birationally through $\pi$.  Conversely, recall that $L$ is numerically trivial along a general fiber $F$ of the nef reduction map (\cite{8authors}, Theorem 2.1).  Thus $L \cdot C = 0$ for a movable curve in a very general fiber, showing that the $L$-trivial reduction map factors birationally through the nef reduction map.
\end{proof}

\section{Pseudo-effective Reduction Map} \label{pseudoeffectivereductionmapsection}

We would like our reduction map to be the maximal quotient such that the numerical dimension of $L$ vanishes along a general fiber.  It turns out that the $L$-trivial reduction map does not satisfy this property.

\begin{exmple}
There is a classical example due to Zariski of a surface $S$ carrying an irreducible curve $C$ such that $C^{2} < 0$ but $C$ has positive intersection with every other curve on $X$.  In particular the $C$-trivial reduction map is the identity map.  However $P_{\sigma}(C) = 0$, so that the map from $S$ to a point still satisfies the condition $\nu(C|_{F})=0$ on the fiber $F=S$.  Thus the $C$-trivial reduction map is not the maximal quotient with this property.
\end{exmple}

In order to improve upon the $L$-trivial reduction map, we will take the generic quotient of a different set of curves.  Since we want to focus on the vanishing of $\nu$, the most natural restriction on our curves is $\nu_{X|C}(L) = 0$.  In this section, we reinterpret the condition $\nu_{X|C}(L) = 0$ and show how to construct the pseudo-effective reduction map.  This map was first constructed in \cite{eckl05}; we hope to clarify some subtleties in the presentation.  Although our techniques are algebraic, the two approaches are ``morally'' very similar.

\subsection{The Restricted Numerical Dimension for Curves}  \label{restrictedsigmadimensionsection}

Recall that the restricted numerical dimension can be measured by taking a limit of intersections over all birational models.  For movable curves $C$, it turns out that $\nu_{X|C}(L)$ can be measured by passing to a sufficiently high birational model.

\begin{thrm} \label{stablevanishingintersection}
Let $X$ be normal and let $L$ be a pseudo-effective divisor.  Suppose that $C$ is a movable curve very general in its family.  The following are equivalent:
\begin{enumerate}
\item $\nu_{X|C}(L)=0$.
\item There is a birational map $\phi:Y \to X$ such that $P_{\sigma}(\phi^{*}L) \cdot C' = 0$ where $C'$ denotes the strict transform of $C$.
\end{enumerate}
\end{thrm}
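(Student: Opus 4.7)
The plan is to show that for a very general movable curve $C$, the intersection number $P_{\sigma}(\phi^{*}L) \cdot C'$ does not depend on the choice of smooth birational model $\phi: Y \to X$ used to compute it. Granting this invariance, both (1) and (2) reduce to the single statement that this common value equals zero, so the theorem is immediate.

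The direction (2) $\Rightarrow$ (1) requires no invariance: if some $\phi$ has $P_{\sigma}(\phi^{*}L) \cdot C' = 0$, then the infimum in Definition \ref{restnumdimdefn} with $W = C$ is bounded above by zero. Since $P_{\sigma}(\phi^{*}L)$ pairs non-negatively with the movable class $C'$ by Theorem \ref{bdppmovablecurvetheorem}, the infimum is in fact exactly zero, and the $\dim W = 1$ case fails the positivity requirement, so $\nu_{X|C}(L) = 0$.

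For invariance, let $\phi_{1}: Y_{1} \to X$ and $\phi_{2}: Y_{2} \to X$ be two smooth models and choose a smooth common refinement $Z$ with maps $\psi_{i}: Z \to Y_{i}$. Because $C$ is very general in a dominating family, its strict transforms avoid every countable collection of codimension $\geq 2$ subvarieties on the relevant models; in particular they miss all exceptional divisors of the $\phi_{i}$ and $\psi_{i}$. The technical heart of the argument is the claim that for any birational morphism $\psi: Z \to Y$ of smooth varieties, the discrepancy $\psi^{*}P_{\sigma}(D) - P_{\sigma}(\psi^{*}D)$ is supported on $\psi$-exceptional prime divisors. One proves this by expanding both $N_{\sigma}$'s in terms of prime divisors and observing that the contributions from strict transforms cancel via the identity $\sigma_{\widetilde{\Gamma}}(\psi^{*}D) = \sigma_{\Gamma}(D)$, itself a consequence of the fact that a prime divisor determines a valuation of the function field which is independent of the birational model. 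Combined with the projection formula, this gives
\[
P_{\sigma}(\phi_{i}^{*}L) \cdot C'_{i} = \psi_{i}^{*}P_{\sigma}(\phi_{i}^{*}L) \cdot C'' = P_{\sigma}((\phi_{i}\circ \psi_{i})^{*}L) \cdot C''
\]
since the exceptional discrepancy has zero intersection with $C''$. Since $(\phi_{1}\circ \psi_{1})^{*}L = (\phi_{2}\circ \psi_{2})^{*}L$ on $Z$, the two models produce the same value, establishing invariance.

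The main obstacle is the exceptional-support statement for $\psi^{*}P_{\sigma}(D) - P_{\sigma}(\psi^{*}D)$, which must be extracted carefully from the foundational theory of divisorial Zariski decompositions reviewed in Section \ref{backgroundsection}; once this is in place, the direction (1) $\Rightarrow$ (2) follows by simply taking $\phi$ to be any smooth model of $X$, since invariance forces the constant value of the intersection to be equal to the vanishing infimum.
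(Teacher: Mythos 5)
Your argument for (2) $\Rightarrow$ (1) is fine (and slightly more direct than the paper's, which goes through the birational-invariance properties of $\nu_{X|V}$ rather than directly through the duality of Theorem \ref{bdppmovablecurvetheorem}). Your overall strategy for (1) $\Rightarrow$ (2), namely proving a model-independence statement for $P_{\sigma}(\phi^{*}L)\cdot C'$ and then reading off both directions, is also close in spirit to what the paper does. However, there is a real gap in the invariance step.

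The issue is a quantifier mismatch combined with a misidentification of which exceptional divisors you actually need to avoid. You first fix two models $Y_{1}$, $Y_{2}$ and a common refinement $Z$, and then invoke generality of $C$ to make the strict transforms miss the exceptional divisors of the $\psi_{i}$. But the exceptional loci depend on the models chosen, and as $\phi$ ranges over all smooth models their exceptional centers sweep out all of $X$; there is no single ``very general'' $C$ that avoids the exceptional locus of every possible birational modification. Moreover, the phrase ``strict transforms avoid every countable collection of codimension $\geq 2$ subvarieties, in particular miss all exceptional divisors'' conflates two different things: the exceptional divisors on $Z$ have codimension $1$, and a curve on $Z$ can perfectly well meet them; what is codimension $\geq 2$ is the exceptional \emph{locus} downstairs, and only for a prescribed, finite collection of morphisms.

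What you actually need, and what the paper's proof supplies, is a model-independent countable bad locus. The correct observation is that the only $\psi$-exceptional divisors $E$ contributing to the discrepancy $\psi^{*}P_{\sigma}(D) - P_{\sigma}(\psi^{*}D)$ are those with $\sigma_{E}(\psi^{*}P_{\sigma}(D))>0$, and any such $E$ has center contained in $\mathbf{B}_{-}(P_{\sigma}(D))$, a \emph{fixed} countable union of codimension $\geq 2$ subvarieties independent of $\psi$. The paper arranges things by first passing to a flattening $\phi: Y \to X$ of the family of deformations of $C$, so that a very general member $C'$ of the strict transform family avoids $\mathbf{B}_{-}(P_{\sigma}(\phi^{*}L))$; this gives stability of the intersection number over all \emph{higher} models of $Y$, which is exactly enough to identify the infimum in Definition \ref{restnumdimdefn} with $P_{\sigma}(\phi^{*}L) \cdot C'$. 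Without this refinement your invariance claim is not justified, and the proof of (1) $\Rightarrow$ (2) does not close.
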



In some sense, this statement is dual to the fact that $N_{\sigma}(L)$ has only finitely many components.

\begin{proof}[Proof of Theorem \ref{stablevanishingintersection}:]
We start with the implication (2) $\Rightarrow$ (1).  Note that (2) implies that $\nu_{Y|C'}(P_{\sigma}(\phi^{*}L)) = 0$.  (1) follows immediately from Proposition \ref{restnumdimproperties} which shows that the restricted numerical dimension is invariant under passing to birational transformations and replacing $L$ by $P_{\sigma}(L)$.

Conversely, let $w: \mathcal{C} \to X$ be a surjective generically finite morphism where $\mathcal{C}$ is a family of curves found by deforming $C$.  Let $\phi: Y \to X$ be a smooth birational map flattening $w$ and let $w': \mathcal{C}' \to Y$ denote the (flat) normalized strict transform family.  Then $w'^{-1}(\mathbf{B}_{-}(P_{\sigma}(\phi^{*}L)))$ is a countable union of subvarieties of codimension at least $2$.  Thus it does not intersect a very general curve $C'$ in our strict transform family.

Since $C'$ avoids $\mathbf{B}_{-}(P_{\sigma}(\phi^{*}L))$, we know that for any higher birational model $\psi: \widetilde{Y} \to Y$ we have
\begin{equation*}
P_{\sigma}(\psi^{*}\phi^{*}L) \cdot \widetilde{C} = P_{\sigma}(\phi^{*}L) \cdot C'
\end{equation*}
where $\widetilde{C}$ denotes the strict transform of $C'$.  Thus, the infimum of Definition \ref{restnumdimdefn} is equal to $P_{\sigma}(\phi^{*}L) \cdot C'$, proving the theorem.
\end{proof}

\subsection{Definition and Basic Properties}

We will define the pseudo-effective reduction map for a pseudo-effective divisor $L$ by taking
the generic quotient with respect to all families of movable curves whose very general member satisfies $\nu_{X|C}(L) = 0$.

\begin{proof}[Proof of Theorem \ref{psefreduction}:]
The pseudo-effective reduction map is the generic quotient
associated to all families of movable curves such that $\nu_{X|C}(L) = 0$ for
a very general member $C$.

In order to prove property (1), we will need to compare against the $L$-trivial reduction map.
First, identify a finite set of families of curves $\{ \mathcal{C}_{i} \}$ that define the generic
quotient.  By Theorem \ref{stablevanishingintersection}, for any particular family $\mathcal{C}_{i}$ there is a model $\psi_{i}: X_{i} \to X$ such that $P_{\sigma}(\psi_{i}^{*}L) \cdot C = 0$ for the strict transform of a member of $\mathcal{C}_{i}$.  Of course, this is also true for any higher model, so repeating the process we find one model $\psi: \widetilde{X} \to X$ so that $P_{\sigma}(\psi^{*}L)$ vanishes on every strict transform family.  Then the pseudo-effective reduction map is birationally equivalent to the $P_{\sigma}(\psi^{*}L)$-trivial reduction map.  Applying Theorem \ref{ltrivialreduction}, we find a model $\phi: Y \to X$ and a morphism $\pi: Y \to Z$ such that $\nu(P_{\sigma}(\phi^{*}L)|_{F}) = 0$ for the general fiber $F$ of $\pi$.  Since the restriction of $P_{\sigma}(\phi^{*}L)$ to a very general fiber is movable, Theorem \ref{numdimproperties} implies that $P_{\sigma}(\phi^{*}L)|_{F} \equiv 0$ on the very general fiber.  Thus $P_{\sigma}(\phi^{*}L)$ has vanishing intersection with any $\pi$-vertical movable curve, so Theorem \ref{relativebdpp} shows that $P_{\sigma}(\phi^{*}L)$ has vanishing intersection with any class in the subspace generated by $\overline{NM}_{1}(Y/Z)$.  Since the inclusion map for a general fiber $F$ takes $N_{1}(F)$ into this subspace (again by Theorem \ref{relativebdpp}), we obtain property (1).

Suppose that $\phi': Y' \to X$ is another birational map and $\pi': Y' \to Z'$ is another morphism with connected fibers satisfying (1).  In particular,
\begin{equation*}
\nu_{Y'|C}(\phi'^{*}L) = \nu_{Y'|C}(P_{\sigma}(\phi'^{*}L))  = 0
\end{equation*}
for a very general curve $C$ in a very general fiber $F'$ of $\pi'$.  Note that $\nu_{X|\phi'(C)}(L)=0$ as well.  Since the pseudo-effective reduction map is defined by taking the generic quotient by all such curves, $\pi$ factors birationally through $\pi'$, showing property (2).
\end{proof}

For emphasis, we extract a useful fact from the previous argument.

\begin{cor} \label{psefreductionisltrivial}
Let $X$ be normal and let $L$ be a pseudo-effective divisor.  There is some birational model $\phi: Y \to X$ such that the pseudo-effective reduction map for $L$ is birationally equivalent to the $P_{\sigma}(\phi^{*}L)$-trivial
reduction map.
\end{cor}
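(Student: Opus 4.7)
The plan is to extract the argument already carried out in the proof of Theorem \ref{psefreduction}, isolating the step that compares the two kinds of reduction maps on a sufficiently high birational model. The key input is Theorem \ref{stablevanishingintersection}: the condition $\nu_{X|C}(L) = 0$ for a very general movable curve $C$ can always be upgraded to an honest numerical intersection condition $P_{\sigma}(\psi^{*}L) \cdot C' = 0$ after passing to a suitable birational model $\psi$.

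First I would use the definition of the generic quotient from Section \ref{genericquotientmapssection} (specifically Remark \ref{infinitefamilies}) to find a finite collection of families of movable curves $\{\mathcal{C}_{i}\}_{i=1}^{k}$, each with $\nu_{X|C_i}(L) = 0$ for a very general member $C_{i}$, whose generic quotient coincides (up to birational equivalence) with the pseudo-effective reduction map. Next, for each $i$, Theorem \ref{stablevanishingintersection} produces a birational model $\psi_{i}: X_{i} \to X$ such that $P_{\sigma}(\psi_{i}^{*}L) \cdot C_{i}' = 0$ for the strict transform $C_{i}'$; since this vanishing is preserved on all higher models (by another application of Theorem \ref{stablevanishingintersection} in the reverse direction, or simply by pulling back and using birational invariance of $P_{\sigma}$ under strict transforms of curves avoiding $\mathbf{B}_{-}$), I can pass to a common higher model $\phi: Y \to X$ dominating every $\psi_{i}$ on which $P_{\sigma}(\phi^{*}L) \cdot C' = 0$ for the strict transform of a very general member of every $\mathcal{C}_{i}$.

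On this model, by Corollary \ref{genericquotientbirationalinvariance}, the pseudo-effective reduction map is birationally equivalent to the generic quotient of $Y$ by the strict transform families $\mathcal{C}_{i}'$. But each of these strict transform families consists of movable curves $C'$ with $P_{\sigma}(\phi^{*}L) \cdot C' = 0$, so they are among the families used to define the $P_{\sigma}(\phi^{*}L)$-trivial reduction map. Conversely, any family of movable curves $D$ with $P_{\sigma}(\phi^{*}L) \cdot D = 0$ satisfies $\nu_{X|\phi(D)}(L) = \nu_{Y|D}(P_{\sigma}(\phi^{*}L)) = 0$ by Theorem \ref{restnumdimproperties}, so its image family is among those defining the pseudo-effective reduction map. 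Hence the two generic quotients are built from the same families and are birationally equivalent.

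The only step that requires care is the passage to a common model on which the numerical triviality of $P_{\sigma}(\phi^{*}L)$ persists against all the strict transforms simultaneously; this is where the finiteness of the defining collection $\{\mathcal{C}_{i}\}$ is essential, and where one must invoke Theorem \ref{stablevanishingintersection} to switch between the asymptotic definition of $\nu_{X|C}$ and a concrete intersection number on a fixed model. Once that common model is in hand, the equivalence of the two generic quotients is formal from the birational invariance of the construction in Section \ref{genericquotientmapssection}.
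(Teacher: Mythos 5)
Your proposal is correct and follows essentially the same route as the paper, which extracts the corollary verbatim from the argument in the proof of Theorem \ref{psefreduction}: realize the generic quotient by a finite collection of curve families, apply Theorem \ref{stablevanishingintersection} to each to obtain a model where $P_{\sigma}\cdot C'=0$, pass to a common higher model using the stability of that vanishing, and conclude via Corollary \ref{genericquotientbirationalinvariance}. The only nit is that your converse direction (turning $P_{\sigma}(\phi^{*}L)\cdot D=0$ into $\nu_{X|\phi(D)}(L)=0$) really invokes the implication $(2)\Rightarrow(1)$ of Theorem \ref{stablevanishingintersection} in addition to Theorem \ref{restnumdimproperties}, but this does not affect the substance of the argument.
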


The pseudo-effective reduction map is also universal with respect to two other weaker properties.

\begin{thrm} \label{weakpsefreduction}
Let $X$ be a normal variety and let $L$ be a pseudo-effective divisor.  Let $\phi: Y \to X$ be a smooth model and $\pi: Y \to Z$ a morphism realizing the pseudo-effective reduction map as in Theorem \ref{psefreduction}.  Then
\begin{enumerate}
\item $\nu(P_{\sigma}(\phi^{*}L)|_{F}) = 0$ for a general fiber $F$ of $\pi$, and
\item $\nu_{X|F}(\phi^{*}L) = 0$ for a general fiber $F$ of $\pi$.
\end{enumerate}
Moreover, if $\phi': Y' \to X$ is a smooth model and $\pi': Y' \to Z'$ is a morphism with connected fibers satisfying either (1) or (2), then the pseudo-effective reduction map for $L$ factors birationally through $\pi'$.
\end{thrm}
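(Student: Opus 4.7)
Properties (1) and (2) follow quickly from the known machinery. By Theorem \ref{psefreduction}(1) we have $P_{\sigma}(\phi^{*}L)|_{F} \equiv 0$ for a general fiber $F$ of $\pi$, so Theorem \ref{numdimproperties}(3) gives property (1). Combining parts (3) and (2) of Theorem \ref{restnumdimproperties} then yields
\[
\nu_{Y|F}(\phi^{*}L) = \nu_{Y|F}(P_{\sigma}(\phi^{*}L)) \leq \nu(P_{\sigma}(\phi^{*}L)|_{F}) = 0,
\]
which is property (2).

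For the universality statement, the same inequality shows that any $\pi'$ satisfying (1) also satisfies (2), so it suffices to treat the case where $\pi': Y' \to Z'$ satisfies (2), i.e., $\nu_{Y'|F'}(\phi'^{*}L) = 0$ for a general fiber $F'$. The strategy is to produce a family of movable curves with vanishing restricted numerical dimension which is connecting on each general fiber of $\pi'$; Theorem \ref{explicitpsefreduction}, which realizes $\pi$ as the generic quotient by all such curves, then forces $\pi$ to factor birationally through $\pi'$.

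Concretely, I would fix very ample divisors $H_{1}, \ldots, H_{d-1}$ on $Y'$ (with $d = \dim F'$) of sufficiently large degree, and consider the family of curves $C = H_{1} \cap \cdots \cap H_{d-1} \cap F'$ as the $H_{i}$ vary in their complete linear systems and $F'$ varies in the fibers of $\pi'$. This is a covering family of movable curves on $Y'$ whose members connect each general fiber. Since a very general member $C$ is itself a very general intersection of $F'$ with very ample divisors, the hypothesis $\nu_{Y'|F'}(\phi'^{*}L) = 0$ unwinds via Definition \ref{restnumdimdefn} to give $\inf_{\mu} \vol(P_{\sigma}(\mu^{*}\phi'^{*}L)|_{\widetilde{C}}) = 0$; since $C$ is a curve, the same definition immediately forces $\nu_{Y'|C}(\phi'^{*}L) = 0$. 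Birational invariance (Theorem \ref{restnumdimproperties}(4)) then yields $\nu_{X|\phi'(C)}(L) = 0$ for the image $\phi'(C) \subset X$.

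Thus the images $\phi'(C)$ form a family of movable curves satisfying $\nu_{X|\phi'(C)}(L) = 0$; by Theorem \ref{explicitpsefreduction} they appear in the collection of curves whose generic quotient is $\pi$, so $\pi$ contracts each such curve. Since the family is connecting on each general fiber $F'$ of $\pi'$, the map $\pi$ is constant on general fibers of $\pi'$, and hence $\pi$ factors birationally through $\pi'$. The main delicate point in the plan is the passage from $\nu_{Y'|F'} = 0$ to $\nu_{Y'|C} = 0$ for very general curves $C \subset F'$, which is a direct unwinding of the definition; the remainder is formal, using the universal property of the generic quotient.
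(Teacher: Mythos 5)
Your proposal is correct and follows essentially the same route as the paper: derive (1) from Theorem \ref{psefreduction}, obtain (2) from (1) via $\nu_{Y|F}(\phi^{*}L) = \nu_{Y|F}(P_{\sigma}(\phi^{*}L)) \leq \nu(P_{\sigma}(\phi^{*}L)|_{F})$, and then pass from $\nu_{Y'|F'}(\phi'^{*}L)=0$ to $\nu_{Y'|C}(\phi'^{*}L)=0$ for very general curves $C\subset F'$ that are complete intersections with very ample divisors, invoking the defining description of the pseudo-effective reduction map as a generic quotient. You are slightly more explicit than the paper about constructing the covering connecting family of curves (varying both the $H_i$ and the fiber $F'$), but the key idea—that vanishing of the restricted numerical dimension on $F'$ descends to its very general complete-intersection curves, which then appear among the curves defining the generic quotient—is identical.
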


\begin{proof}
The fact that the pseudo-effective reduction map satisfies condition (1) was shown in the proof of Theorem \ref{psefreduction}.  Note that any map satisfying condition (1) also satisfies condition (2) since
\begin{equation*}
\nu_{X|F}(\phi'^{*}L) = \nu_{X|F}(P_{\sigma}(\phi'^{*}L)) \leq \nu(F,P_{\sigma}(\phi'^{*}L)|_{F}).
\end{equation*}
Thus it suffices to see that the pseudo-effective reduction map is the maximal quotient satisfying condition (2).  Any map $\pi'$ satisfying $\nu_{X|F}(\phi'^{*}L) = 0$ also satisfies $\nu_{X|C}(\phi'^{*}L) = 0$ for a very general curve $C$ in a general fiber $F$.  We conclude by noting that the pseudo-effective reduction map is the generic quotient by all such curves.
\end{proof}

Many properties of the pseudo-effective reduction map follow from the corresponding properties for the $L$-trivial reduction map.  Property (3) is proved in \cite{eckl05} Proposition 4.5 and the others are implicit in Eckl's work.

\begin{prop} \label{psefreductionproperties}
Let $X$ be normal and let $L$ be a pseudo-effective divisor.
\begin{enumerate}
\item If $\kappa(L) \geq 0$ then the Iitaka fibration for $L$ factors birationally through
the pseudo-effective reduction map for $L$.
\item If $\mu: W \to X$ is a birational map then the pseudo-effective reduction map for $L$ is
birationally equivalent to the pseudo-effective reduction map for $\mu^{*}L$.
\item If $L$ is nef then the pseudo-effective reduction map for $L$ is birationally equivalent to
the nef reduction map.
\item The pseudo-effective reduction map for $L$ is birationally equivalent to the pseudo-effective reduction map for $P_{\sigma}(L)$.
\end{enumerate}
\end{prop}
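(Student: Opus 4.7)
The plan is to derive each of the four statements from the corresponding property of the $L$-trivial reduction map established in Proposition \ref{ltrivialreductionproperties}, using Corollary \ref{psefreductionisltrivial} as the bridge between the two constructions. Statements (2) and (4) can be read off directly from the definition of the pseudo-effective reduction map as a generic quotient, while (1) and (3) require one extra step of translating between $L$ and $P_{\sigma}(\phi^{*}L)$.

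I would handle (4) and (2) first. For (4), after passing to a smooth birational model, Theorem \ref{restnumdimproperties}(3) gives $\nu_{X|C}(L) = \nu_{X|C}(P_{\sigma}(L))$ for every movable curve $C$, so the two reduction maps are defined as generic quotients by the same collections of families and are therefore birationally equivalent. For (2), Theorem \ref{restnumdimproperties}(4) identifies the strict transforms under $\mu$ of movable curves $C$ on $X$ satisfying $\nu_{X|C}(L)=0$ with precisely the movable curves $C'$ on $W$ satisfying $\nu_{W|C'}(\mu^{*}L)=0$, after which Corollary \ref{genericquotientbirationalinvariance} yields the birational equivalence of the generic quotients.

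For (1), I would invoke Corollary \ref{psefreductionisltrivial} to fix a model $\phi: Y \to X$ so that the pseudo-effective reduction map for $L$ is birationally equivalent to the $P_{\sigma}(\phi^{*}L)$-trivial reduction map $\pi$. The third property of the divisorial Zariski decomposition provides the isomorphism $H^{0}(Y, \lfloor m P_{\sigma}(\phi^{*}L) \rfloor) \cong H^{0}(Y, \lfloor m \phi^{*}L \rfloor)$ for every $m \geq 0$, forcing the Iitaka fibrations of $P_{\sigma}(\phi^{*}L)$ and $\phi^{*}L$ to agree birationally. Since $\kappa(L) \geq 0$ then gives $\kappa(P_{\sigma}(\phi^{*}L)) \geq 0$, Proposition \ref{ltrivialreductionproperties}(1) applied to $P_{\sigma}(\phi^{*}L)$ factors the latter's Iitaka fibration through $\pi$, hence also the Iitaka fibration of $L$.

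For (3), the key intermediate fact is that $N_{\sigma}(L)=0$ whenever $L$ is nef: for any prime divisor $\Gamma$ and any ample $A$, the divisor $L + \epsilon A$ is ample, hence $\mathbb{R}$-linearly equivalent to an effective divisor whose support avoids $\Gamma$, so $\sigma_{\Gamma}(L) = 0$. Since nefness is preserved under pullback, we likewise have $P_{\sigma}(\phi^{*}L) = \phi^{*}L$. Corollary \ref{psefreductionisltrivial} then identifies the pseudo-effective reduction map with the $\phi^{*}L$-trivial reduction map; Proposition \ref{ltrivialreductionproperties}(2) descends this to the $L$-trivial reduction map on $X$; and Proposition \ref{ltrivialreductionproperties}(3) identifies the latter with the nef reduction map. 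The main technical obstacle throughout is the bookkeeping between the birational models used to define $P_{\sigma}$, the generic quotient, and the Iitaka fibration; once these are aligned, each part is a clean consequence of already-established results.
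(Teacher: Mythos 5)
Your proof is correct and takes essentially the same route as the paper, whose own proof is the one-line citation of Proposition \ref{ltrivialreductionproperties} and Corollary \ref{psefreductionisltrivial}; you have simply filled in the derivations. The one minor departure is in parts (2) and (4), where you argue directly from the generic-quotient definition of the pseudo-effective reduction map (invoking Theorem \ref{restnumdimproperties} parts (3) and (4) together with Corollary \ref{genericquotientbirationalinvariance}) rather than routing through Corollary \ref{psefreductionisltrivial}. This is a perfectly valid shortcut---for (4) in particular it is cleaner, since the families of curves with $\nu_{X|C}(L)=0$ and $\nu_{X|C}(P_{\sigma}(L))=0$ literally coincide---and it makes explicit that (2) and (4) are independent of the subtler Corollary \ref{psefreductionisltrivial}, which is only genuinely needed to transfer the Iitaka-fibration and nefness arguments in (1) and (3). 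Your auxiliary observations (that $N_{\sigma}$ vanishes for nef divisors, and that the divisorial Zariski decomposition preserves section rings so Iitaka fibrations agree) are both correct and are the same facts the paper's terse proof implicitly relies on.
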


\begin{proof}
These are all consequences of Proposition \ref{ltrivialreductionproperties} and Corollary \ref{psefreductionisltrivial}.
\end{proof}

\begin{exmple}
Suppose that $L$ has a Zariski decomposition, i.e.~there is a birational map $\phi: Y \to X$ such that $P_{\sigma}(\phi^{*}L)$ is nef.  Then the
pseudo-effective reduction map for $L$ is birationally equivalent to the nef reduction map for $P_{\sigma}(\phi^{*}L)$.
\end{exmple}

The following theorem of \cite{nakayama04} is very useful in our situation.

\begin{thrm}[\cite{nakayama04}, V.2.26] \label{relativesigmadimension0}
Let $f: X \to Z$ be a morphism of normal varieties and let $L$ be a pseudo-effective divisor on $X$ such that $\kappa(F,L|_{F}) = 0$ and  $\nu(L|_{F}) = 0$ for a general fiber $F$ of $f$.  Then there exists a morphism $g: W \to T$ of smooth varieties birationally equivalent to $f$ (with birational map $\mu: W \to X$) and a divisor $D$ on $T$ such that $P_{\sigma}(\mu^{*}L) \sim_{\mathbb{Q}} P_{\sigma}(g^{*}D)$.  If $L$ is a $\mathbb{Q}$-divisor, then $D$ is a $\mathbb{Q}$-divisor as well.
\end{thrm}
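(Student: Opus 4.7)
The plan is to use $\kappa(L|_F) = 0$ to extract a canonical section of $mL$ along fibers of $f$, push it down to a divisor $D$ on a birational model of $Z$, and then use $\nu(L|_F) = 0$ to show that the discrepancy between $\mu^{*}L$ and $g^{*}D$ lives entirely in $N_{\sigma}(\mu^{*}L)$. First, since $\kappa(L|_F) = 0$ for general $F$, fix $m > 0$ with $h^{0}(F, mL|_F) = 1$, giving a unique effective divisor $D_F \in |mL|_F|$. The hypothesis $\nu(L|_F) = 0$ gives $P_{\sigma}(L|_F) \equiv 0$ by Theorem \ref{numdimproperties}(3), so $\tfrac{1}{m}D_F$ is the unique effective $\mathbb{Q}$-representative of the class of $L|_F$ and is numerically equal to $N_{\sigma}(L|_F)$.

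Next I would construct $W$, $T$, $g$, and $D$. By semicontinuity $f_{*}\mathcal{O}_{X}(mL)$ has generic rank one. Combining Raynaud's flattening (Theorem \ref{hironakaflattening}) with resolutions of singularities, produce smooth models $\mu: W \to X$ and $\psi: T \to Z$ and a morphism $g: W \to T$ birationally equivalent to $f$ such that the reflexive hull of $g_{*}\mathcal{O}_{W}(m\mu^{*}L)$ is an invertible sheaf $\mathcal{O}_{T}(M)$ for some divisor $M$ on $T$ (using smoothness of $T$ to extend across codimension two). The adjoint inclusion $g^{*}\mathcal{O}_{T}(M) \hookrightarrow \mathcal{O}_{W}(m\mu^{*}L)$ determines an effective divisor $E$ on $W$ with $m\mu^{*}L \sim E + g^{*}M$ and $E|_F = mD_F$ for general $F$. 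After further blow-ups of $T$ to absorb the vertical components of $E$ into $g^{*}M$, I may assume $E$ is horizontal over $T$. Set $D = \tfrac{1}{m}M$.

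It remains to show $P_{\sigma}(\mu^{*}L) \sim_{\mathbb{Q}} P_{\sigma}(g^{*}D)$. Starting from $\mu^{*}L \sim_{\mathbb{Q}} \tfrac{1}{m}E + g^{*}D$, it suffices to prove
\begin{equation*}
N_{\sigma}(\mu^{*}L) = \tfrac{1}{m}E + N_{\sigma}(g^{*}D),
\end{equation*}
which I would verify by analyzing $\sigma_{\Gamma}$ prime by prime. For a vertical prime $\Gamma$, the horizontal effective divisor $\tfrac{1}{m}E$ contributes nothing to $\sigma_{\Gamma}$, so the identity reduces to $\sigma_{\Gamma}(\mu^{*}L) = \sigma_{\Gamma}(g^{*}D)$, which follows from the stability of $\sigma_{\Gamma}$ under modification by an effective divisor with no $\Gamma$-component. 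For a horizontal prime $\Gamma$, $g^{*}D$ is vertical and so contributes nothing, while Nakayama's fiber restriction formula gives $\sigma_{\Gamma}(\mu^{*}L) = \sigma_{\Gamma|_F}(L|_F) = \mathrm{mult}_{\Gamma|_F}(\tfrac{1}{m}D_F) = \mathrm{mult}_{\Gamma}(\tfrac{1}{m}E)$.

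The main obstacle is the horizontal case. One must justify that the global asymptotic order $\sigma_{\Gamma}(\mu^{*}L)$ is genuinely computed by restriction to a general fiber, with no extra contributions from transverse deformations under $\epsilon$-perturbations $\mu^{*}L + \epsilon A$. This is precisely where $\nu(L|_F) = 0$ is essential: the vanishing of the fiber-wise positive part forces $L|_F$ to behave rigidly, so the global $\sigma_{\Gamma}$ is accounted for entirely by the unique fiber-wise effective representative, rather than being reduced by positive transverse contributions. Finally, if $L$ is a $\mathbb{Q}$-divisor then $m\mu^{*}L$ is integral, hence $M$ is integral and $D = \tfrac{1}{m}M$ is a $\mathbb{Q}$-divisor as required.
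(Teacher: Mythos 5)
The paper does not prove this statement: it is quoted verbatim from Nakayama's book (\cite{nakayama04}, V.2.26) and used as a black box, so there is no ``paper's own proof'' to compare against. Your reconstruction of Nakayama's argument has a sensible skeleton --- extract the unique fiberwise effective representative from $\kappa(L|_F)=0$, push down a rank-one sheaf to produce $D$ on a flattened base, and reduce everything to the prime-by-prime identity $N_{\sigma}(\mu^{*}L) = \tfrac{1}{m}E + N_{\sigma}(g^{*}D)$ --- but the two lemmas you invoke to close that identity are not true as stated, and they are precisely where the content of the theorem lives.

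For vertical primes $\Gamma$, the claim that $\sigma_{\Gamma}(g^{*}D + \tfrac{1}{m}E) = \sigma_{\Gamma}(g^{*}D)$ because $E$ has no $\Gamma$-component is not a general principle. Convexity of $\sigma_{\Gamma}$ together with $\sigma_{\Gamma}(E) \leq \mathrm{mult}_{\Gamma}(E) = 0$ gives only $\sigma_{\Gamma}(\mu^{*}L) \leq \sigma_{\Gamma}(g^{*}D)$; adding a positive effective divisor can genuinely shrink the fixed part along $\Gamma$, and you need the reverse inequality. For horizontal primes $\Gamma$, what is available is again one-sided: since the image of $H^{0}(X, m(L+\epsilon A))$ in $H^{0}(F, m(L+\epsilon A)|_{F})$ is only a subsystem, one gets $\sigma_{\Gamma}(\mu^{*}L) \geq \sigma_{\Gamma|_F}(\mu^{*}L|_{F})$, not equality. (This lower bound is enough, combined with $N_{\sigma}(g^{*}D + \tfrac{1}{m}E) \leq N_{\sigma}(g^{*}D) + \tfrac{1}{m}E$, to handle the horizontal part --- but the vertical direction remains open.) There is also a glossed-over step: the vertical part of $E$ need not lie over divisors of $T$, so it cannot simply be ``absorbed into $g^{*}M$'' by blowing up $T$; one has to separately control $g$-exceptional components whose image has codimension at least two. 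These are exactly the delicate points that Nakayama's proof handles with his machinery around III.5.15 and V.2.26, and without them the argument does not close.
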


As a consequence, when $\kappa(L) \geq 0$ all the interesting geometry of $L$ can be detected on the base of the pseudo-effective reduction map:

\begin{cor}
Let $X$ be a normal variety and let $L$ be a divisor with $\kappa(L) \geq 0$.  Denote the pseudo-effective reduction map for $L$ by $\pi: X \dashrightarrow Z$.  There is a morphism $g: W \to T$ birationally equivalent to $\pi$ (with birational map $\mu: W \to X$), a divisor $D$ on $T$, and an integer $m \geq 1$ such that
\begin{enumerate}
\item $mP_{\sigma}(g^{*}D) \leq m\phi^{*}L$,
\item the degree $m$ truncations of the section rings $R(X,L)$ and $R(T,D)$ coincide, i.e.
\begin{equation*}
\bigoplus_{j} H^{0}(X,\mathcal{O}_{X}(\lfloor jmL \rfloor)) \cong \bigoplus_{j} H^{0}(T,\mathcal{O}_{T}(\lfloor jmD \rfloor)),
\end{equation*}
\item $\nu(T,D) = \nu(X,L)$.
\end{enumerate}
\end{cor}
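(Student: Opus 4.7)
The plan is to apply Nakayama's Theorem \ref{relativesigmadimension0} to the pseudo-effective reduction map. First I invoke Theorem \ref{psefreduction} to obtain a smooth model $\phi : Y \to X$ and a morphism $\pi : Y \to Z$ realising the pseudo-effective reduction map, and then feed the pair $(\pi, \phi^{*}L)$ into Theorem \ref{relativesigmadimension0}; its conclusion directly produces a morphism $g : W \to T$ of smooth varieties birationally equivalent to $\pi$, a birational map $\mu : W \to X$, and a divisor $D$ on $T$ satisfying $P_{\sigma}(\mu^{*}L) \sim_{\mathbb{Q}} P_{\sigma}(g^{*}D)$. Since $g$ is birationally equivalent to $\pi$, it realises the pseudo-effective reduction map up to birational equivalence.

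Before appealing to Theorem \ref{relativesigmadimension0} I must verify its hypotheses on a general fiber $F$ of $\pi$: that $\kappa(F, \phi^{*}L|_{F}) = 0$ and $\nu(\phi^{*}L|_{F}) = 0$. For the Kodaira dimension, Proposition \ref{psefreductionproperties}(1) says that the Iitaka fibration factors birationally through $\pi$, so a very general $\pi$-fiber $F$ lies inside a very general Iitaka fiber $F_{I}$ with $\kappa(\phi^{*}L|_{F_{I}}) = 0$; restricting an effective divisor in $|mL|_{F_{I}}|$ (which cannot contain $F$ for sufficiently general choices) produces an effective divisor on $F$, giving $\kappa(\phi^{*}L|_{F}) \geq 0$, and the inequality $\kappa \leq \nu$ from Theorem \ref{numdimproperties}(2) forces $\kappa = 0$ once the numerical dimension vanishes. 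For the numerical dimension, the proof of Theorem \ref{psefreduction} already establishes $P_{\sigma}(\phi^{*}L)|_{F} \equiv 0$ on a very general fiber; by passing if needed to a higher birational model on which the divisorial part of $\mathbf{B}_{-}(\phi^{*}L)$ restricts cleanly, the restriction $N_{\sigma}(\phi^{*}L)|_{F}$ lies in $\mathbf{B}_{-}(\phi^{*}L|_{F})$, so $P_{\sigma}(\phi^{*}L|_{F}) \equiv 0$ and Theorem \ref{numdimproperties}(3) yields $\nu(\phi^{*}L|_{F}) = 0$.

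With Nakayama's theorem applied, the three stated properties fall out by bookkeeping. For (1) I choose $m$ large and divisible enough that $mP_{\sigma}(\mu^{*}L)$ and $mP_{\sigma}(g^{*}D)$ are both integral and linearly equivalent; since $mP_{\sigma}(\mu^{*}L) \leq m\mu^{*}L$ (because $N_{\sigma} \geq 0$), I may replace $D$ by an appropriate representative within its $\mathbb{Q}$-linear equivalence class to arrange $mP_{\sigma}(g^{*}D) \leq m\mu^{*}L$ as actual divisors. For (2) I chain the isomorphisms
\begin{equation*}
H^{0}(X, jmL) \cong H^{0}(W, jmP_{\sigma}(\mu^{*}L)) \cong H^{0}(W, jmP_{\sigma}(g^{*}D)) \cong H^{0}(W, jmg^{*}D) \cong H^{0}(T, jmD),
\end{equation*}
using birational invariance of $H^{0}$, the Nakayama identity equating sections of $jmP_{\sigma}(-)$ and $jm(-)$, the $\mathbb{Q}$-linear equivalence of positive parts, and $g_{*}\mathcal{O}_{W} = \mathcal{O}_{T}$ (from connectedness of fibers). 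For (3) I run the parallel numerical chain
\begin{equation*}
\nu(X,L) = \nu(W, \mu^{*}L) = \nu(W, P_{\sigma}(\mu^{*}L)) = \nu(W, P_{\sigma}(g^{*}D)) = \nu(W, g^{*}D) = \nu(T, D),
\end{equation*}
via parts (4) and (5) of Theorem \ref{numdimproperties} together with the standard preservation of $\nu$ under pullback along surjective morphisms with connected fibers. The step I expect to be most delicate is promoting $\nu(P_{\sigma}(\phi^{*}L)|_{F}) = 0$ to $\nu(\phi^{*}L|_{F}) = 0$, since this requires genuine control of how $N_{\sigma}(\phi^{*}L)$ restricts to a general fiber on a suitably high birational model.
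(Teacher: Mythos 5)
Your proposal diverges from the paper in a crucial way, and the divergence introduces a genuine gap. The paper applies Theorem \ref{relativesigmadimension0} to the divisor $P_{\sigma}(\phi^{*}L)$, for which both hypotheses ($\kappa(F,\cdot|_{F})=0$ and $\nu(\cdot|_{F})=0$ on a general fiber) are immediate from Theorem \ref{weakpsefreduction}(1) and the fact that the Iitaka fibration factors through $\pi$. You instead apply the theorem to $\phi^{*}L$ itself, which forces you to establish the stronger statement $\nu(\phi^{*}L|_{F})=0$. That is exactly condition (2) of Theorem \ref{abundantequivalence}, which the paper proves is \emph{equivalent to abundance} of $L$ — a strictly stronger hypothesis than the $\kappa(L)\geq 0$ assumed in the corollary. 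Your attempt to argue it by ``passing to a higher model on which the divisorial part of $\mathbf{B}_{-}(\phi^{*}L)$ restricts cleanly'' does not work: the divisorial Zariski decomposition does not commute with restriction to fibers, $N_{\sigma}(\phi^{*}L)|_{F}$ need not lie in $\mathbf{B}_{-}(\phi^{*}L|_{F})$, and blowing up only worsens the discrepancy. The paper's own Example \ref{fiberwise} exists precisely to illustrate that $N_{\sigma}(\phi^{*}L)|_{F}$ can be ample on a general fiber of the pseudo-effective reduction map, so this is a real obstruction and not a technicality.

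The fix is simple and is exactly what the paper does: feed $P_{\sigma}(\phi^{*}L)$, not $\phi^{*}L$, into Theorem \ref{relativesigmadimension0}. The hypotheses are then $\nu(P_{\sigma}(\phi^{*}L)|_{F})=0$ (Theorem \ref{weakpsefreduction}(1)) and $\kappa(F,P_{\sigma}(\phi^{*}L)|_{F})=0$ (from $\kappa(L)\geq 0$ together with $\kappa\leq\nu$), both of which hold with no further work. The conclusion $P_{\sigma}(\mu^{*}L)\sim_{\mathbb{Q}}P_{\sigma}(g^{*}D)$ is the same as what you would have obtained, because $P_{\sigma}(\mu^{*}P_{\sigma}(\phi^{*}L))=P_{\sigma}(\mu^{*}\phi^{*}L)$. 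Your bookkeeping for properties (1)–(3) from that point — the chain $mP_{\sigma}(\mu^{*}L)\leq m\mu^{*}L$, the section-ring isomorphisms via Nakayama's identity and the projection formula, and the numerical-dimension chain via Theorem \ref{numdimproperties}(4)(5) — is fine and matches the paper's ``the properties follow immediately.'' So the downstream computations are correct; only the choice of divisor to which the key theorem is applied needs to change.
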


\begin{proof}
Let $\phi: Y \to X$ be a resolution of the pseudo-effective reduction map $\pi: Y \to Z$.  Then $\nu(P_{\sigma}(\phi^{*}L)|_{F}) = 0$ and $\kappa(F,P_{\sigma}(\phi^{*}L)|_{F}) = 0$ (since $\kappa(L) \geq 0$).  Apply Theorem \ref{relativesigmadimension0} to $P_{\sigma}(\phi^{*}L)$ and $\pi$ to construct a morphism $g: W \to T$.  Since $mP_{\sigma}(\mu^{*}L) \leq m\mu^{*}L$, $R(X,L) \cong R(W,P_{\sigma}(\mu^{*}L))$, and $\nu(L) = \nu(P_{\sigma}(\mu^{*}L))$, the properties follow immediately.
\end{proof}

By analogy to the nef dimension, we define the pseudo-effective dimension $p(L)$ of $L$ to be the dimension of the image of the pseudo-effective reduction map for $L$.  Note that $p(L)$ is a numerical and birational invariant of $L$.

\begin{prop} \label{psefdimensioncomparison}
Let $X$ be smooth and let $L$ be a pseudo-effective divisor.  We have
\begin{equation*}
\kappa(L) \leq \nu(L) \leq p(L).
\end{equation*}
\end{prop}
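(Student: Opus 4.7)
The first inequality $\kappa(L) \leq \nu(L)$ is simply Theorem \ref{numdimproperties}(2), so all the content lies in establishing $\nu(L) \leq p(L)$. The plan is to exploit the fact that on a suitable birational model realizing the pseudo-effective reduction map the positive part $P_{\sigma}(\phi^{*}L)$ is numerically trivial along the general fiber, and then to feed this into the subadditivity property of $\nu$ under fibrations recorded as Theorem \ref{numdimproperties}(6).

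More concretely, I would first invoke Theorem \ref{psefreduction}(1) to fix a birational morphism $\phi: Y \to X$ (with $Y$ smooth, after passing to a resolution if necessary) and a surjective morphism $\pi: Y \to Z$ with connected fibers, birationally realizing the pseudo-effective reduction map, so in particular $\dim Z = p(L)$ and $P_{\sigma}(\phi^{*}L)|_{F} \equiv 0$ for a general fiber $F$ of $\pi$. In particular $\nu(P_{\sigma}(\phi^{*}L)|_{F}) = 0$ by the characterization in Theorem \ref{numdimproperties}(3).

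Next I would apply Theorem \ref{numdimproperties}(6) to the pseudo-effective divisor $P_{\sigma}(\phi^{*}L)$ and the map $\pi$, obtaining
\begin{equation*}
\nu(P_{\sigma}(\phi^{*}L)) \;\leq\; \nu(P_{\sigma}(\phi^{*}L)|_{F}) + \dim Z \;=\; 0 + p(L) \;=\; p(L).
\end{equation*}
Finally, combining this with parts (4) and (5) of Theorem \ref{numdimproperties} (birational invariance of $\nu$ and the equality $\nu(L) = \nu(P_{\sigma}(L))$) gives
\begin{equation*}
\nu(L) \;=\; \nu(\phi^{*}L) \;=\; \nu(P_{\sigma}(\phi^{*}L)) \;\leq\; p(L),
\end{equation*}
as required.

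There is no real obstacle here: every ingredient has already been set up, and the argument amounts to chaining together Theorem \ref{psefreduction}(1) with the standard properties of the numerical dimension. The only point requiring minor care is passing to a smooth model $Y$ so that $P_{\sigma}(\phi^{*}L)$ and its restriction to a general fiber are defined, which is automatic by taking a resolution of the model produced by Theorem \ref{psefreduction}.
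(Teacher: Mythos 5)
Your argument is correct and is essentially identical to the paper's: both reduce to showing $\nu(L) \leq p(L)$ by choosing a resolution $\phi: Y \to X$, $\pi: Y \to Z$ realizing the pseudo-effective reduction map, observing $\nu(L) = \nu(P_{\sigma}(\phi^{*}L))$ from parts (4) and (5) of Theorem \ref{numdimproperties}, and then applying the fibration inequality of Theorem \ref{numdimproperties}(6) together with the numerical triviality of $P_{\sigma}(\phi^{*}L)$ on a general fiber from Theorem \ref{psefreduction}(1). Your write-up is merely a more explicit spelling-out of the same chain of references the paper compresses into one line.
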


\begin{proof}
We just need to check the last inequality.  Choose $\phi: Y \to X$ and $\pi: Y \to Z$ as in the definition of the pseudo-effective reduction map.  By Proposition \ref{numdimproperties}, we have $\nu(L) = \nu(P_{\sigma}(\phi^{*}L)) \leq \nu(P_{\sigma}(\phi^{*}L|_{F})) + \dim Z = \dim Z$ where $F$ is a general fiber of $\pi$.
\end{proof}

Note that the triviality condition $\nu(P_{\sigma}(\phi^{*}L)|_{F}) = 0$ on the fibers of the pseudo-effective reduction map is weaker than the condition $\nu(\phi^{*}L|_{F}) = 0$ on the fibers of the $L$-trivial reduction map. This distinction is crucial for the study of adjoint divisors $K_{X} + \Delta$.  One wonders whether the stronger condition $\nu(\phi^{*}L|_{F}) = 0$ holds on the fibers of the pseudo-effective reduction map as well.  Although we show it for abundant divisors in Theorem \ref{abundantequivalence}, the following example shows that this condition fails in general.

\begin{exmple} \label{fiberwise}
We will construct a threefold $X$ and a divisor $L$ such that the pseudo-effective reduction map for $L$ is a morphism $\pi: X \to S$ to a surface $S$ and $N_{\sigma}(L)$ has positive intersection with the fibers of $\pi$.  In particular, $N_{\sigma}(L)|_{F}$ is ample for a general fiber $F$ of $\pi$, so that $\pi$ does not satisfy the stronger condition $\nu(L|_{F})=0$.

Let $S$ be any surface carrying a nef divisor $D$ such that $D^{2} = 0$ and $D \cdot C > 0$ for every curve $C$ on $S$.  Fix an ample divisor $A$ on $S$.  Define $X$ to be the projective bundle
\begin{equation*}
\pi: \mathbb{P}_{S}(\mathcal{O}_{S} \oplus \mathcal{O}_{S}(-A)) \to S.
\end{equation*}
We let $T$ denote the zero section and set $L = T + \pi^{*}D$.

Our next goal is to calculate $N_{\sigma}(L)$.  \cite{nakayama04} Ch.~III.3 shows that $P_{\sigma}(L)|_{E}$ is pseudo-effective for any smooth prime divisor  $E$.  Since $(L-cT)|_{T} \equiv D+(c-1)A$ is not pseudo-effective for $c < 1$, we must have $N_{\sigma}(L) \geq T$.  But $L-T=\pi^{*}D$ is nef, so in fact $N_{\sigma}(L) = T$ and $P_{\sigma}(L) = \pi^{*}D$.  By construction $\pi^{*}D \cdot C > 0$ for any curve $C$ not contracted by $\pi$.  Thus $\pi$ is the pseudo-effective reduction map for $L$.  However, $T$ has positive intersection with the fibers of $\pi$.
\end{exmple}

One important feature of the nef reduction map is that it is almost proper, i.e.~the general fiber of $\pi: X \dashrightarrow Z$ is proper.  This is no longer true for the pseudo-effective reduction map as demonstrated by Example \ref{bdppexample}.  However, one might hope that $\mathbf{B}_{-}(L)$ represents the ``obstruction'' to the almost properness of the pseudo-effective reduction map.

\begin{ques}
Let $X$ be a smooth variety and let $L$ be a pseudo-effective divisor on $X$.  Is there a birational model $\phi: Y \to X$ centered in $\mathbf{B}_{-}(L)$ and an almost proper map $f: Y \dashrightarrow Z$ such that $f$ is birationally equivalent to the pseudo-effective reduction map?
\end{ques}

\section{Abundant Divisors} \label{abundancesection}

Recall that a divisor $L$ is said to be abundant if $\kappa(L) = \nu(L)$.  It turns out that this condition has strong geometric consequences.  As a general principle, one expects that the asymptotic behavior of sections of abundant divisors should be described by numerical properties.  This principle has been established for nef divisors in the papers \cite{kawamata85} and \cite{russo07}.  We will study the general case using the pseudo-effective reduction map.

The most important result in this section is the following list of equivalent conditions for abundance.  Partial results in this direction were known before: the most important equivalence (1) $\Leftrightarrow$ (3) is demonstrated in \cite{eckl05} and the equivalence (2) $\Leftrightarrow$ (4) was shown in \cite{nakayama04}.  The case of adjoint divisors $K_{X} + \Delta$ for a klt pair $(X,\Delta)$ was also considered by Nakayama.

\begin{thrm} \label{abundantequivalence}
Let $X$ be a normal variety and $L$ a divisor with $\kappa(L) \geq 0$.  The following are equivalent:
\begin{enumerate}
\item $\kappa(L) = \nu(L)$.
\item Let $\mu: X' \to X$ be a birational map and $f: X' \to Z'$ a morphism resolving the Iitaka fibration for $L$.  Then
\begin{equation*}
\nu(\mu^{*}L|_{F}) = 0
\end{equation*}
for a general fiber $F$ of $f$.
\item $\kappa(L) = p(L)$.
\item There is a smooth variety $W$ admitting a birational map $\mu: W \to X$ and a morphism $g: W \to T$ with connected fibers such that $P_{\sigma}(\mu^{*}L) \sim_{\mathbb{Q}} P_{\sigma}(g^{*}B)$ for some big divisor $B$ on $T$.
\end{enumerate}
\end{thrm}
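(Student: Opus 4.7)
The plan is to prove $(1) \Leftrightarrow (3) \Leftrightarrow (4)$ via a short cycle, then incorporate $(2)$ using the easy implication $(2) \Rightarrow (1)$ together with the more delicate $(4) \Rightarrow (2)$. For $(1) \Leftrightarrow (3)$, sandwich $\kappa(L) \leq \nu(L) \leq p(L) \leq \dim Z' = \kappa(L)$: the first two inequalities are Proposition~\ref{psefdimensioncomparison}, and the third uses that the Iitaka fibration factors birationally through the pseudo-effective reduction map (Proposition~\ref{psefreductionproperties}(1)). Whenever either condition holds, all three invariants coincide and the Iitaka fibration agrees birationally with the pseudo-effective reduction map.

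For $(3) \Rightarrow (4)$, pick a smooth model $\phi: Y \to X$ and morphism $\pi: Y \to Z$ resolving both maps. Theorem~\ref{weakpsefreduction} gives $\nu(P_{\sigma}(\phi^{*}L)|_{F}) = 0$ on general fibers, and one verifies $\kappa(P_{\sigma}(\phi^{*}L)|_{F}) = 0$ by bounding it above by $\kappa(\phi^{*}L|_{F}) = 0$ (using $P_{\sigma}(\phi^{*}L) \leq \phi^{*}L$) and below by restricting to a general $F$ some $\mathbb{Q}$-effective representative of $P_{\sigma}(\phi^{*}L)$ (which exists since $\kappa(P_{\sigma}(\phi^{*}L)) = \kappa(L) \geq 0$). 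Applying Theorem~\ref{relativesigmadimension0} to $P_{\sigma}(\phi^{*}L)$ and $\pi$ produces $g: W \to T$ birationally equivalent to $\pi$ and a divisor $B$ on $T$ with $P_{\sigma}(\widetilde{\mu}^{*}L) \sim_{\mathbb{Q}} P_{\sigma}(g^{*}B)$ after identifying positive parts across the birational pullback. Bigness of $B$ follows from $\kappa(B) = \kappa(P_{\sigma}(g^{*}B)) = \kappa(L) = \dim T$. The converse $(4) \Rightarrow (1)$ is direct: $\kappa(L) = \kappa(B) = \dim T$ while $\nu(L) = \nu(g^{*}B) \leq \dim T$ by Theorem~\ref{numdimproperties}(6) applied to $g$ (whose general fibers satisfy $g^{*}B|_{F} = 0$), forcing equality. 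Likewise $(2) \Rightarrow (1)$ is immediate from $\nu(L) \leq \nu(\mu^{*}L|_{F}) + \dim Z' = \kappa(L)$.

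The heart of the proof is $(4) \Rightarrow (2)$. On a general fiber $F$ of $g$ (birationally equivalent to a fiber of the Iitaka fibration), $g^{*}B|_{F} = 0$, so the decomposition $g^{*}B = P_{\sigma}(g^{*}B) + N_{\sigma}(g^{*}B)$ restricts to show that $P_{\sigma}(g^{*}B)|_{F} = -N_{\sigma}(g^{*}B)|_{F}$ is simultaneously pseudo-effective (Theorem~\ref{relativebdpp}) and anti-effective, hence numerically trivial. Via $P_{\sigma}(\mu^{*}L) \sim_{\mathbb{Q}} P_{\sigma}(g^{*}B)$, this forces $P_{\sigma}(\mu^{*}L)|_{F} \equiv 0$, so $\mu^{*}L|_{F} \equiv N_{\sigma}(\mu^{*}L)|_{F}$ is numerically equivalent to an effective divisor. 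The main obstacle is then deducing $\nu(\mu^{*}L|_{F}) = 0$ from this plus $\kappa(\mu^{*}L|_{F}) = 0$, since $\kappa = 0$ and effectiveness do not in general force $\nu = 0$ (witness a nef section on a ruled surface over an elliptic curve with a non-torsion degree-zero line bundle, where $\kappa = 0 < 1 = \nu$). One expects to exploit the uniqueness of the effective representative on $F$ forced by $\kappa(\mu^{*}L|_{F}) = 0$, together with the rigidity of $N_{\sigma}(\mu^{*}L)$ on $W$ (which satisfies $P_{\sigma}(N_{\sigma}(\mu^{*}L)) \equiv 0$), to argue that $N_{\sigma}$ commutes with restriction to a general fiber in this particular setting, hence $P_{\sigma}(\mu^{*}L|_{F}) \equiv 0$ and $\nu(\mu^{*}L|_{F}) = 0$.
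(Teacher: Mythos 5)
Your cycle $(1)\Leftrightarrow(3)\Leftrightarrow(4)$ with a side excursion to $(2)$ has two genuine gaps, one of which is fatal.

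First, the sandwich $\kappa(L)\leq \nu(L)\leq p(L)\leq \dim Z' = \kappa(L)$ that you propose for $(1)\Leftrightarrow(3)$ is wrong in the last inequality. With the paper's convention, ``the Iitaka fibration factors birationally through the pseudo-effective reduction map'' (Proposition~\ref{psefreductionproperties}(1)) means there is a dominant rational map from the base of the pseudo-effective reduction to the base of the Iitaka fibration; this gives $p(L)\geq \kappa(L)$, not $p(L)\leq\kappa(L)$. You have reversed the direction. Indeed, if your chain held it would say $\kappa(L)=\nu(L)=p(L)$ unconditionally for every pseudo-effective divisor with $\kappa(L)\geq 0$, which is absurd. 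The inequality $p(L)\leq \kappa(L)$ is exactly what needs to be \emph{proved} under the hypothesis $\kappa=\nu$; it is not formal, and the paper establishes it by first proving $(1)\Rightarrow(2)$.

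Second, the implication you identify as ``the heart,'' $(4)\Rightarrow(2)$, is left unfinished. You correctly isolate the obstacle: from $P_{\sigma}(\mu^{*}L)|_{F}\equiv 0$ one cannot directly conclude $P_{\sigma}(\mu^{*}L|_{F})\equiv 0$, because $N_{\sigma}$ does not commute with restriction to a general fiber. You then say ``one expects to exploit the uniqueness of the effective representative\dots together with the rigidity of $N_{\sigma}$\dots to argue that $N_{\sigma}$ commutes with restriction in this particular setting.'' That is precisely the content one has to prove, and the proposal does not prove it. The paper's route is different and avoids any commutation statement. It proves $(1)\Rightarrow(2)$ directly from the definition of $\nu$ via numerical domination: choose a minimal-dimensional $W$ with $L\not\succcurlyeq W$, show $W$ dominates $Z$, extract $W$ as a divisor $E$ on a blowup, use Remark~\ref{numdimalternateformulation} and a compactness argument in a slice of $\overline{NM}_{1}$ to produce a movable class $\alpha$ with $\phi^{*}L\cdot\alpha=0$ and $E\cdot\alpha>0$, observe that $\alpha\in\overline{NM}_{1}(Y/Z)$, and then use the surjection $i_{*}:\overline{NM}_{1}(G)\to\overline{NM}_{1}(Y/Z)$ from Theorem~\ref{relativebdpp} to localize the non-domination to a general fiber $G$, giving $L|_{F}\not\succcurlyeq W\cap F$ with $W\cap F$ zero-dimensional and hence $\nu(L|_{F})=0$. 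Your $(2)\Rightarrow(1)$, $(3)\Rightarrow(4)$, and $(4)\Rightarrow(1)$ steps are essentially the same as the paper's (and fine), but without a valid argument for one of $(1)\Rightarrow(2)$, $(1)\Rightarrow(3)$, or $(4)\Rightarrow(2)$, the cycle does not close.
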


It is worth emphasizing that condition (2) implies the strong condition $\nu(\phi^{*}L|_{F})=0$ on a general fiber of the pseudo-effective reduction map.

\begin{proof}
$ $

(1) $\Rightarrow$ (2): For ease of notation, note that by blowing up we may assume that the Iitaka fibration for $L$ is actually a morphism $f: X \to Z$ of smooth varieties.  We would like to show that $\nu(L|_{F}) = 0$ for a general fiber $F$ of $f$.

Recall that the numerical dimension $\nu(L)$ is defined to be the minimal dimension of a (general) reduced closed subset $W$ such that $L \not \succcurlyeq W$.  Choose a subset $W$ achieving this minimal dimension.  Note that $W$ dominates $Z$ under $f$.  If it did not, there would be some very ample divisor $H$ on $Z$ such that $f^{*}H$ contains $W$.  But since $f^{*}H \leq mL$ for some $m$, we would have $L \succcurlyeq f^{*}H \succcurlyeq W$, a contradiction.  In particular, $W \cap F$ is $0$-dimensional for a general fiber $F$ of $f$.

Let $\phi: Y \to X$ be a birational map extracting $W$ as a divisor so that $\phi^{-1}\mathcal{I}_{W} \cdot \mathcal{O}_{Y} = \mathcal{O}_{Y}(-E)$ for some divisor $E$.  Let $g$ denote the composition $f \circ \phi: Y \to Z$.  Since $W$ dominates $Z$ under $f$, the restriction of $E$ to a general fiber of $g$ has codimension $1$.

By Remark \ref{numdimalternateformulation},  $L \not \succcurlyeq W$ implies that there is an ample divisor $A$ such that $x\phi^{*}L - E + A$ is not pseudo-effective for any $x > 0$.  Now, choose any compact slice of $\overline{NM}_{1}(Y)$, and let $Q$ denote the closed region of this compact slice on which $-E + A$ is non-positive.  If $\phi^{*}L$ is positive on $Q$, then (by compactness) $x\phi^{*}L - E + A$ is positive on all of $\overline{NM}_{1}(Y)$ for $x$ sufficiently large, a contradiction.

Thus, there must be at least one curve class $\alpha \in \overline{NM}_{1}(X)$ with $E \cdot \alpha > 0$ and $\phi^{*}L \cdot \alpha = 0$.  This latter condition implies that $g^{*}H \cdot \alpha = 0$ for an ample divisor $H$ on $Z$.  Let $G$ be a general fiber of $g$.  Theorem \ref{relativebdpp} shows that there is a surjection $i_{*}: \overline{NM}_{1}(G) \to \overline{NM}_{1}(Y/Z)$.  Thus, there is some nef curve class $\beta$ on $G$ with $\phi^{*}L|_{G} \cdot \beta = 0$ and $E|_{G} \cdot \beta > 0$.  So $\phi^{*}L|_{G} \not \succcurlyeq E|_{G}$.  In other words, if $F = \phi(G)$ denotes the corresponding fiber of $f$, we have $L|_{F} \not \succcurlyeq W \cap F$.  Since $W \cap F$ has dimension $0$ this yields $\nu(L|_{F}) = 0$, finishing the implication.

\bigskip

(2) $\Rightarrow$ (3): It follows from (2) that $p(L) \leq \kappa(L)$, and the reverse inequality is shown in Proposition \ref{psefdimensioncomparison}.

\bigskip

(3) $\Rightarrow$ (4): Apply Theorem \ref{relativesigmadimension0} to the pseudo-effective reduction map to find a morphism $g: W \to T$ birationally equivalent to the Iitaka fibration such that $P_{\sigma}(\mu^{*}L) \sim_{\mathbb{Q}} P_{\sigma}(g^{*}D)$ for some divisor $D$ on $T$.  $D$ must be big for $L$ to have the correct Iitaka dimension.

\bigskip

(4) $\Rightarrow$ (1): It is clear that $P_{\sigma}(\mu^{*}L)$ is abundant.  By Theorem \ref{restnumdimproperties} this is equivalent to the abundance of $L$.
\end{proof}

\begin{cor} \label{abundantrelativelytrivial}
Let $X$ be a smooth variety, $L$ an abundant divisor on $X$, and $f: X \to Z$ a morphism birationally equivalent to the Iitaka fibration for $L$.  For any effective divisor $D \sim_{\mathbb{Q}} L$ the $f$-horizontal part $D_{hor}$ of $D$ satisfies $D_{hor} \leq N_{\sigma}(L)$.
\end{cor}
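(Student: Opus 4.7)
The plan is to reduce to a fiberwise equality using the characterization of abundance.  Since $L$ is abundant and $f : X \to Z$ resolves the Iitaka fibration, the implication $(1) \Rightarrow (2)$ of Theorem~\ref{abundantequivalence} gives $\nu(L|_F) = 0$ for a general fiber $F$.  By Theorem~\ref{numdimproperties} this means $P_\sigma(L|_F) \equiv 0$, so $N_\sigma(L|_F) \equiv L|_F$ as classes on the smooth projective variety $F$.

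Next I would show $D|_F = N_\sigma(L|_F)$ for general $F$.  The restriction $D|_F$ is effective and $\sim_\mathbb{Q} L|_F$.  The minimality of $N_\sigma$, namely that $\sigma_\Gamma(L|_F) \leq \mult_\Gamma(D')$ for any effective $\mathbb{Q}$-divisor $D' \sim_\mathbb{Q} L|_F$ (which follows immediately from the definition of $\sigma_\Gamma$ as a limit $\lim_{\epsilon \to 0} v_\Gamma(\|L|_F + \epsilon A\|)$ applied to representatives $D' + \epsilon A' \sim_\mathbb{R} L|_F + \epsilon A$ with $A'$ chosen to avoid $\Gamma$), yields $N_\sigma(L|_F) \leq D|_F$.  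Then $D|_F - N_\sigma(L|_F)$ is an effective divisor numerically equivalent to zero on the smooth projective variety $F$, hence zero, giving $D|_F = N_\sigma(L|_F)$.

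To conclude, I would translate this fiberwise equality into a global multiplicity inequality.  Let $\Gamma \subset X$ be a horizontal prime divisor and $\Gamma'$ any component of $\Gamma \cap F$ for a sufficiently general fiber $F$.  Then
\begin{equation*}
\mult_\Gamma(D) = \mult_{\Gamma'}(D|_F) = \mult_{\Gamma'}(N_\sigma(L|_F)) = \sigma_{\Gamma'}(L|_F).
\end{equation*}
The restriction inequality $\sigma_{\Gamma'}(L|_F) \leq \sigma_\Gamma(L)$ follows by restricting an effective representative of $m(L + \epsilon A)$ (for any ample $A$) to $F$, using the equality $\mult_{\Gamma'}(E|_F) = \mult_\Gamma(E)$ that holds when $F$ meets $\Gamma$ transversally and $F \not\subset \Supp(E)$, and letting $\epsilon \to 0$.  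Combining, $\mult_\Gamma(D_{hor}) = \mult_\Gamma(D) \leq \sigma_\Gamma(L) = \mult_\Gamma(N_\sigma(L))$ for every horizontal $\Gamma$, and since $D_{hor}$ has no vertical components this gives $D_{hor} \leq N_\sigma(L)$.

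The main subtlety is the fiberwise equality $D|_F = N_\sigma(L|_F)$, where one must use both the psef minimality property of $N_\sigma$ and the fact that $\nu(L|_F) = 0$ (to ensure $N_\sigma(L|_F) \equiv L|_F$ so the effective difference vanishes).  A secondary point is that $F$ must be chosen very generally: not only to miss the vertical components of $D$ and the finitely many prime divisors of $N_\sigma(L+\epsilon_i A)$ for a sequence $\epsilon_i \downarrow 0$, but also so $F$ avoids the stable base locus of the big class $L + \epsilon_i A$ for each such $\epsilon_i$, so the restriction inequality for $\sigma$ applies; a very general $F$ accommodates all of these countably many constraints.
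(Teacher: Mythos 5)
Your proposal is correct, but it takes a genuinely different and more hands-on route than the paper. The paper works entirely at the level of the global positive part: since $D \sim_{\mathbb{Q}} L$ one has $P_{\sigma}(D) \sim_{\mathbb{Q}} P_{\sigma}(L)$, and for abundant $L$ the Iitaka fibration is birationally the pseudo-effective reduction map, so $P_{\sigma}(D)|_{F} \equiv 0$ on a general fiber; being effective it is then zero, which says precisely that $P_{\sigma}(D)$ has no $f$-horizontal component, i.e.\ $D_{hor} \leq N_{\sigma}(D) = N_{\sigma}(L)$. You instead compute the \emph{fiberwise} $\sigma$-decomposition: from $\nu(L|_{F}) = 0$ you deduce $P_{\sigma}(L|_{F}) \equiv 0$, combine the minimality of $N_{\sigma}$ on $F$ with effectivity to get the equality $D|_{F} = N_{\sigma}(L|_{F})$, and then lift this to a bound on $\mult_{\Gamma}(D)$ via the restriction inequality $\sigma_{\Gamma'}(L|_{F}) \leq \sigma_{\Gamma}(L)$ for very general $F$.

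The trade-off is roughly this: the paper's argument is shorter but leans on the equivalence (for abundant $L$) between the Iitaka fibration and the pseudo-effective reduction map, so that Theorem~\ref{psefreduction} (1) applies. Your argument bypasses that machinery and instead establishes and uses the restriction inequality for $\sigma_{\Gamma}$ directly. You are right that the key subtlety there is choosing $F$ very generally: you must dodge countably many loci (the supports of a minimizing sequence of sections of $m(L+\epsilon_{i}A)$, the finitely many components of each $N_{\sigma}(L+\epsilon_{i}A)$, and $\mathbf{B}_{-}(L)$), and also ensure $F$ is transverse to the finitely many components of $D$ so that $\mult_{\Gamma}(D) = \mult_{\Gamma'}(D|_{F})$. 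Your proof also yields the slightly finer statement $D|_{F} = N_{\sigma}(L|_{F})$ for very general $F$, which the paper does not record explicitly. Both routes are valid; the paper's is more streamlined given its surrounding machinery, yours is more self-contained and makes the mechanism at the level of valuations visible.
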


\begin{proof}
Since $\nu(D|_{F}) = \nu(L|_{F}) = 0$ for a general fiber $F$ of $f$, we know that $P_{\sigma}(D)|_{F} \equiv 0$.  But $P_{\sigma}(D)$ is effective, so that in fact $P_{\sigma}(D)|_{F} = 0$.  Thus $D_{hor} \leq N_{\sigma}(D) = N_{\sigma}(L)$.
\end{proof}

Condition (4) of Theorem \ref{abundantequivalence} implies that abundant divisors satisfy some of the same properties as big divisors.  We will collect some of these here.

\subsection{Continuity Results}

\begin{cor} \label{abundancenumerical}
Let $X$ be a normal variety and $L$ be an abundant divisor.  Suppose that $D$ is $\mathbb{Q}$-linearly equivalent
to an effective divisor and satisfies $D \equiv L$.  Then $D$ is also abundant.
\end{cor}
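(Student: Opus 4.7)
The plan is to verify condition (4) of Theorem \ref{abundantequivalence} for $D$, which then gives abundance by (4) $\Rightarrow$ (1). Since $\nu$ is a numerical invariant, $\nu(D) = \nu(L) = \kappa(L)$ by abundance of $L$, and $\kappa(D) \leq \nu(D)$ always, so the corollary is really a lower bound on $\kappa(D)$. After replacing $D$ with a $\mathbb{Q}$-linearly equivalent effective divisor I may assume $D \geq 0$.

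To set up the required structure, I first apply (1) $\Rightarrow$ (4) of Theorem \ref{abundantequivalence} to $L$ to produce a birational map $\mu : W \to X$, a morphism $g : W \to T$ with connected fibers birationally equivalent to the Iitaka fibration of $L$, and a big divisor $B$ on $T$ with $P_\sigma(\mu^*L) \sim_\mathbb{Q} P_\sigma(g^*B)$. In particular $\dim T = \kappa(L) = \nu(L)$, and (1) $\Rightarrow$ (2) of Theorem \ref{abundantequivalence} guarantees $\nu(\mu^*L|_F) = 0$ on a general fiber $F$ of $g$. Since $D \equiv L$ implies $\mu^*D|_F \equiv \mu^*L|_F$, numerical invariance of $\nu$ gives $\nu(\mu^*D|_F) = 0$, and combined with $\mu^*D|_F \geq 0$ this forces $\kappa(\mu^*D|_F) = 0$. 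I then apply Nakayama's Theorem \ref{relativesigmadimension0} to $g$ and $\mu^*D$, obtaining a morphism $g' : W' \to T'$ birationally equivalent to $g$ (via $\mu' : W' \to W$) together with a $\mathbb{Q}$-divisor $D''$ on $T'$ satisfying $P_\sigma((\mu \circ \mu')^*D) \sim_\mathbb{Q} P_\sigma(g'^*D'')$.

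The remaining step, which I expect to be the main obstacle, is to show that $D''$ is big. Using Theorem \ref{numdimproperties} and the $\mathbb{Q}$-linear equivalence above, a direct chain of equalities gives
\[
\nu(g'^*D'') = \nu(P_\sigma(g'^*D'')) = \nu(P_\sigma((\mu \circ \mu')^*D)) = \nu(D) = \nu(L) = \dim T',
\]
so the pullback $g'^*D''$ already achieves the maximal numerical dimension compatible with being vertical over $T'$. Bigness of $D''$ on $T'$ then follows because $\nu$ is preserved under pullback along a surjective morphism with connected fibers: one direction is Theorem \ref{numdimproperties}(6) applied to a general fiber on which $g'^*D''$ restricts to zero, and the other uses injectivity of $g'^*$ on numerical classes together with a comparison of top intersections. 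Once $D''$ is known to be big, condition (4) of Theorem \ref{abundantequivalence} holds for $D$, and (4) $\Rightarrow$ (1) yields the abundance of $D$.
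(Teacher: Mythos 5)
Your overall strategy is sound and parallels the paper's: both proofs invoke Theorem \ref{relativesigmadimension0} to descend $D$ to a divisor on the base of a fiber space birationally equivalent to the Iitaka fibration of $L$, and then the whole problem reduces to showing that the descended divisor is big. The two proofs diverge precisely at that last step, and that is where your argument has a real gap. The paper compares the two descended divisors directly: since $P_\sigma(\mu^*L)\equiv P_\sigma(\mu^*D)$, one gets $P_\sigma(g^*B)\equiv P_\sigma(g^*B')$ on a common model, and then Nakayama III.5.15 (a statement about how $\sigma$ and $P_\sigma$ descend along fiber spaces) gives $P_\sigma(B)\equiv P_\sigma(B')$, so $B'$ is big because $B$ is. You instead compute $\nu(g'^*D'')=\dim T'$ and try to conclude that $D''$ itself has full numerical dimension. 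That conclusion is correct, but it rests on the claim that $\nu$ is preserved under pullback by a fiber space, which is \emph{not} among the properties listed in Theorem \ref{numdimproperties}.

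Theorem \ref{numdimproperties}(6) only gives the one inequality $\nu(g'^*D'')\le\nu(g'^*D''|_F)+\dim T'=\dim T'$, which you already knew; what you actually need is the reverse direction $\nu(D'')\ge\nu(g'^*D'')$, and your justification of it (``injectivity of $g'^*$ on numerical classes together with a comparison of top intersections'') does not constitute a proof. Injectivity of $g'^*$ on $N^1$ controls numerical classes but says nothing a priori about the asymptotic/domination quantities entering $\nu$, and ``top intersections'' of $P_\sigma$ have no good intersection-theoretic behavior in the divisorial Zariski decomposition framework. The fact you want is true -- it is essentially Nakayama, Proposition V.2.7(3) -- and if you cite that, the argument closes. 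But as written, the final step is an assertion rather than a deduction from the results available in the paper. So: same skeleton as the paper up through the application of Theorem \ref{relativesigmadimension0}, a genuinely different route for bigness, and a gap at the point where the route diverges; either cite Nakayama V.2.7, or switch to the paper's route via III.5.15.
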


\begin{proof}
It suffices to show that $\kappa(D) = \kappa(L)$.  Recall that there is a birational map $\mu: W \to X$
and a morphism $g: W \to T$ with connected fibers such that $P_{\sigma}(\mu^{*}L) \sim_{\mathbb{Q}} P_{\sigma}(g^{*}B)$ for
some big divisor $B$ on $T$.  Thus  $P_{\sigma}(\mu^{*}D)|_{G} \equiv 0$ for a general fiber $G$ of $g$.  Since $D$ is $\mathbb{Q}$-linearly equivalent to an effective divisor, $P_{\sigma}(\mu^{*}D)|_{G} \sim_{\mathbb{Q}} 0$.  Thus $\kappa(G,P_{\sigma}(\mu^{*}D)|_{G}) = 0$.  Theorem \ref{relativesigmadimension0} implies that (after possibly passing to a higher birational model) $P_{\sigma}(\mu^{*}D) \sim_{\mathbb{Q}} P_{\sigma}(g^{*}B')$ for some divisor $B'$ on $T$.  By \cite{nakayama04} III.5.15 we must have $P_{\sigma}(B) \equiv P_{\sigma}(B')$.  This means that  $B'$ is a big divisor, showing the abundance of $D$.
\end{proof}

We next study the behavior of asymptotic valuations of abundant divisors.  When a valuation measures the order of vanishing along a divisor $T$ on some birational model $\phi: Y \to X$, we say that $v$ is a divisorial valuation and denote it by $v_{T}$.

\begin{prop} \label{limitofvaluations}
Let $X$ be a normal variety and let $L$ be an abundant divisor.  Suppose that $v = v_{T}$ is a divisorial valuation on $X$.  For any ample divisor $A$ on $X$
\begin{equation*}
v(\Vert L \Vert) = \lim_{m\to \infty} v\left( \left\Vert L + \frac{1}{m} A \right\Vert \right).
\end{equation*}
\end{prop}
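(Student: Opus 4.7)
The plan is to reduce the equality to the known continuity of asymptotic valuations on the big cone (Theorem~\ref{valuationnumericalinvariant}) via the structural description of abundant divisors in Theorem~\ref{abundantequivalence}. The inequality $\sigma_v(L) \leq v(\|L\|)$ is immediate: choose $A$ so that $|A|$ contains members whose support avoids the center of $v$. Then for any $D \in |mL|$ and such a general effective representative $A'$, the divisor $D + m\epsilon A'$ lies in the $\mathbb{Q}$-linear system of $m(L+\epsilon A)$ and satisfies $v(D + m\epsilon A') = v(D)$. Hence $v(\|L+\epsilon A\|) \leq v(\|L\|)$, and letting $\epsilon \downarrow 0$ gives the claim.

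For the reverse inequality, reduce first to the case where $X$ is smooth and $v = v_T$ for an honest prime divisor $T$ on $X$; both sides are invariant under birational pullback (sections correspond by normality, and $\sigma_v$ is extended by continuity from the big cone where it agrees with $v(\|-\|)$). The section isomorphism $H^0(X, \lfloor mP_\sigma(L)\rfloor) \xrightarrow{\sim} H^0(X, \lfloor mL\rfloor)$ from the divisorial Zariski decomposition yields the splitting
\begin{equation*}
v(\|L\|) = v(\|P_\sigma(L)\|) + v(N_\sigma(L)).
\end{equation*}
Since $N_\sigma(L) = \sum_\Gamma \sigma_\Gamma(L)\Gamma$, the second summand equals $\sigma_T(L) = \sigma_v(L)$, so the proposition reduces to proving $v(\|P_\sigma(L)\|) = 0$.

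To establish the vanishing, invoke Theorem~\ref{abundantequivalence}(4) to produce a birational morphism $\mu: W \to X$ and a morphism $g: W \to T_0$ with $P_\sigma(\mu^*L) \sim_{\mathbb{Q}} P_\sigma(g^*B)$ for some big divisor $B$ on $T_0$. After possibly further blowing up $W$ so that $v$ still corresponds to a prime divisor, we have $v(\|P_\sigma(L)\|) = v(\|P_\sigma(g^*B)\|)$. By the $P_\sigma$ isomorphism and the projection formula $g_*\mathcal{O}_W = \mathcal{O}_{T_0}$, every section of $mP_\sigma(g^*B)$ is obtained by pulling back a section $s_0 \in H^0(T_0, mB)$ via $g$ and dividing by $\lfloor mN_\sigma(g^*B)\rfloor$. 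Split into cases according to the center of $v$ on $W$: if it is $g$-horizontal or $g$-vertical with image of codimension at least two in $T_0$, then a general $g^*s_0$ does not vanish along it and the asymptotic $v$-value is already zero; if it is $g$-vertical mapping with ramification $e_T$ to a prime divisor $T_0' \subset T_0$, then $v(g^*s_0) = e_T \cdot v_{T_0'}(s_0)$, and Theorem~\ref{valuationnumericalinvariant} applied to the big divisor $B$ gives $v_{T_0'}(\|B\|) = \sigma_{v_{T_0'}}(B)$. In this last case one verifies $\sigma_T(g^*B) = e_T \sigma_{v_{T_0'}}(B)$, so that subtracting $\lfloor mN_\sigma(g^*B)\rfloor$ again produces sections whose $v$-values tend to zero.

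The main obstacle will be the identification $\sigma_T(g^*B) = e_T \sigma_{v_{T_0'}}(B)$ in the $g$-vertical codimension-one case, and more generally tracking how the divisorial components of $N_\sigma(g^*B)$ on $W$ relate to those of $N_\sigma(B)$ on $T_0$ under pullback. This bookkeeping is exactly where Theorem~\ref{valuationnumericalinvariant} enters, and hence where the abundance hypothesis (as opposed to mere pseudo-effectivity) is essential to the argument.
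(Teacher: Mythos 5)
Your proof follows essentially the same route as the paper's: reduce to the case where $T$ is a prime divisor on a smooth model, peel off $N_{\sigma}(L)$ via the splitting $v_T(\Vert L\Vert)=v_T(\Vert P_{\sigma}(L)\Vert)+v_T(N_{\sigma}(L))$ (Nakayama III.1.8), replace $P_{\sigma}(L)$ by $P_{\sigma}(g^{*}B)$ with $B$ big using Theorem~\ref{abundantequivalence}(4), and reduce to the big-cone continuity statement (Theorem~\ref{valuationnumericalinvariant}) on the base $T_{0}$. The identity you flag as "the main obstacle," $\sigma_{T}(g^{*}B)=(\mathrm{mult}_{T}g^{*}T_{0}')\,\sigma_{T_{0}'}(B)$, is exactly Nakayama~III.5.15, which the paper cites rather than reproves, so you are at the same level of rigor there; and you correctly locate where abundance enters, namely through the equality $v_{T_{0}'}(\Vert B\Vert)=\sigma_{T_{0}'}(B)$, which is only available because $B$ is big.

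The one place where your argument as written has a gap is the middle case of your trichotomy: you assert that if the center of $v$ is $g$-vertical with image of codimension $\geq 2$ in $T_{0}$, then "a general $g^{*}s_{0}$ does not vanish along it and the asymptotic $v$-value is already zero." This is false without further work: if $g(T)\subset\mathbf{B}(B)$ (which is possible even for big $B$), then \emph{every} $D\in|mB|$ contains $g(T)$, so $T\subset\mathrm{Supp}(g^{*}D)$ and $v_{T}(g^{*}D)>0$ for all such $D$, and one would need to show the compensating coefficient in $N_{\sigma}(g^{*}B)$ cancels this. The paper avoids this case entirely: it first flattens $g$ using Theorem~\ref{hironakaflattening} (so that $g$ has equidimensional fibers, forcing any prime divisor on $W$ to be either $g$-horizontal or to map onto a divisor of $T_{0}$), and only then applies III.5.15. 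You should do the same flattening step before the case split; once you do, your codimension-$\geq 2$ case is vacuous and the remaining two cases go through as you describe.
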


Using Theorem \ref{abundantequivalence}, we reduce this proposition to the case where $L$ is the pullback of a big divisor.  This case is already proved in \cite{nakayama04} III.5.15.

\begin{proof}
Suppose that $\phi: Y \to X$ is a smooth birational model.  Choose an ample divisor $A_{Y}$ such that $A_{Y} - \phi^{*}A$ is ample.  Then
\begin{equation*}
\lim_{m\to \infty} v_{T}\left( \left\Vert L + \frac{1}{m} A \right\Vert \right) \geq \lim_{m\to \infty} v_{T}\left( \left\Vert \mu^{*}L + \frac{1}{m} A_{Y} \right\Vert \right).
\end{equation*}
Therefore it suffices to prove the proposition after passing to any higher model.  In particular we may assume from now on that $T$ is a divisor on $X$.

By Theorem \ref{abundantequivalence}, by passing to a higher model we may assume there is a morphism $f: X \to Z$ of smooth varieties with connected fibers such that $P_{\sigma}(\mu^{*}L) \sim_{\mathbb{Q}} P_{\sigma}(g^{*}B)$ for a big divisor $B$ on $Z$.  By flattening and resolving we may assume that $f(T)$ is a divisor $T'$ on $Z$.  \cite{nakayama04} III.1.8 states that for any pseudo-effective divisor $L$ we have
\begin{equation*}
v_{T}(\Vert L \Vert) = v_{T}(\Vert P_{\sigma}(L) \Vert) + v_{T}(N_{\sigma}(L)).
\end{equation*}
Since $\lim_{m \to \infty} v(N_{\sigma}(L+\frac{1}{m}A)) = v(N_{\sigma}(L))$, we may subtract $N_{\sigma}(L)$ from both sides of the desired equality and add on $N_{\sigma}(f^{*}B)$ to reduce to the case where $L \sim_{\mathbb{Q}} f^{*}B$.  Let $T'$ denote $f(T)$.  \cite{nakayama04} III.5.15 checks that
\begin{equation*}
\sigma_{T}(\Vert f^{*}B \Vert) = (\mult_{T}f^{*}T') \sigma_{T'}(\Vert B \Vert).
\end{equation*}
Since $B$ is big, $\sigma_{T'}(\Vert B \Vert) = v_{T'}(\Vert B \Vert)$.  We finish the proof by noting that
\begin{equation*}
v_{T}(\Vert f^{*}B \Vert) = (\mult_{T}f^{*}T') v_{T'}(\Vert B \Vert).
\end{equation*}
\end{proof}

\subsection{Positivity Results}

Abundant divisors provide a natural setting for vanishing theorems.  In general we expect that abundant divisors satisfy vanishing theorems similar in form to those for big divisors.  In this section our goal is to give an example of this principle.

\begin{prop} \label{abundantinjectivity}
Suppose that $X$ is smooth, $L$ is an abundant $\mathbb{Z}$-divisor, and $X$ admits a morphism $f: X \to Z$ that is (birationally equivalent to) the Iitaka fibration of $L$.  Suppose that $D$ is an effective $\mathbb{Z}$-divisor such that $L - \epsilon D$ is pseudo-effective for some $\epsilon > 0$ and $\Supp(D) \not \subset \mathbf{B}_{-}(L)$.  Then the map
\begin{equation*}
H^{i}(X,\mathcal{O}_{X}(K_{X} + L) \otimes \mathcal{J}(\Vert L \Vert)) \to H^{i}(X,\mathcal{O}_{X}(K_{X} + L + D) \otimes \mathcal{J}(\Vert L \Vert))
\end{equation*}
determined by $D$ is injective for every $i>0$.
\end{prop}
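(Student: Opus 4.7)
The plan is to reduce the injectivity statement on $X$ to a Nadel-vanishing or Kollár--Ambro--Fujino injectivity statement for a big divisor on the base of the Iitaka fibration, exploiting Theorem \ref{abundantequivalence}(4). First I would apply Theorem \ref{abundantequivalence}(4) to produce a birational map $\mu: W \to X$ and a morphism $g: W \to T$ with connected fibers, birationally equivalent to $f$, such that $P_{\sigma}(\mu^{*}L) \sim_{\mathbb{Q}} P_{\sigma}(g^{*}B)$ for some big divisor $B$ on $T$. By taking further blow-ups I would arrange that $\mu^{*}L$, $\mu^{*}D$, $g^{*}B$, every $\mu$-exceptional divisor, and $\mathbf{B}_{-}(\mu^{*}L)$ all have simple normal crossings support and are in general position relative to $g$.

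Next, by the birational invariance of asymptotic multiplier ideals together with Kollár's local vanishing, the map
\[ H^{i}(X,\mathcal{O}_{X}(K_{X} + L) \otimes \mathcal{J}(\Vert L \Vert)) \to H^{i}(X,\mathcal{O}_{X}(K_{X} + L + D) \otimes \mathcal{J}(\Vert L \Vert)) \]
is canonically identified with the corresponding map on $W$ for $\mu^{*}L$ and $\mu^{*}D$. Because the asymptotic multiplier ideal is unchanged upon replacing $\mu^{*}L$ by $P_{\sigma}(\mu^{*}L)$ (a standard property of $\mathcal{J}(\Vert \cdot \Vert)$ combined with Theorem \ref{restnumdimproperties}), and because $P_{\sigma}(\mu^{*}L) \sim_{\mathbb{Q}} P_{\sigma}(g^{*}B)$, one can rewrite the relevant cohomology groups in terms of the divisor $g^{*}B$ up to absorbing the effective correction $N_{\sigma}(\mu^{*}L) - N_{\sigma}(g^{*}B)$, which is supported in $\mathbf{B}_{-}(\mu^{*}L)$ and hence lies inside the multiplier ideal.

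At this point the geometry on $W$ looks like the pullback of a big divisor on $T$, so I would push the statement down to $T$ via the Leray spectral sequence for $g$ and apply the classical Kollár--Ambro--Fujino injectivity theorem for big divisors on $T$: since $B$ is big, $H^{p}(T, \mathcal{O}_{T}(K_{T} + B) \otimes \mathcal{J}(\Vert B \Vert) \otimes R^{q}g_{*}\omega_{W/T}) = 0$ for $p > 0$ by asymptotic Nadel vanishing, reducing the injectivity on $W$ to injectivity of multiplication by the image of $\mu^{*}D$ on each $R^{q}g_{*}$ term on $T$. The hypotheses on $D$ translate on $T$ to: the horizontal part of $\mu^{*}D$ pushes down to a divisor meeting the complement of $\mathbf{B}_{-}(B)$ (since $\Supp(D) \not\subset \mathbf{B}_{-}(L)$), while pseudo-effectivity of $L - \epsilon D$ ensures this push-forward is numerically dominated by $B$, so the big-divisor injectivity applies; vertical components are handled by the generic vanishing of $R^{q}g_{*}\omega_{W/T}$ and the choice of resolution.

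The main obstacle will be carefully matching up the asymptotic multiplier ideal $\mathcal{J}(\Vert \mu^{*}L \Vert)$ on $W$ with the pulled-back multiplier ideal of $B$ on $T$, accounting for the difference $N_{\sigma}(\mu^{*}L) - g^{*}N_{\sigma}(B)$ and any $g$-vertical divisorial contributions. This is the step that genuinely uses the abundance hypothesis beyond the existence of the fibration, and where the precise form of Nakayama's fibration theorem (Theorem \ref{relativesigmadimension0}) is essential to ensure the two ideals differ by an effective divisor supported in $\mathbf{B}_{-}(\mu^{*}L)$.
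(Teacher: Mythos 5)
Your overall strategy (reduce to the big divisor $B$ on the base $T$ via Theorem \ref{abundantequivalence}(4)) is in the same spirit as the paper's, but the execution diverges in a way that leaves a genuine gap. The paper's proof applies Kollár's injectivity theorem (Theorem \ref{kollarinjectivity}) directly on a resolution $Y \to T$ of the Iitaka fibration, taking $M = \phi^{*}L - \lfloor \tfrac{1}{p}F_{p}\rfloor$, and the only delicate point is verifying the hypothesis of that theorem that $\Supp(\phi^{*}D)$ does \emph{not} dominate $T$. This is exactly where abundance is used: since $\nu(\phi^{*}L|_{G})=0$ on a general fiber $G$ and $\phi^{*}(L-\epsilon D)|_{G}$ is pseudo-effective, one gets $\phi^{*}D|_{G}\leq N_{\sigma}(\phi^{*}L)|_{G}$, and combined with Corollary \ref{abundantrelativelytrivial} and the assumption $\Supp(D)\not\subset \mathbf{B}_{-}(L)$ this forces $\phi^{*}D$ to be entirely $g$-vertical. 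Your proposal never establishes this verticality; instead you try to ``push the horizontal part of $\mu^{*}D$ down to $T$'' and treat it there, but multiplication by a divisor with horizontal components does not descend to a well-defined map of $R^{q}g_{*}$-sheaves on $T$, so the Leray-based reduction you sketch does not produce a statement to which a big-divisor vanishing or injectivity on $T$ could apply. In short, the crux of the argument is proving that $D$ is vertical over the base, not rewriting the multiplier ideal in terms of $B$.

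There is a secondary issue with the step where you ``absorb the effective correction $N_{\sigma}(\mu^{*}L)-N_{\sigma}(g^{*}B)$'' into the multiplier ideal: $\mathcal{J}(\Vert \mu^{*}L \Vert)$ is not simply $g^{*}\mathcal{J}(\Vert B\Vert)$ twisted by a fixed effective divisor, and the identification you would need is nontrivial. The paper sidesteps this entirely by working with the linear-series decomposition $\phi^{*}|pL|=|W_{p}|+F_{p}$ coming from the Iitaka fibration, which gives exactly the numerical form required by Theorem \ref{kollarinjectivity} ($M\equiv N+\Delta$ with $N$ a pullback of a big nef divisor) without ever comparing asymptotic multiplier ideals across the fibration. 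I would suggest abandoning the Leray reduction and instead (a) verify that $\phi^{*}D$ has no $g$-horizontal component using abundance as above, then (b) invoke Theorem \ref{kollarinjectivity} directly and push the resulting injection down to $X$ by Kollár local vanishing.
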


The main point is that the abundance of $L$ allows us to set numerical conditions on $D$.
The proof is a straightforward application of the following theorem.

\begin{thrm}[\cite{kollar95},Theorem 10.13] \label{kollarinjectivity}
Let $f: X \to Z$ be a surjective map with $X$ smooth and $Z$ normal.  Suppose that $M$ is a $\mathbb{Z}$-divisor such that $M \equiv N + \Delta$ where $N$ is the pullback of a big and nef $\mathbb{Q}$-divisor on $Z$ and $\Delta$ is a $\mathbb{Q}$-divisor with simple normal crossing support such that $\lfloor \Delta \rfloor=0$.  Let $D$ be a $\mathbb{Z}$-divisor with $D \geq 0$ such that $\Supp(D)$ does not dominate $Z$ under $f$.  Then the map
\begin{equation*}
H^{i}(X,\mathcal{O}_{X}(K_{X} + M)) \to H^{i}(X,\mathcal{O}_{X}(K_{X} + M + D))
\end{equation*}
determined by $D$ is injective for every $i>0$.
\end{thrm}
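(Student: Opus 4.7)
The plan is to follow the standard Esnault--Viehweg--Koll\'ar route to injectivity theorems: reduce the statement to a Hodge-theoretic vanishing on a cyclic cover via Kodaira's lemma and the covering trick, and then descend to $X$ by extracting a Galois eigenspace.

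First, because $\Supp(D)$ does not dominate $Z$ under $f$, I may choose an effective $\mathbb{Z}$-divisor $T$ on $Z$ with $\Supp(f^{*}T) \supset \Supp(D)$. After a log resolution I may assume $\Supp(\Delta) \cup \Supp(D) \cup \Supp(f^{*}T)$ is simple normal crossings. Since the $\mathbb{Q}$-divisor $N'$ on $Z$ with $N = f^{*}N'$ is big and nef, Kodaira's lemma yields $N' \sim_{\mathbb{Q}} A' + E'$ with $A'$ an ample $\mathbb{Q}$-divisor and $E'$ effective. Pulling back and absorbing $\epsilon f^{*}E'$ into the boundary for small rational $\epsilon > 0$ gives a new decomposition $M \equiv f^{*}A + \Delta'$ with $A = (1-\epsilon)A'$ still ample on $Z$ and $\lfloor \Delta' \rfloor = 0$. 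The numerical equivalence $M \equiv f^{*}A + \Delta'$ can be upgraded to $\mathbb{Q}$-linear equivalence either by twisting by the torsion line bundle arising from $M - f^{*}A - \Delta'$ or by passing to a suitable \'etale cover.

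Next, choose $m$ large and divisible so that $m\Delta'$ and $mA$ are $\mathbb{Z}$-divisors and so that a general element $H \in |mf^{*}A + m\Delta'|$ is smooth and forms SNC with the existing boundary. Construct the cyclic $m$-fold cover $\pi: Y \to X$ branched along $H$, followed by a log resolution of the resulting (mildly singular) cover. A standard calculation identifies $\omega_{X}(M)$ as a Galois eigenspace summand of $\pi_{*}\omega_{Y}(\pi^{*}\lceil \Delta' \rceil - \lceil \Delta' \rceil)$, with the corresponding piece on the cover being $\omega_{Y}$ tensored with an ample line bundle (pulled back from $f^{*}A$). Moreover $\Supp(\pi^{*}D)$ still does not dominate $Z$ under $f \circ \pi$.

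The critical step is then to invoke Koll\'ar's classical projective injectivity theorem on $Y$: multiplication by the defining section of $\pi^{*}D$ induces an injection on the relevant cohomology because this section does not vanish identically on a general fiber of $f \circ \pi$, so the Hodge-theoretic nonvanishing hypothesis is satisfied (this is where $E_{1}$-degeneration of the logarithmic Hodge-to-de Rham spectral sequence on $Y$ enters, or equivalently the torsion-freeness of $R^{i}(f\circ\pi)_{*}\omega_{Y}$). Taking the distinguished Galois eigenspace of this injection recovers the desired injection $H^{i}(X,\mathcal{O}_{X}(K_{X}+M)) \hookrightarrow H^{i}(X,\mathcal{O}_{X}(K_{X}+M+D))$. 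The main obstacle I anticipate is the bookkeeping in the covering construction: one must verify that after pullback to $Y$ all the fractional boundary data becomes integral, that the SNC condition is preserved, and that the ample $\mathbb{Q}$-divisor $f^{*}A$ pulls back to something that genuinely satisfies the positivity hypothesis of Koll\'ar's injectivity on $Y$; none of these is deep, but each requires careful tracking of coefficients through the branched cover.
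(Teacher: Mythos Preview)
The paper does not prove this statement at all: Theorem~\ref{kollarinjectivity} is simply quoted from \cite{kollar95}, Theorem~10.13, and then invoked as a black box in the proof of Proposition~\ref{abundantinjectivity}. There is therefore no proof in the paper to compare your proposal against.

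As for the proposal itself, the outline is the standard Esnault--Viehweg--Koll\'ar strategy and is broadly on track, but two points deserve care. First, your handling of the numerical-versus-$\mathbb{Q}$-linear equivalence is loose: twisting by a numerically trivial line bundle or passing to an \'etale cover changes the cohomology groups you are trying to compare, so you need to check that the injectivity you prove on the cover or twist really descends to the original statement on $X$ (in Koll\'ar's book this is handled by working with the actual linear series $|mM|$ rather than a numerical stand-in). Second, when you say you will ``invoke Koll\'ar's classical projective injectivity theorem on $Y$,'' be explicit about which statement you mean: you are presumably reducing to the boundary-free case where the input is $\omega_{Y}$ twisted by a semiample line bundle, and the injectivity there comes from $E_{1}$-degeneration or torsion-freeness of higher direct images. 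That is fine, but as written it reads dangerously close to assuming a version of the theorem you are proving.
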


\begin{proof}[Proof of Proposition \ref{abundantinjectivity}:]
Choose $p$ sufficiently divisible so that $\mathcal{J}(\Vert L \Vert) = \mathcal{J}(\frac{1}{p} |pL|)$ and the map $\phi_{|pL|}: X \dashrightarrow T$ defined by the linear series $|pL|$ is birationally equivalent to the Iitaka fibration.  Let $\phi: Y \to X$ be a resolution of $\phi_{|pL|}$ so that $\phi^{*}|pL| = |W_{p}| + F_{p}$ for some basepoint free linear series $|W_{p}|$ and effective divisor $F_{p}$.  Let $g: Y \to T$ denote the resolved map and choose a general divisor $N \in |W_{p}|$.  Thus
\begin{equation*}
\phi^{*}L - \left \lfloor \frac{1}{p}F_{p} \right \rfloor \equiv \frac{1}{p}N + \Delta
\end{equation*}
 where $N$ is the pull-back of an ample divisor on $T$ and $\Delta$ has fractional coefficients.

The important point is to show that $\Supp(\phi^{*}D)$ does not dominate $T$ under $g$.  By assumption the restriction of $\phi^{*}(L - \epsilon D)$ to a general fiber $G$ of $g$ is pseudo-effective.   Theorem \ref{abundantequivalence} implies that $\nu(\phi^{*}L|_{G}) = 0$ for a general fiber $G$ of $g$.  In particular $P_{\sigma}(\phi^{*}L)|_{G} \equiv 0$.  Since $\nu(\phi^{*}L|_{G}) = 0$ and $\phi^{*}D|_{G}$ is effective, this can only happen if $\phi^{*}D|_{G} \leq N_{\sigma}(\phi^{*}L)|_{G}$.

If $\phi^{*}D$ had a $g$-horizontal component, the support of this component would lie in $N_{\sigma}(\phi^{*}L)$.  But this contradicts our assumption on $\Supp(D)$.  More precisely, note that by Corollary \ref{abundantrelativelytrivial} the subset $\Supp(D) \cap \mathbf{B}_{-}(L)$ does not dominate $Z$ under $f$.  Since $f$ is birationally equivalent to $g$, $\Supp(\phi^{*}D) \cap \mathbf{B}_{-}(\phi^{*}L)$ does not dominate $T$ under $g$.  We conclude that $\phi^{*}D$ has no $g$-horizontal components.

Applying Theorem \ref{kollarinjectivity} yields an injection
\begin{equation*}
H^{i}(Y,\mathcal{O}_{Y}(K_{Y} + \phi^{*}L) \otimes \mathcal{J}(\Vert \phi^{*}L \Vert)) \to H^{i}(Y,\mathcal{O}_{Y}(K_{Y} + \phi^{*}L + \phi^{*}D) \otimes \mathcal{J}(\Vert \phi^{*}L \Vert))
\end{equation*}
Pushing down to $X$, we have a map on cohomology
\begin{equation*}
H^{i}(X,\mathcal{O}_{X}(K_{X} + L) \otimes \mathcal{J}(\Vert L \Vert)) \to H^{i}(X,\mathcal{O}_{X}(K_{X} + L + D) \otimes \mathcal{J}(\Vert L \Vert))
\end{equation*}
The spectral sequence relating these cohomologies stabilizes at the first step by local vanishing (see \cite{lazarsfeld04} 9.4.1).  Thus, the map of cohomology on $X$ is still an injection.
\end{proof}

\nocite{*}
\bibliographystyle{amsalpha}
\bibliography{psefreduction}

\end{document}